\documentclass[A4,10pt]{article}
\usepackage[margin=1in,dvips]{geometry}

\pdfoutput=1 
\usepackage[dvipdfmx]{graphicx}

\usepackage{color, soul}
\usepackage{amsmath, amsthm, slashed}
\usepackage{amssymb}

\usepackage{marvosym}
\usepackage{rotating}
\usepackage{mathrsfs}
\usepackage{upgreek}

\usepackage{verbatim}
\usepackage[affil-it]{authblk}
\usepackage{rotating}
\usepackage{accents}
\usepackage{harmony, slashed}
\usepackage{tikz}
\usepackage{dsfont}
\usepackage{makeidx}
\usepackage{bbm}

\usepackage{tocloft}

\usepackage{thmtools}
\usepackage{thm-restate}

\usepackage{comment}

\usepackage{enumerate}

\usepackage[colorlinks=true,linkcolor=black,citecolor=blue]{hyperref}

\usepackage{cleveref}

\newtheorem{theorem}{Theorem}[section]

\newtheorem*{conjecture*}{Conjecture}
\newtheorem*{theorem*}{Theorem}

\newtheorem{proposition}{Proposition}[subsection]
\newtheorem{definition}[proposition]{Definition}
\newtheorem{corollary}[proposition]{Corollary}
\newtheorem*{corollary*}{Corollary}
\newtheorem{lemma}[proposition]{Lemma}
\newtheorem{remark}[proposition]{Remark}

\numberwithin{equation}{section}

\newcommand{\Lie}{{\mathcal{L}}}

\newcommand{\der}{\nabla}

\newcommand{\les}{\lesssim}
\newcommand{\bea}{\begin{eqnarray}}
\newcommand{\eea}{\end{eqnarray}}

\newcommand{\derm}{ { \der^{(\bf{m})}} }
\newcommand{\rderm}{   {\mbox{$\nabla \mkern-13mu /$\,}^{(\bf{m})} }   }

\newcommand{\rpa}{\mbox{${\pa} \mkern-09mu /$}}

\newcommand{\R}{{\mathbb R}}

\newcommand{\la}{\langle}\renewcommand{\b}{\beta}

\def\a{\alpha}\def\ga{\gamma}\def\de{\delta}
\def\bm{\left( \begin{array}{cc}}
\def\endm{\end{array}\right)}\newcommand{\eq}{\end{equation}}
\def\a{\alpha}\def\b{\beta}
\def\ga{\gamma}\def\de{\delta}\def\pa{\partial}

\def \rectangle#1#2{\hbox{\vrule\vbox to #2 {\hrule\hbox to #1{\hfil}\vfil\hrule}\vrule}}

\def\a{\alpha}\def\b{\beta}\def\ga{\gamma}
\def\de{\delta}\def\pa{\partial}

\def\pa{\partial}

\def\beaa{\begin{eqnarray*}}
\def\eeaa{\end{eqnarray*}}
\def\pa{\partial}
\def\a{{\alpha}}
\def\b{{\beta}}
\def\ga{\gamma}

\def\de{\delta}

\def\la{\lambda}

\def\Om{\Omega}

\def\g{{\bf g}}

\def\SSS{{\Bbb S}}

\def\R{{\mathbb R}}

\def\12{\frac{1}{2}}

\def\bep{\begin{proposition}}
\def\eep{\end{proposition}}

\def\4{\frac{1}{4}}

\def\12{\frac{1}{2}}

\def\bep{\begin{proposition}}
\def\eep{\end{proposition}}

\def\bm#1{\boldsymbol{#1}}

\def\build#1_#2^#3{\mathrel{\mathop{\kern 0pt#1}\limits_{#2}^{#3}}}
\def\4{\frac{1}{4}}

\def\<{\langle}
\def\>{\rangle}

\DeclareMathAlphabet\mathbfcal{OMS}{cmsy}{b}{n}

\makeatletter \@addtoreset{equation}{section}  \makeatother

\title{Decoupled energy estimates for tensorial non-linear wave equations\\ and applications}

\author{Sari Ghanem}
\date{}

\begin{document}

\maketitle

\begin{abstract}

We prove energy estimates for solutions to a \textit{tensorial} system of coupled non-linear wave equations, in a way that is suitable to deal with the structure of the \textit{non-linearity} that arises from the Einstein-Yang-Mills system in the Lorenz gauge as well as with other \textit{new different} non-linearities.  We establish suitable bounds on the $L^2$-norm of \textit{each component in a frame decomposition} of the \textit{tensorial} solutions, in way that does \textit{not} involve all the other components of the tensor, which would allow us to \textit{decouple} the higher order energy estimates for certain components from the other components. We achieve this partly by exploiting the tensorial structure of the coupled non-linear wave equations, where the background metric that is \`a priori unknown, is a perturbation of the Minkowski space-time in a certain fixed system of coordinates, and by exploiting the structure of the \textit{commutator term} for the Lie derivatives of the solutions. These decoupled  energy estimates for each component of the tensor in a frame, are new and motivated by a problem that we address in a subsequent paper to prove the exterior non-linear stability of the $(1+3)$-Minkowski space-time governed by a general class of perturbations, that includes the non-linearities that arise from the Einstein-Yang-Mills system in the \textit{Lorenz gauge} as well as other \textit{new} non-linearities, which have a \textit{different} non-linear structure than the one treated by Lindblad-Rodnianski, for which their seminal $L^\infty$-estimate does \textit{not} work to the best of our knowledge. The \textit{decoupled energy bounds on each component in a frame} derived here allow us to replace the celebrated $L^\infty$-estimate of Lindblad-Rodnianski in a novel way that permits us to treat these new non-linear structures.

\end{abstract}

\setcounter{page}{1}
\pagenumbering{arabic}

\section{Introduction}\label{Introduction}\

We prove energy estimates for \textit{each component in a frame decomposition} of the \textit{tensorial} solutions to a coupled system of non-linear wave equations, \textit{without} involving all the other components. These \textit{energy estimates} on the $L^2$-norm of the higher order energy are made and tailored to prove the non-linear stability of the $(1+3)$-Minkowski space-time governed by a general class of non-linear perturbations, that includes the non-linearities that arise from the fully coupled non-linear Einstein-Yang-Mills system in the \textit{Lorenz gauge} and in wave coordinates, as well as other \textit{new different} non-linearities, which is a system of non-linear wave equations that does \textit{not} satisfy the null condition of Christodoulou, \cite{Chr1}, and of Klainerman, \cite{Kl2}, and has novel non-linearities that are \textit{different} than the one treated by Lindblad and Rodnianski, \cite{LR10}, that pose their own challenges and serious complications.

In particular, to the best of our knowledge, the $L^\infty$-estimate derived by Lindblad-Rodnianski and used in their seminal work, \cite{LR10}, does \textit{not} work for the Einstein equations coupled to the non-linear Yang-Mills fields in the \textit{Lorenz gauge}. We derive \textit{energy estimates on each component} of the tensorial solutions, \textit{decoupled from the other components}, to replace the $L^\infty$-estimate of Lindblad-Rodnianski in a way that allows us to include new non-linearities such as the one that arises from the Yang-Mills fields in the Lorenz gauge, as well as other different non-linearities. 

In this paper, we overcome the difficulty regarding the \textit{decoupling} of the \textit{higher order energy} estimates, by proving suitable bounds on \textit{each component in a frame} decomposition of the \textit{tensorial} fields solutions to the dynamically coupled system of non-linear wave equations. This is partly achieved by establishing a \textit{new} estimate on the \textit{commutator term} and by establishing an approach that is totally \textit{covariant} where we exploit the tensorial structure, despite the fact that we fix the system of coordinates. We obtain \textit{decoupled} bounds on the weighted $L^2$ norm of \textit{each component in a frame}, of the derivative of the dynamically perturbed space-time and of the \textit{tensorial} fields that are solutions to the coupled system of non-linear wave equations, and we also control a space-time integral of the tangential derivatives of \textit{each component} of the tensorial solutions in a frame decomposition.

Energy estimates on curved backgrounds form a fundamental tool to prove dispersive estimates for non-linear wave equations, see for example \cite{Alin2}, \cite{AB15}, \cite{AAG}, \cite{AAG2}, \cite{Chr1}, \cite{DR}, \cite{DR1}, \cite{DR2}, \cite{DHR}, \cite{H2}, \cite{H3}, \cite{Keir1}, \cite{Keir2} and \cite{Kl1}. These have been also used to prove decay for the Maxwell fields, see for example \cite{Alin2}, \cite{AB15_02}, \cite{BCY}, \cite{G2}, \cite{Loiz1}, \cite{Ma}, \cite{MTT}, \cite{Pas1}, \cite{Speck}, \cite{ST15}, \cite{Z} and \cite{Z2}.

However, in the non-abelian case of the Yang-Mills equations, although energy estimates were used to prove global existence of solutions, see for example \cite{CB-Chri}, \cite{CS}, \cite{EM1}, \cite{EM2}, \cite{G1} and \cite{Tao}, to the best of our knowledge, energy estimates were not successfully carried out to fully prove decay for the Yang-Mills fields on \textit{curved} backgrounds, except in \cite{G3} and thereafter in \cite{G6}-\cite{G7}. There is also a result of decay on curved background in \cite{MY1}-\cite{MY3}, however without estimates on the Yang-Mills \textit{potential}. In \cite{G4}, there is a result in higher dimensions and in \cite{G5}, there are \textit{separate energy estimates} for three space dimensions. Concerning a result of decay on the \textit{flat} Minkowski space-time, see \cite{WYY}. In \cite{AthMonYau}, there is a large data result for formation of trapped surfaces.

The difficulty comes from the fact that the non-linearity that arises from the Yang-Mills structure gives rise to stationary solutions, which do not decay, which need to be eliminated in any argument of proof of decay for the Yang-Mills fields, see the discussion in \cite{G2}. See also the important discussions in  \cite{Kadar}, \cite{Keir1}, \cite{Keir2} and \cite{LOY}, about the trouble that can be caused by non-linear structures for quasilinear wave equations.

It is precisely this difficulty that also makes the problem of stability of Minkowski space-time for the Einstein-Yang-Mills equations difficult and non-studied. Global stability of the Minkowski space-time in vacuum was teated in \cite{Bieri2}, \cite{Bieri3}, \cite{Bieri}, \cite{C-K}, \cite{F}, \cite{Graf}, \cite{Hintz}, \cite{HV}, \cite{Hu1}, \cite{LR10}, \cite{Shen1}, \cite{Shen2}, \cite{Shen3}, see also \cite{CB1}, \cite{DR2}, \cite{LR2}. Although these global stability theorems were extended to include linear matter such as the Maxwell fields -- see \cite{BCY}, \cite{Loiz1}, \cite{Speck}, \cite{Z}, \cite{Z2} --, they were not carried out to the non-linear case of the Yang-Mills fields.

In the seminal work of Lindblad-Rodnianski \cite{LR10}, they proved the non-linear stability of Minkowski space-time in wave coordinates. Their celebrated work was new and proved a result that was not expected to hold true in the harmonic gauge, by exploiting the weak-null structure of the equations in \cite{LR1}. Their original approach was based on an $L^\infty$-estimate. However, as far as our knowledge is concerned, the celebrated $L^\infty$-estimate derived by Lindblad-Rodnianski does \textit{not} work for the Einstein equations coupled to the non-linear Yang-Mills fields in the \textit{Lorenz gauge}. Hence, in order to be able to include new non-linearities such as the one that arises from the \textit{non-linear} Yang-Mills fields in the \textit{Lorenz gauge}, it is important to provide a \textit{new alternative approach} that replaces the $L^\infty$-estimate of Lindblad-Rodnianski. The difficulty of decoupling the energy estimates for each component was encountered by Lindblad-Rodnianski in \cite{LR2}-\cite{LR10} and also by Lindblad-Tohaneanu in \cite{Lind-Toh} and was \textit{not} resolved there.

In this paper, we resolve the problem by deriving the desired \textit{decoupled} energy estimates that allows us to replace their $L^\infty$-estimate, used in \cite{LR10}, in a way that allows us to include new non-linearities, such as the Yang-Mills fields in the \textit{Lorenz gauge}, for which their $L^\infty$-estimate does \textit{not} work to the best of our knowledge. In other words, we establish suitable bounds on \textit{each component in a frame decomposition} of the Lie derivatives of the tensorial solutions of the coupled system of non-linear wave equations, without involving the other components, up to some good factor that enters infront of the full components. We achieve this \textit{decoupling} of the energy estimates for each component partly by establishing a new suitable estimate on the \textit{commutator term} of the higher order energy norm.

The novelty here is that we give a new idea and different approach to treat the non-linear structure of \textit{tensorial quasilinear} wave equations by \textit{decoupling} the energy estimates for each component of the tensorial solutions in a frame decomposition. We provide suitable energy estimates that only involve \textit{each component in a frame}. This allows us to replace $L^\infty$-estimate of Lindblad and Rodnianski in \cite{LR10}, in a way that allows us to include \textit{new} non-linear structures for which their $L^\infty$-estimate does not work, to the best of our knowledge. In \cite{G6}-\cite{G7}, we use the decoupling of the energy estimates to prove the non-linear stability of $(1+3)$-Minkowski governed by the fully non-lineally coupled Einstein-Yang-Mills system in the Lorenz gauge. A key point in our approach, is that despite the fact that we work in a fixed system of coordinates, that will ultimately be chosen to be wave coordinates, we build a framework that is totally covariant and we exploit the tensorial structure of the commutator term, where some components can be decoupled from others up to some good factor.

More precisely, we consider that we are given a fixed system of coordinates, namely $(t, x^1, x^2, x^3)$\;, that is not necessarily wave coordinates, yet for our application to the proof of stability of Minkowski, this system will ultimately be chosen to be that of wave coordinates. In this fixed system of coordinates, we define $m$ to be the Minkowski metric $(-1, +1, +1, +1)$ and we define $\derm$ to be the covariant derivative associated to the metric $m$ (see Definition \eqref{definitionofMinkowskiandcovariantderivtaiveofMinkwoforafixedgivensystemofcoordinates}). We consider any arbitrary curved space-time $({\cal M}, \g) $\;, \`a priori unknown, with a smooth Lorentzian metric $\g$\;, which will ultimately be, in our application to the non-linear exterior stability problem, our unknown Lorentzian manifold solution to the fully coupled Einstein-Yang-Mills system.

We study the following system of non-linear covariant tensorial wave equations for $\Phi$ on the \`a priori unknown $({\cal M}, \g) $\;, where the initial data for the hyperbolic Cauchy problem is given on an initial Cauchy hypersurface $\Sigma$\;, and where $U\,, V$ are any vectors,
\bea\label{nonlinearsystemoftensorialwaveequations}
\begin{cases} g^{\a\b} \derm_{\a } \derm_\b  \Phi_{V} = S^{1}_{V} \; ,\\
g^{\a\b} \derm_{\a } \derm_\b  g_{UV} = S^{2}_{UV} \; .\end{cases}
\eea
Here $\Phi$ is chosen for simplicity of illustration to be a one-tensor but our argument works for any tensor of arbitrary order. In our generic energy estimates derived in this paper, the source terms $S^1$ and $S^2$ are arbitrary tensors that present some non-linearity. However, in our application to the non-linear stability of Einstein-Yang-Mills in the Lorenz gauge, they will ultimately be chosen to be the non-linearities that arise from the Einstein-Yang-Mills equations, and they will depend on $\Phi$\,, $\g$\,, $\derm \Phi$\,, $\derm \g$\,. Some components of the tensors, depending on choices for the vectors $U,\, V$ will satisfy a better tensorial wave equation with better non-linearity. Our goal is to decouple the energy estimates for these components, so that we can establish good bounds on the components that satisfy a good wave equation and use these bounds to control the full system.

In particular, to see the structure of the non-linearity arising from the Yang-Mills fields in the Lorenz gauge, we construct first the following frame using the fixed system of coordinates $(t, x^1, x^2, x^3)$\,.
\begin{definition}\label{definitionofthenullframusingwavecoordinates}
At a point $p$ in the space-time, let
\bea
L &=&  \pa_{t} + \pa_{r} \, =   \pa_{t} +  \frac{x^{i}}{r} \pa_{i} \, , \\
\underline{L} &=&  \pa_{t} - \pa_{r} =  \pa_{t} -  \frac{x^{i}}{r} \pa_{i} \, ,
\eea
and let $\{e_1, e_{2} \}$ be an orthonormal frame on $\SSS^{2}$. 
We define the sets 
\bea
\cal T&=& \{ L,e_1, e_{2}\} \, ,\\
 \cal U&=&\{ \underline{L}, L, e_1, e_{2}\} \, .
\eea
We call the full set ${ \cal U}$ a null-frame. We call the set ${ \cal T}$\, the tangential frame. 
\end{definition}
We also need the following definitions in order to exhibit the non-linear structure of the Einstein-Yang-Mills system in the Lorenz gauge. 
\begin{definition}\label{definitionofthetangentialdefivatives}
We define $\rderm$ as a derivative along any vector $V \in {\cal T }$\,, i.e.
\bea
\rderm := \derm_{V}  \;, \quad \text{with $V \in {\cal{T} }$} \,.
\eea
\end{definition}

\begin{definition}
In our fixed system of coordinates $(t, x^1, x^2, x^3)$\,, we define
\beaa
h_{\mu\nu} := g_{\mu\nu} - m_{\mu\nu} 
\eeaa 
\end{definition}

Now, if we choose that our fixed system of coordinates $(t, x^1, x^2, x^3)$ is \textit{wave coordinates}, then using the null-frame tetrad $\{\, \underline{L}\,, L\,, e_a\,, a \in \{1, 2 \}\, \}$\;, as we showed \cite{G4}, the Einstein-Yang-Mills system in the Lorenz and harmonic gauge is equivalent to the following coupled system of non-linear wave equations on the Einstein-Yang-Mills potential $A$ and on the perturbation metric $g$\,, that that is schematically speaking the following:

\bea\label{waveequationonbadtermfortheEinsteinYangMillspoential}
 \begin{cases}   \notag
   g^{\la\mu} \derm_{\la}   \derm_{\mu}   A_{{\underline{L}}}     =& \derm h  \cdot  \rderm A        + \rderm  h \cdot  \derm A  +    A   \cdot     \rderm A   +  \derm  h  \cdot  A^2   +  A^3 \\
    \notag
  & + O( h \cdot  \derm h \cdot  \derm A) + O( h \cdot  A \cdot \derm A)  + O( h \cdot  \derm h \cdot  A^2) + O( h \cdot  A^3) \\
 & +  \underbrace{A_L   \cdot   \derm A      +  A_{e_a}  \cdot     \derm A_{e_a} }_{\text{bad terms}}    \; , \\
\notag
   g^{\alpha\beta} \derm_\alpha \derm_\beta g_{{\underline{L}} {\underline{L}} }        =&   \rderm h \cdot \derm h +  h \cdot ( \derm h )^2      +   \rderm A    \cdot  \derm  A   +  A^2  \cdot  \derm A     +  A^4 \\
    \notag
     & + O \big(h \cdot  (\derm A)^2 \big)   + O \big(  h  \cdot  A^2 \cdot \derm A \big)     + O \big(  h   \cdot  A^4 \big)   \\
 & + (  \derm h_{\cal TU} ) ^2 +   ( \derm A_{e_{a}} )^2     \; ,\end{cases}
\eea
and for the “good” components of $A$ and $g$\,, we have the “better” structure that is
  \bea
   \begin{cases} 
   \notag
 g^{\la\mu} \derm_{\la}   \derm_{\mu}   A_{{\cal T}}       =&   \derm h  \cdot  \rderm A      +  \rderm  h  \cdot  \derm A   +    A   \cdot   \rderm A  +  \derm  h  \cdot  A^2   +  A^3 \\
 \notag
  & + O( h \cdot  \derm h \cdot  \derm A) + O( h \cdot  A \cdot \derm A)  + O( h \cdot  \derm h \cdot  A^2) + O( h \cdot  A^3)  \; , \\
\notag
    g^{\alpha\beta} \derm_\alpha \derm_\beta g_{ {\cal T} {\cal U}}      =&  \rderm h \cdot \derm h + h \cdot (\derm h)^2      +  \rderm A   \cdot  \derm  A   +    A^2 \cdot \derm A     + A^4  \\
     & + O \big(h \cdot  (\derm A)^2 \big)   + O \big(  h  \cdot  A^2 \cdot \derm A \big)     + O \big(  h   \cdot  A^4 \big)    \; .,\end{cases}
\eea

We see in the troublesome wave equation in \eqref{waveequationonbadtermfortheEinsteinYangMillspoential} on $A_{{\underline{L}}}$\;, that there exist new “bad” terms which are $ A_{e_a}  \cdot     \derm A_{e_a} $\;, where $a \in \{1, 2 \}$\,, and $ A_L   \cdot   \derm A    $\;.

To the best of our knowledge, in the seminal work of Lindblad-Rodnianski \cite{LR10}, their $L^\infty$-estimate does not work to treat these \textit{new terms} of the type $A \cdot \pa A $\,, in contrast to terms of the type $(\pa A)^2$ which were dealt with in \cite{LR10}.

\begin{remark}
The wave equation on the metric that one should actually consider is not that on $\g$\, but rather on $\g - h^0 $\,, where $h^0$ is the spherically symmetric Schwarzschildian part that has infinite energy. However, we still wrote the wave equation on $\g$ because is not our point in this discussion, rather our point is to exhibit the new non-linearity arising from the Yang-Mills potential.
\end{remark}

For clarity, in the coupled system above, the big $O$ notation is defined as in the following Definition.
\begin{definition}\label{definitionofbigOonlyforAandhadgradientofAandgardientofh}
Let $K$ be a tensor that is either $A$ or $h$\;, or $\derm A$\;, or $\derm h$\;. Let $ P_n (K )$ be tensors that are polynomials of degree $n$\;, and $Q_1 (K)$ a tensor that is a Polynomial of degree $1$ such that $Q_1 (0) = 0$ and $Q_1 \neq 0$\;, of which the coefficients are components of the metric $\textbf m$ and of the inverse metric $\textbf m^{-1}$, and of which the variables are components of the covariant tensor $K$, leaving some indices free, so that the following product gives a tensor that we define as,
\bea
O_{\mu_{1} \ldots \mu_{k} } (K  ) &:=& Q_1 ( K) \cdot \Big( \sum_{n=0}^{\infty} P_n ( K ) \Big) \; .
\eea
For a family of tensors $K^{(1)}, \ldots,  K^{(m)}$, where each tensor $K^{(l)}$ is again either $A$\,, or $h$\;, or $\derm A$\;, or $\derm h$\;, we define
\bea
O_{\mu_{1} \ldots \mu_{k} } (K^{(1)} \cdot \ldots \cdot K^{(m)} ) &:=& \prod_{l=1}^{m} Q_{1}^{l} ( K^{(l)} ) \cdot \Big( \sum_{n=0}^{\infty} P_{n}^{l}  ( K^{(l)} ) \Big) \; .
\eea
where again $P_n^{l} (K^{(l)} )$ and $Q_1^l (K^{(l)})$, are tensors that are polynomials of degree $n$ and $1$, respectively, with $Q_1 (0) = 0$ and $Q_1 \neq 0$\;, of which the coefficients are components of the metric $\textbf m$ and of the inverse metric $\textbf m^{-1}$, and of which the variables are components of the covariant tensor $K^{l}$, leaving some indices free, so that at the end the whole product $\prod_{l=1}^{m} Q_{1}^{l} ( K^{(l)} ) \cdot \Big( \sum_{n=0}^{\infty} P_{n}^{l}  ( K^{(l)} ) \Big)$ gives a tensor which we define as $O_{\mu_{1} \ldots \mu_{k} } (K^{(1)} \cdot \ldots \cdot K^{(m)} )$. To lighten the notation, we shall sometimes drop the indices and just write $O (K^{(1)} \cdot \ldots \cdot K^{(m)} )$\;.

\end{definition}

In this paper, we solve this problem by decoupling the higher order energy estimates for each component in a frame, at the level of the $L^2$-norm of the Lie derivatives of each component in the frame ${\cal U}$ of the fields, without involving all the components, so as to address the non-linearity arising from the Yang-Mills fields in the Lorenz gauge. We achieve this \textit{decoupling} of the energy estimates for each component by establishing a new suitable commutator estimate on the higher order energy norm.

The decoupling of the energy estimates in a frame decomposition, that we establish here, namely Theorem \ref{TheSeparateforHigherOrderEnergyEstimate}, are designed to solve problems concerning the new non-linearity arising from the Yang-Mills fields in the Lorenz gauge, such as  $A_{e_a}  \cdot     \derm A_{e_a} $\, for which the $L^\infty$-estimate of Lindblad-Rodnianski does not work, to the best of our knowledge. For this task, we also address the difficulty that the commutator estimate \eqref{commutatorterm} does \textit{not} decouple, a difficulty that we overcome here among other complications that we treat in \cite{G6}. This allows us in \cite{G6} to give the proof of the exterior stability of the $\R^{1+3}$ Minkowski space-time, as solution to the coupled Einstein-Yang-Mills system in the Lorenz gauge, associated to any compact Lie group $G$\,, and in wave coordinates.

For this, we look at the equation 

\bea
g^{\a\b} \derm_{\a } \derm_\b  \Phi_{V} = S^{1}_{V} \; ,
\eea

The metric $\g$ is a perturbation of Minkowski in the following sense: if we define (see Definition \ref{definitionofbigHandsmallhandrecallofdefinitionofMinkowskimetricminrelationtowavecoordiantesasreminder}),
\bea\label{definitionofbigHusingdefinitionofMinkwoskimetric}
H^{\mu\nu} &:=& g^{\mu\nu}-m^{\mu\nu} \;,
\eea
where $m^{\mu\nu}$ is the inverse of the Minkowski metric $m_{\mu\nu}$\;, that is defined to be $(-1, +1, +1, +1)$ in our chosen system of coordinates $(t, x^1, x^2, x^3)$\;, then we assume in our energy estimates that the unknown metric $\g$ is close to Minkowski metric in the following sense:
\bea\label{boundednessoftheperturbationbigHbyaconstant}
 \sum_{\mu, \nu = 0}^{3}   | H_{\mu\nu} | < \frac{1}{n}\; .
\eea
This smallness condition on the perturbation $H$ would make the boundary terms of our energy estimates “look like” the $L^2$-norm of the covariant gradient $ \derm \Phi_{V}  $ of the solution of our system \eqref{nonlinearsystemoftensorialwaveequations} (see Lemma \ref{howtogetthedesirednormintheexpressionofenergyestimate}). This condition could be used in a bootstrap argument to prove that it is actually and ultimately a true estimate.

The goal of our energy estimates is to prove an exterior estimate for each component $ \derm \Phi_{V}  $\;, where $V \in {\cal{U}}$\,, of the higher order energy norm of the tensorial solutions, without involving all the elements of the frame ${\cal U}$\,. We define a product of Minkowski vector fields as in Definitions  \ref{DefinitionofMinkowskivectorfields} and \ref{DefinitionofZI}. By ``decoupled" we mean that would allow us to control the $L^2$ norm of $\derm  \Lie_{Z^I}  \Phi_{V}$, and not only of $\derm  \Lie_{Z^I}  \Phi$\,. By ``exterior" we mean that the integral would be taken on a hypersurface that is the intersection of $t = constant$\;,  in our fixed system of coordinates $(t, x^1, x^2, x^3)$\;, with the complement of the future domain of dependance for the metric $m$ of a compact ${\cal{K}} \subset \Sigma$\;. This is what we mean by decoupled exterior energy estimate: they are bounds on $L^2$ norm of $\derm  \Lie_{Z^I}  \Phi_{V}$ in domains that are exterior to the future domain of dependance of ${\cal K} \subset \Sigma$\;.

We consider a specific non-symmetric tensor (see Definition \ref{defofthestreessenergymomentumtensorforwaveequationhere}) which we contract with a weighted vector (see \eqref{The weightedvectorproportionaltodt}) to get a weighted conservation law (see Lemma \ref{weightedconservationlawintheexteriorwiththeenergymomuntumtensorcontarctedwithvectordt}). Here the weights are defined in Definitions \ref{defoftheweightw}, \ref{defwidehatw} and \ref{defwidetildew}. The fact that the metric $\g$ is assumed to be close enough to the Minkowski metric (see \eqref{boundednessoftheperturbationbigHbyaconstant}), allows us to translate these conservation laws into \textit{weighted} energy estimates as in Corollary \ref{TheenerhyestimatewithtermsinvolvingHandderivativeofHandwithwandhatw}. Thanks to the definition of our tensor $T$ in Definition \ref{defofthestreessenergymomentumtensorforwaveequationhere}, we get a suitable control on the $L^2$-norm on each component, namely,
  \bea
  \notag
     \int_{\Sigma^{ext}_{t} }  | \Lie_{Z^I}  \Phi_{V}|^2     \cdot w(q)  \cdot d^{n}x   \; ,
  \eea
  
 a control that will be used in a subsequent paper, \cite{G6}, to prove the non-linear stability of the $(1+3)$-Minkowski space-time governed by the coupled Einstein-Yang-Mills system in the Lorenz gauge. Here $q$ is defined as in Definition \ref{definitionoftheparmeterq}.

The fact that we can decouple the controls for the $L^2$-norm on each component $\derm \Lie_{Z^I}  \Phi_{V} $ is necessary to treat the term $ A_{e_a}  \cdot     \derm A_{e_a} $\,. However, to successfully achieve this, we crucially need to deal with the fact that the commutator estimate does not decouple, i.e. with the fact that the estimate on the term
\bea\label{commutatorterm}
 |g^{\a\b} \derm_{\a } \derm_\b \Lie_{Z^I}  \Phi_{V} - \Lie_{Z^I} ( g^{\a\b} \derm_{\a } \derm_\b  \Phi_{V} ) | \; ,
 \eea
contains terms that involve other components different than $\Lie_{Z^I}  \Phi_{V} $. We provide a suitable new commutator estimate in \eqref{Theseperatecommutatortermestimateforthetangentialcomponents}. Indeed, unlike the case of the Einstein vacuum equations, and also unlike the case of the Einstein-Maxwell system, in the case of the Einstein-Yang-Mills equations, we need to have a \textit{decoupled} estimate for \textit{each component} of the following commutator term $$| \Lie_{Z^I}  ( g^{\la\mu} \derm_{\la}   \derm_{\mu}     A_{e_a} ) - g^{\la\mu}    \derm_{\la}   \derm_{\mu}  (  \Lie_{Z^I} A_{e_a}  ) | \; .$$

We solve the problem by establishing the following suitable estimate.

 \bea\label{Theseperatecommutatortermestimateforthetangentialcomponents}
\notag
&&| g^{\la\mu}    \derm_{\la}   \derm_{\mu} \Lie_{Z^I} \Phi_{V}    - \Lie_{Z^I}  ( g^{\la\mu} \derm_{\la}   \derm_{\mu}     \Phi_{V} )  |  \\
  \notag
   &\les&  \sum_{|K| < |I| }  | g^{\la\mu} \cdot \derm_{\la}   \derm_{\mu} (  \Lie_{Z^{K}}  \Phi_{V} ) | \\
   \notag
&&+  \frac{1}{(1+t+|q|)}  \cdot \sum_{|K|\leq |I|,}\,\, \sum_{|J|+(|K|-1)_+\le |I|} \,\,\, | \Lie_{Z^{J}} H |\, \cdot | \derm ( \Lie_{Z^K}  \Phi )  | \\
&& + \underbrace{  \frac{1}{(1+|q|)}}_{\text{bad factor}}  \cdot \sum_{|K|\leq |I|,}\,\, \sum_{|J|+(|K|-1)_+\le |I|} \,\,\, | \Lie_{Z^{J}} H_{L  L} |\, \cdot  \underbrace{\big( \sum_{ V^\prime \in \cal T } | \derm ( \Lie_{Z^K}  \Phi _{V^\prime} )  | \:  \big) }_{\text{decoupled tangential components}} \; ,
\eea
  where $(|K|-1)_+=|K|-1$ if $|K|\geq 1$ and $(|K|-1)_+=0$ if $|K|=0$\,. 
  
  With this \textit{new} estimate \eqref{Theseperatecommutatortermestimateforthetangentialcomponents}, we notice that the terms with the bad factor $\frac{1}{(1+|q|)}$ enter \textit{only} with the tangential components. This is in fact the bad term in the estimate of the commutator term, that is behind an imposed troublesome Grönwall inequality in the proof in \cite{G6}, that needs to be dealt with separately. The estimate used previously in the literature contains all the components for the factor  $\frac{1}{(1+|q|)}$\,, and does \textit{not} allow us to decouple the higher order energy estimates.
  
Instead, we are able to establish this new estimate \eqref{Theseperatecommutatortermestimateforthetangentialcomponents} thanks to the following inequality (see \eqref{estimateforgradientoftensorestaimetedbyLiederivativesoftensorwithrefinedcomponentsforthebadfactor}), 
  
 \bea
 |\derm \Psi_{UV} | \les \sum_{|I| \leq 1}  \frac{1}{(1+t+|q|)} \cdot | \Lie_{Z^I} \Psi |  +  \sum_{U^\prime \in \cal U,  V^\prime \in \cal T } \sum_{|I| \leq 1}  \frac{1}{(1+|q|)} \cdot  | \Lie_{Z^I} \Psi_{U^\prime V^\prime} | \; . 
 \eea

  Whereas the terms that contain the full components in \eqref{Theseperatecommutatortermestimateforthetangentialcomponents}, have the good factor $\frac{1}{(1+t+|q|)} $.
  Hence, we get an estimate on the commutator term of the higher order energy, where term with the weak factor is insensitive to the bad component $A_{{\underline{L}}}$ that bothered us (as explained above).

Furthermore, we get a weighted conservation law (see Corollary \ref{WeightedenergyestimateusingHandnosmallnessyet}) that has also a quantity that controls the tangential derivatives of our unknown solution $ \Lie_{Z^I}  \Phi_{V}$ (see Lemma \ref{expressionofTttplusTtr}), thanks to the non-vanishing weight $\widehat{w}^\prime$ (see Lemma \ref{equaivalenceoftildewandtildeandofderivativeoftildwandderivativeofhatw}). In other words, thanks to the non-trivial weight $\widehat{w}$ (see Definition \ref{defwidehatw}), we have control on the weighted space-time integral of the covariant tangential derivatives of $\Phi_{V}$\;, namely
\bea
\notag
 \int_{t_1}^{t_2}  \int_{\Sigma^{ext}_{\tau} }  \Big(    \frac{1}{2} \Big(  | (\derm_t   + \derm_r  ) \Lie_{Z^I}  \Phi_{V} |^2  +  \de^{ij}  | ( \derm_i - \frac{x_i}{r} \derm_{r}  ) \Lie_{Z^I}  \Phi_{V} |^2 \Big)  \cdot d\tau \cdot \widehat{w}^\prime (q) d^{n}x \;.
\eea

However, for the energy estimate to be successful, it is of crucial importance that the bounds in the space-time integrand on the right hand side of the inequality are the correct bounds. It is the case in our bounds since we have the following 

\beaa
    \notag
     && \int_{t_1}^{t_2}  \int_{\Sigma^{ext}_{\tau} }   \overbrace{| H_{LL } |}^{\text{good component}} \cdot  \overbrace{| \derm  \Lie_{Z^I}  \Phi_{V} |^2 }^{\text{decoupled bad derivatives}}     \cdot d\tau \cdot   \widetilde{w} ^\prime (q) d^{3}x \\
  \notag   
     && + \int_{t_1}^{t_2}  \int_{\Sigma^{ext}_{\tau} }  \overbrace{|  H |}^{\text{bad component}}  \cdot    \overbrace{ | \rderm   \Lie_{Z^I}  \Phi_{V} | }^{\text{good derivative}}  \cdot  \overbrace{| \derm  \Lie_{Z^I}  \Phi_{V} |}^{\text{decoupled bad derivative}}  \Big)  \cdot d\tau\cdot   \widetilde{w} ^\prime (q) d^{3}x \; .  \\
 \notag
&& + \int_{t_1}^{t_2}  \int_{\Sigma^{ext}_{\tau} } \Big( \overbrace{ | \derm H_{LL} | }^{\text{good component}} +  \overbrace{| \rderm H  |}^{\text{good derivative}}  \Big) \cdot  \overbrace{| \derm \Phi_{V} |^2}^{\text{decoupled bad derivatives}} \cdot d\tau \cdot   \widetilde{w} (q) d^{3}x \\
     \notag
     && + \int_{t_1}^{t_2}  \int_{\Sigma^{ext}_{\tau} }  \underbrace{ | \derm H |}_{\text{bad derivative}}   \cdot    \underbrace{| \rderm  \Lie_{Z^I}  \Phi_{V}  |}_{\text{good derivative}}  \cdot   \underbrace{ | \derm  \Lie_{Z^I}  \Phi_{V} |}_{\text{decoupled bad derivative}} \Big)  \cdot d\tau \cdot   \widetilde{w} (q) d^{3}x \; .\\
\eeaa
Also, in the commutator estimate \eqref{Theseperatecommutatortermestimateforthetangentialcomponents}, we have

 \bea
\notag
&&  \overbrace{\frac{1}{(1+t+|q|)} }^{\text{good factor}}  \cdot \sum_{|K|\leq |I|,}\,\, \sum_{|J|+(|K|-1)_+\le |I|} \,\,\,  \overbrace{ | \Lie_{Z^{J}} H |\, \cdot | \derm ( \Lie_{Z^K}  \Phi )  |}^{\text{bad components}} \\
&& + \underbrace{  \frac{1}{(1+|q|)}}_{\text{bad factor}}  \cdot \sum_{|K|\leq |I|,}\,\, \sum_{|J|+(|K|-1)_+\le |I|} \,\,\,  \underbrace{  | \Lie_{Z^{J}} H_{L  L} |}_{\text{good component}} \, \cdot  \underbrace{\big( \sum_{ V^\prime \in \cal T } | \derm ( \Lie_{Z^K}  \Phi _{V^\prime} )  | \:  \big) }_{\text{decoupled tangential components}} \; .
\eea

  Finally, we would have established the desired decoupled higher order energy estimates.

\subsection{The statement}\

In order to state the theorem, we shall make the following definitions and notations. 
\begin{definition}\label{definitionofMinkowskiandcovariantderivtaiveofMinkwoforafixedgivensystemofcoordinates}
Let  $(x^0, x^1, x^2, x^3)$ be a fixed system of coordinates, where we shall also write, sometimes, $x^0 = t$\;. We define $m$ be the Minkowski metric $(-1, +1, +1, +1)$ in this fixed system of coordinates. We define $\derm$ be the covariant derivative associate to the metric $m$\;..

\end{definition}

\begin{definition}
For an arbitrary tensor or arbitrary order, say $K_{\a\b}$, we define
 \bea
 \notag
|  K |^2 &:=&  \sum_{\a,\; \b \in   \{t, x^1, x^2, x^3 \}}  |  K_{\a\b} |^2   \, .
\eea
Since $\derm  K_{UV}$ is tensor, and $\derm  K$ is a tensor of higher order, the definition of the norm gives
 \bea
 \notag
|  \derm  K_{UV} |^2 &=&  \sum_{ \mu \in  \{t, x^1, x^2, x^3 \}} |  \derm_{\mu} K_{UV} |^2   \, , \\
 \notag
|  \derm  K |^2 &=&  \sum_{\a,\; \b,\, \mu \in  \{t, x^1, x^2, x^3 \}} |  \derm_{\mu} K_{\a\b} |^2   \, .
\eea
We define the norm of the tangential covariant derivative as 
 \bea
 \notag
|  \rderm  K |^2 &=&  \sum_{U \in  {\cal T}}   \sum_{\;  \a,\, \b \in  \{t, x^1, x^2, x^3 \}} |  \derm_{U} K_{\a\b} |^2   \, ,
\eea
where ${\cal T}$ is defined in Definition \ref{definitionofthenullframusingwavecoordinates}.
\end{definition}

\begin{definition} \label{DefinitionofMinkowskivectorfields}
Let $ x_{\b} = m_{\mu\b} x^{\mu} \;, $ \,$Z_{\a\b} = x_{\b} \pa_{\a} - x_{\a} \pa_{\b}  \;, $ and $S = t \pa_t + \sum_{i=1}^{n} x^i \pa_{i}  \; .$ The Minkowski vector fields are the vectors of the following set
\bea
{\cal Z}  := \big\{ Z_{\a\b}\;,\; S\;,\; \pa_{\a} \, \,  | \, \,   \a\;,\; \b \in \{ 0, 1, 2, 3 \} \big\}  \; .
\eea
Vectors belonging to ${\cal Z}$ will be denoted by $Z$\;.
\end{definition}

\begin{definition} \label{DefinitionofZI}
We define
\bea
Z^I :=Z^{\iota_1} \ldots Z^{\iota_k} \quad \text{for} \quad I=(\iota_1, \ldots,\iota_k),  
\eea
where $\iota_i$ is an $11$-dimensional integer index, with $|\iota_i|=1$, and $Z^{\iota_i}$ representing each a vector field from the family ${\cal Z}$. For a tensor $T$, of arbitrary order, either a scalar or valued in the Lie algebra, we define the Lie derivative as
\bea
\Lie_{Z^I} T :=\Lie_{Z^{\iota_1}} \ldots \Lie_{Z^{\iota_k}} T \quad \text{for} \quad I=(\iota_1, \ldots,\iota_k) .
\eea
\end{definition}

We are now ready to state our weighted decoupled energy estimate in the following theorem.

 \begin{theorem}\label{TheSeparateforHigherOrderEnergyEstimate}
Let the weights be as in Definitions \ref{defoftheweightw}, \ref{defwidehatw} and \ref{defwidetildew}. For solutions of the dynamically coupled wave equation \eqref{nonlinearsystemoftensorialwaveequations} ,with $H$ defined in \eqref{definitionofbigHusingdefinitionofMinkwoskimetric}, satisfying
\bea
| H| < \frac{1}{3}\; ,
\eea
and for $\Phi$ decaying sufficiently fast at spatial infinity, we have for all $V \in {\cal U}$\,, that is in the frame decomposition of the tensorial equations, the following estimate
  \bea\label{TheenerhyestimatewithtermsinvolvingHandderivativeofHandwithwandhatw}
  \notag
 &&     \int_{\Sigma^{ext}_{t_2} }  |\derm \Lie_{Z^I}  \Phi_{V} |^2     \cdot w(q)  \cdot d^{n}x   + \int_{t_1}^{t_2}  \int_{\Sigma^{ext}_{\tau} }   | \rderm \Lie_{Z^I}  \Phi_{V} |^2   \cdot d\tau \cdot \widehat{w}^\prime (q) d^{n}x \\
  \notag 
   &\les &       \int_{\Sigma^{ext}_{t_1} }  |\derm  \Lie_{Z^I}  \Phi_{V} |^2     \cdot   \widetilde{w} (q)  \cdot d^{3}x \\
    \notag
     && +  \int_{t_1}^{t_2}  \int_{\Sigma^{ext}_{\tau} }  | H_{LL } | \cdot | \derm  \Lie_{Z^I}  \Phi_{V} |^2    \cdot d\tau \cdot   \widetilde{w} ^\prime (q) d^{3}x \\
  \notag   
     && + \int_{t_1}^{t_2}  \int_{\Sigma^{ext}_{\tau} }  |  H |  \cdot   | \rderm   \Lie_{Z^I}  \Phi_{V} |  \cdot  | \derm  \Lie_{Z^I}  \Phi_{V} | \Big)  \cdot d\tau\cdot   \widetilde{w} ^\prime (q) d^{3}x \; .  \\
 \notag
&& + \int_{t_1}^{t_2}  \int_{\Sigma^{ext}_{\tau} } \Big(  | \derm H_{LL} |  +  | \rderm H  |  \Big) \cdot | \derm \Phi_{V} |^2 \cdot d\tau \cdot   \widetilde{w} (q) d^{3}x \\
     \notag
     && + \int_{t_1}^{t_2}  \int_{\Sigma^{ext}_{\tau} }  | \derm H |  \cdot    | \rderm  \Lie_{Z^I}  \Phi_{V}  |  \cdot  | \derm  \Lie_{Z^I}  \Phi_{V} | \Big)  \cdot d\tau \cdot   \widetilde{w} (q) d^{3}x \; \\
   &&  +  \int_{t_1}^{t_2}  \int_{\Sigma^{ext}_{\tau} }  |  g^{\mu\a} \derm_{\mu } \derm_\a  \Lie_{Z^I}  \Phi_{V} | \cdot |  \derm_t  \Lie_{Z^I}  \Phi_{V} |  \cdot d\tau \cdot   \widetilde{w} (q) d^{3}x  \; .
 \eea

 We also have the following decoupled estimate on the tangential components of the commutator term, which could be good components that satisfy a better wave equations with a good non-linearity. For any $V \in \cal T$,
 \bea\label{Theseperatecommutatortermestimateforthetangentialcomponents}
\notag
&&| g^{\la\mu}    \derm_{\la}   \derm_{\mu} \Lie_{Z^I} \Phi_{V}    - \Lie_{Z^I}  ( g^{\la\mu} \derm_{\la}   \derm_{\mu}     \Phi_{V} )  |  \\
  \notag
   &\les&  \sum_{|K| < |I| }  | g^{\la\mu} \cdot \derm_{\la}   \derm_{\mu} (  \Lie_{Z^{K}}  \Phi_{V} ) | \\
   \notag
&&+  \frac{1}{(1+t+|q|)}  \cdot \sum_{|K|\leq |I|,}\,\, \sum_{|J|+(|K|-1)_+\le |I|} \,\,\, | \Lie_{Z^{J}} H |\, \cdot | \derm ( \Lie_{Z^K}  \Phi )  | \\
&& +   \frac{1}{(1+|q|)}  \cdot \sum_{|K|\leq |I|,}\,\, \sum_{|J|+(|K|-1)_+\le |I|} \,\,\, | \Lie_{Z^{J}} H_{L  L} |\, \cdot  \big( \sum_{ V^\prime \in \cal T } | \derm ( \Lie_{Z^K}  \Phi _{V^\prime} )  | \:  \big) \; ,
\eea
  where $(|K|-1)_+=|K|-1$ if $|K|\geq 1$ and $(|K|-1)_+=0$ if $|K|=0$\,. For any $V \in \cal U$\,, the estimate in \eqref{Theseperatecommutatortermestimateforthetangentialcomponents} holds with the sum $\sum_{ V^\prime \in \cal T }$ replaced by $ \sum_{ V^\prime \in \cal U }$\,.

\begin{remark}
Here $\Sigma^{ext}$ is defined in Definition \ref{definitionoftheexteriorslicesigmat}. We also have the non-exterior version of this Theorem \ref{TheenerhyestimatewithtermsinvolvingHandderivativeofHandwithwandhatw}, where the integration is taken on the whole slice $\Sigma_{t} $ prescribed by constant $t $ hypersurfaces. 
\end{remark}
\begin{remark}
The same estimates hold for any tensor of arbitrary order. In that case, for each vector $V \in {\cal T}$ appearing on the left hand side of \eqref{Theseperatecommutatortermestimateforthetangentialcomponents}, it has to be replaced on the right hand side of \eqref{Theseperatecommutatortermestimateforthetangentialcomponents} by the $\sum_{ V^\prime \in \cal T }$\,, and for each vector  $V \in {\cal U}$, it has to be replaced on the right hand side of \eqref{Theseperatecommutatortermestimateforthetangentialcomponents} by the $\sum_{ V^\prime \in {\cal U} }$\,.
\end{remark}

\end{theorem}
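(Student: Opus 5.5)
For the energy estimate \eqref{TheenerhyestimatewithtermsinvolvingHandderivativeofHandwithwandhatw}, I treat each component separately. Fix $V\in{\cal U}$ and set $\psi:=\Lie_{Z^I}\Phi_V$. Because the norms are taken in the fixed coordinates, $\psi$ is simply a scalar-valued function, namely the contraction of the tensor $\Lie_{Z^I}\Phi$ with $V$, and $\derm\psi$ is its gradient. I introduce the non-symmetric energy-momentum tensor
\[
T_{\mu}{}^{\nu}=g^{\nu\a}\derm_\a\psi\,\derm_\mu\psi-\tfrac12\delta_\mu^\nu\, g^{\a\b}\derm_\a\psi\,\derm_\b\psi ,
\]
which avoids having to differentiate $g$ through Christoffel symbols of $\g$. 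I then contract it with the weighted vector $w(q)\,\pa_t$. The divergence in the coordinates of $m$ equals $w\,\derm_t\psi\, g^{\a\b}\derm_\a\derm_\b\psi$ plus terms $\derm H\cdot(\derm\psi)^2 w$, plus the weight-derivative term $w'(q)\,T_t{}^{\nu}\pa_\nu q$. Integrating over the exterior region between $\Sigma^{ext}_{t_1}$ and $\Sigma^{ext}_{t_2}$ and using Stokes, the lateral boundary is the outgoing Minkowski cone $q=\text{const}$. The flux through it has a favourable sign once $|H|<1/3$. Since $\Phi$ decays at infinity, no contribution comes from spatial infinity.

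The key algebraic point concerns the weight term. In the Minkowski part, $w'(q)\,(T^{(m)}_{tt}+T^{(m)}_{tr})$ equals $\tfrac12 w'\big(|(\derm_t+\derm_r)\psi|^2+\de^{ij}|(\derm_i-\tfrac{x_i}{r}\derm_r)\psi|^2\big)$, which controls $|\rderm\psi|^2\widehat w'$; this is the content of Lemma \ref{expressionofTttplusTtr}. The $H$-part of this term must be split using the null decomposition $H^{\a\b}\derm_\a\psi\derm_\b\psi$. Only $H^{\underline L\underline L}\sim H_{LL}$ multiplies $(\derm_{\underline L}\psi)^2$, while every other term contains at least one tangential derivative. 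This yields exactly $|H_{LL}|\,|\derm\psi|^2+|H|\,|\rderm\psi|\,|\derm\psi|$ with weight $\widetilde w'$. The same decomposition applied to the $\derm H$ terms gives $(|\derm H_{LL}|+|\rderm H|)|\derm\psi|^2+|\derm H|\,|\rderm\psi|\,|\derm\psi|$. To bound these, I commute $\derm_\mu$ through the contraction and keep only $\derm_{\underline L}H$ paired with $(\derm_{\underline L}\psi)^2$. Finally, the boundary terms are compared with $\int|\derm\psi|^2 w$ by Lemma \ref{howtogetthedesirednormintheexpressionofenergyestimate} using the smallness of $H$. The equivalence of the weights $w$, $\widehat w$, $\widetilde w$ (Lemma \ref{equaivalenceoftildewandtildeandofderivativeoftildwandderivativeofhatw}) then absorbs constants.

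For the commutator estimate \eqref{Theseperatecommutatortermestimateforthetangentialcomponents}, I induct on $|I|$. Write $g^{\la\mu}\derm_\la\derm_\mu=\Box_m+H^{\la\mu}\derm_\la\derm_\mu$. Since $\Lie_Z$ commutes with $\derm$ and $[\Box_m,\Lie_Z]=c_Z\Box_m$ with $c_Z$ constant, the Minkowski part produces the lower-order sum $\sum_{|K|<|I|}|g\derm\derm\Lie_{Z^K}\Phi_V|$. The corrections from rewriting $\Box_m$ as $g\derm\derm-H\derm\derm$ are absorbed into the $H$ terms. The $H$ part produces products $\Lie_{Z^J}H^{\la\mu}\,\derm_\la\derm_\mu\Lie_{Z^K}\Phi_V$ with $|J|+|K|\le|I|$ and $|J|\ge1$, together with the $\Lie_Z$-deformation of the contraction with $V$. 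Here I use that $\Lie_Z V$ expands into frame vectors with bounded coefficients, and that $\Lie_Z$ of $\cal T$ stays in $\cal T$ up to good factors.

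The main obstacle is the second derivative, which I must convert into one derivative of Lie derivatives. My tool is the refined inequality \eqref{estimateforgradientoftensorestaimetedbyLiederivativesoftensorwithrefinedcomponentsforthebadfactor}: $|\derm\Psi_{UV}|\les (1+t+|q|)^{-1}\sum|\Lie_Z\Psi|+(1+|q|)^{-1}\sum_{V'\in\cal T}|\Lie_Z\Psi_{U'V'}|$. I first decompose $H^{\la\mu}\derm_\la\derm_\mu$ in the null frame. Every component except $H^{\underline L\underline L}$ has a tangential derivative, and a tangential derivative gains $(1+t+|q|)^{-1}$ by the standard $|\rderm f|\les(1+t+|q|)^{-1}\sum|Zf|$. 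The remaining term is $H_{LL}\derm_{\underline L}\derm_{\underline L}$. For it, I apply the refined inequality to the tensor $\Psi=\derm\Lie_{Z^K}\Phi$, where the free $\Phi$-index is $V\in\cal T$. This shows that the bad factor $(1+|q|)^{-1}$ falls only on components with tangential second slot, i.e. $\derm\Lie_{Z^{K'}}\Phi_{V'}$ with $V'\in\cal T$. For $V\in\cal U$, the same argument applies with $\cal T$ replaced by $\cal U$. The index bookkeeping $|J|+(|K|-1)_+\le|I|$ then follows, because one $Z$ is spent converting $\derm$ into $\Lie_Z$.
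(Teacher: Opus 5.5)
Your commutator argument follows the paper's route, but the energy estimate has a genuine gap at the lateral boundary, and the multiplier is also wrong.

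\textbf{The lateral boundary.} You cut the exterior region with the Minkowski cone $\{q=q_0\}$ and assert that its flux has a favourable sign once $|H|<\frac{1}{3}$. This is false. The flux density there is $T^{(\bf{g})}_{tt}+T^{(\bf{g})}_{rt}=T^{(\bf{g})}_{Lt}$. Its Minkowski part is $\frac12\big(|\derm_L\psi|^2+\sum_A|\derm_{e_A}\psi|^2\big)$, which contains no $\derm_{\underline L}\psi$. So it is degenerate, and smallness of $H$ cannot absorb anything: $|H|<\frac13$ gives coercivity only on the spacelike slices, via Lemma \ref{howtogetthedesirednormintheexpressionofenergyestimate}.

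Decomposing the $H$-part of Lemma \ref{comutingthetermthatcarriesthederivativeoftheweight} in the null frame, the terms containing $\derm_{\underline L}\psi$ are $-\frac18 H_{LL}|\derm_{\underline L}\psi|^2-\frac14 H_{LL}\,\derm_L\psi\,\derm_{\underline L}\psi$. For example, at a point where $\derm_L\psi=\derm_{e_A}\psi=0$, the flux density is exactly $-\frac18H_{LL}|\derm_{\underline L}\psi|^2<0$ whenever $H_{LL}>0$. Geometrically, $g^{\mu\nu}\pa_\mu q\,\pa_\nu q=H_{LL}$, so for $H_{LL}>0$ the cone is timelike for $\g$ and energy can enter $\{q\ge q_0\}$ through it.

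You are therefore left with an extra boundary term $\int_{\{q=q_0\}}|H_{LL}|\,|\derm\psi|^2\,\widetilde w$, which is not among the terms of the theorem. The paper avoids this by construction. There $\Sigma^{ext}_t$ is cut out by the hypersurface $N$ of Definition \ref{definitionofNandofNtruncatedbtweentwots}, whose $m$-normal $\hat L\propto\widetilde L$ is chosen with $T^{(\bf{g})}_{\widetilde L t}\ge 0$ (Definition \ref{definitionofLtilde}). Hence $\int_{N_{t_1}^{t_2}}T^{(\bf{g})}_{\hat L t}\,\widetilde w\,dv^{(\bf{m})}_N\ge0$ sits on the left-hand side and can be dropped.

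\textbf{The multiplier.} Contracting with $w(q)\pa_t$ cannot produce the left-hand side of the theorem. Since $w\equiv1$ for $q<0$, the bulk term $w'(q)\big(T^{(\bf{g})}_{tt}+T^{(\bf{g})}_{rt}\big)$ vanishes on $\{q<0\}$. The theorem, however, controls $|\rderm\Lie_{Z^I}\Phi_V|^2$ there with weight $\widehat w'=-2\mu(1+|q|)^{2\mu-1}>0$. Lemma \ref{equaivalenceoftildewandtildeandofderivativeoftildwandderivativeofhatw} gives $\widetilde w\sim w$ and $\widetilde w'\sim\widehat w'$, but not $w'\sim\widehat w'$, so no equivalence of weights rescues this. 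The paper contracts with $\widetilde w(q)\pa_t$, where $\widetilde w=w+\widehat w$; this is exactly why $\widetilde w$ and $\widetilde w'$ appear on the right-hand side.

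\textbf{The commutator.} This part is essentially the paper's argument: null decomposition, only $H_{LL}\derm_{\underline L}\derm_{\underline L}$ carries $(1+|q|)^{-1}$, and \eqref{estimateforgradientoftensorestaimetedbyLiederivativesoftensorwithrefinedcomponentsforthebadfactor} is applied to $\Psi=\derm\Lie_{Z^K}\Phi$. Using the symmetry of the flat Hessian to move the tangential vector outside in $\derm_{\underline L}\derm_L$ and $\derm_{\underline L}\derm_{e_A}$ is a legitimate shortcut. It replaces the paper's case analysis with $e_A=\frac1r C^{ij}_AZ_{ij}$ or $\frac1t C^j_AZ_{0j}$.

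Drop the step on the deformation $\Lie_Z V$, though. With the paper's convention, where $\Lie_{Z^I}\Phi_V$ is the $V$-component of $\Lie_{Z^I}\Phi$, no such term arises. Your claim of bounded coefficients is also false: for example, $[\pa_i,L]=r^{-1}\rpa_i$.
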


 \section{Energy estimates in the exterior region}
 
 The goal of this section is to prove \eqref{TheenerhyestimatewithtermsinvolvingHandderivativeofHandwithwandhatw} in Theorem \ref{TheenerhyestimatewithtermsinvolvingHandderivativeofHandwithwandhatw}.

\subsection{The definitions and notations}\

 \begin{definition}\label{defofthestreessenergymomentumtensorforwaveequationhere}
 Let either $\Phi$ be a tensor of arbitrary order. For simplicity of notation, we consider $\Phi$ to be a one-tensor. We consider the following non-symmetric tensor for wave equations. We define
\bea\label{definitionofthewavestreessenergymomentumtensorfortensorphiVwhereVisanyvector}
\notag
T^{(\bf{g}) \; \mu}_{\;\;\;\;\;\;\;\;\;\;\nu} ( \Lie_{Z^I}  \Phi_{V} )    =  g^{\mu\a}< \derm_\a  \Lie_{Z^I}  \Phi_{V} ,   \derm_\nu  \Lie_{Z^I}  \Phi_{V}> - \frac{1}{2} m^{\mu}_{\;\;\;\nu}  \cdot g^{\a\b}  < \derm_\a  \Lie_{Z^I}  \Phi_{V},   \derm_\b  \Lie_{Z^I}  \Phi_{V}> \;, \\
 \eea
 where we raise index with respect to the Minkowski metric $m$\,, defined to be Minkowski in wave coordinates. We consider $\Phi$ to be a field decaying fast enough at spatial infinity.
 \end{definition}

\begin{definition}\label{definitionofLtilde}
We define $ \widetilde{L}^{\nu} $ as a vector at a point in the space-time, such that  $T_{\widetilde{L} t}^{(\bf{g})} \geq 0$\;. We note that such a definition does not give a unique vector, however, we will use this to construct $N_{t_1}^{t_2} (q_0) $ in Definition \ref{definitionofNandofNtruncatedbtweentwots} and to define $ \hat{L}^{\nu} $ in Definition \ref{definitionofwidetildeLfordivergencetheoremuse}.
 \end{definition}

\begin{definition}\label{definitionofNandofNtruncatedbtweentwots}
We define $N_{t_1}^{t_2} (q_0) $ as a boundary made by the following:

For a $t_N \geq 0 $\;, we take a curve in the plane $(t, r)$\;, at an $\Om \in \SSS^{n-1}$ fixed, that starts at $(t=t_N\,,\, r=0) $ and extends to the future of $\Sigma_{t_0}$\;, and where $ \widetilde{L}^{\nu} $ (defined in Definition \ref{definitionofLtilde}) is orthogonal (with respect to the Minkowski metric $m$) to that curve at each point. For a $t_N \geq 0 $ large enough, depending on $q_0$\;, there exits such a curve that is contained in the region $\{(t, x) \;|\; q:= r-t \leq q_0  \;\,,\; t \leq t_2 \}$\;.

We define $N (q_0)$ as the product of $\SSS^{n-1}$ and of such future inextendable curve, and we define $N_{t_1}^{t_2} (q_0) $ as $N (q_0) $ truncated between $t_1$ and $t_2$\;. Thus,  $N (q_0)$ depends on the choice of the starting point $(t_N, 0)$ that is the tip of $N (q_0)$\;, however we write $N(q_0)$ to refer to a choice of such $N(q_0)$\;. To lighten the notation, we write $N$\;, instead of $N(q_0)$\;, where a $q_0$ has been chosen fixed. 

\end{definition} 

\begin{definition} Let $\overline{C}$ be the exterior region defined as the following:

The boundary $N$ (given in Definition \eqref{definitionofNandofNtruncatedbtweentwots}) separates the space-time into two regions: one interior region that we shall call $C$ where space-like curves are contained in a compact region, and the other region is the complement of $C$\;, where space-like curves can go to spatial infinity. We define $\overline{C}$ as the complement of $C$\;. 

\end{definition} 

\begin{definition}\label{definitionoftheexteriorslicesigmat}
 We define for a given fixed $t$\;,
   \bea
\Sigma^{ext}_{t}  &:=&  \Sigma_t  \cap \overline{C} \, .
\eea
\end{definition}

\begin{definition} 

We define $n^{(\bf{m}), \mu}_{\Sigma}$ as the future oriented unit orthogonal vector (for the metric $m$) to the hypersurface $\Sigma^{ext}_{t}$\;, and $dv^{(\bf{m})}_\Sigma$ as the induced volume form on $\Sigma_t$\;.

We denote by $n^{(\bf{m}), \mu}_{N}$ an orthogonal vector (for the metric $m$) to the hypersurface $N_{t_1}^{t_2}$\;,  and by $dv^{(\bf{m})}_N$ a volume form on $N_{t_1}^{t_2} $ such that the divergence theorem applies.

\end{definition} 

\begin{definition}\label{definitionofwidetildeLfordivergencetheoremuse}
We define $ \hat{L}^{\nu} $ as a vector proportional to $ \widetilde{L}^{\nu} $ (defined in Definition \ref{definitionofLtilde}), and therefore orthogonal (with respect to the Minkowski metric $m$) to $N$ (that is defined in Definition \ref{definitionofNandofNtruncatedbtweentwots}), and such that $\hat{L}$ is oriented,  and with Euclidian length, in such a way that the stated divergence theorem in what follows hold true: see, for instance, Lemmas \ref{conservationlawwithoutcomputingdivergenceofTmut} and \ref{weightedconservationlawintheexteriorwiththeenergymomuntumtensorcontarctedwithvectordt} and Corollary \ref{WeightedenergyestimateusingHandnosmallnessyet}.
 \end{definition}

\begin{remark}
Based on the construction of $N(q_0)$ in Definition \ref{definitionofNandofNtruncatedbtweentwots}, we have that the exterior region $\overline{C} $ includes the region $\{(t, x) \;|\; q:= r- t \geq q_0 \}$\;.
\end{remark}

    \begin{definition}\label{definitionofbigHandsmallhandrecallofdefinitionofMinkowskimetricminrelationtowavecoordiantesasreminder}
We define $H$ as the 2-tensor given by
\bea
H^{\mu\nu} &:=& g^{\mu\nu}-m^{\mu\nu} \;,
\eea
where $m^{\mu\nu}$ is the inverse of the Minkowski metric $m_{\mu\nu}$\;, defined in Definition \ref{definitionofMinkowskiandcovariantderivtaiveofMinkwoforafixedgivensystemofcoordinates}. 
In addition, we define
\bea
h_{\mu\nu} &:=& g_{\mu\nu} - m_{\mu\nu} \; , \\
h^{\mu\nu} &:=& m^{\mu\mu^\prime}m^{\nu\nu^\prime}h_{\mu^\prime\nu^\prime} \;.
\eea
\end{definition}

\subsection{Conservation laws for wave equations}\
 
\begin{lemma}\label{generalconservationlawwithanyvectorfieldX}
For a vector field $X^{\nu}$\,, we have
 \bea
 \notag
&& \int_{t_1}^{t_2}  \int_{\Sigma^{ext}_{\tau}} \derm^{\mu} \big( X^{\nu}T_{\mu\nu}^{(\bf{g})} \big)  \cdot dv^{(\bf{m})} \\
 \notag
 &=&  \int_{t_1}^{t_2}  \int_{\Sigma^{ext}_{\tau} } \Big( \big( \derm^{\mu} X^{\nu}  \big) \cdot T_{\mu\nu}^{(\bf{g})}  + X^{\nu}< g^{\mu\a} \derm_{\mu } \derm_\a \Lie_{Z^I}  \Phi_{V} ,   \derm_\nu \Lie_{Z^I}  \Phi_{V}  >\\
  \notag
&& +( \derm_{\mu } H^{\mu\a} ) \cdot X^{\nu} < \derm_\a \Lie_{Z^I}  \Phi_{V}  ,   \derm_\nu \Lie_{Z^I}  \Phi_{V}  > \\
\notag
&& - \frac{1}{2} X^{\nu} m^{\mu}_{\;\;\;\nu}  \cdot  ( \derm_{\mu } H^{\a\b} ) \cdot < \derm_\a \Lie_{Z^I}  \Phi_{V}  ,   \derm_\b \Lie_{Z^I}  \Phi_{V}  >   \Big) \cdot dv^{(\bf{m})}  \,,
\eea
 where the tensor $T_{\mu\nu}^{(\bf{g})} $ is defined in Definition \ref{defofthestreessenergymomentumtensorforwaveequationhere}.
\end{lemma}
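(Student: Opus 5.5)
The identity is pointwise, so the plan is to prove it at the level of the integrands and then integrate over $\int_{t_1}^{t_2}\int_{\Sigma^{ext}_\tau}\cdot\, dv^{(\bf{m})}$. No divergence theorem is needed yet, since both sides are space-time integrals over the same region. Write $\psi := \Lie_{Z^I}\Phi_V$, viewed as a scalar (or Lie-algebra valued) field, so that $\derm_\a\psi$ makes sense componentwise in the fixed coordinates. Since $\derm$ is the Levi-Civita connection of $m$, we have $\derm m = 0$. Hence raising and lowering indices with $m$ commutes with $\derm$, and $\derm^{\mu} m^{\;\;\;\nu}_{\mu} = 0$. Apply the Leibniz rule to get
\[
\derm^{\mu}\big(X^{\nu}T^{(\bf{g})}_{\mu\nu}\big) = (\derm^{\mu}X^{\nu})\,T^{(\bf{g})}_{\mu\nu} + X^{\nu}\,\derm_{\mu}T^{(\bf{g})\,\mu}_{\;\;\;\;\;\;\;\;\;\;\nu}.
\]
The first term is already the first term on the right-hand side, so everything reduces to computing the divergence $\derm_\mu T^{(\bf{g})\,\mu}_{\;\;\;\;\;\;\;\;\;\;\nu}$.

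For the first piece of $T$, expand $\derm_\mu\big(g^{\mu\a}<\derm_\a\psi,\derm_\nu\psi>\big)$ by Leibniz into three terms:
\[
(\derm_\mu g^{\mu\a})<\derm_\a\psi,\derm_\nu\psi> + <g^{\mu\a}\derm_\mu\derm_\a\psi,\derm_\nu\psi> + g^{\mu\a}<\derm_\a\psi,\derm_\mu\derm_\nu\psi>.
\]
Because $\derm m^{\mu\a}=0$, we have $\derm_\mu g^{\mu\a} = \derm_\mu H^{\mu\a}$. This produces the $(\derm_\mu H^{\mu\a})X^\nu<\cdot,\cdot>$ term and the wave-operator term of the statement.

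For the second piece of $T$, $m^{\mu}_{\;\;\;\nu}=\delta^\mu_\nu$ is parallel, so its divergence is $-\tfrac12\derm_\nu\big(g^{\a\b}<\derm_\a\psi,\derm_\b\psi>\big)$. Expanding this gives
\[
-\tfrac12(\derm_\nu H^{\a\b})<\derm_\a\psi,\derm_\b\psi> \;-\; \tfrac12 g^{\a\b}\big(<\derm_\nu\derm_\a\psi,\derm_\b\psi>+<\derm_\a\psi,\derm_\nu\derm_\b\psi>\big).
\]
Using the symmetry of $g^{\a\b}$ and of the inner product $<\cdot,\cdot>$, the last bracket equals $-g^{\a\b}<\derm_\a\psi,\derm_\nu\derm_\b\psi>$. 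Contracting the first part with $X^\nu$ and writing $\derm_\nu = m^{\mu}_{\;\;\;\nu}\derm_\mu$ reproduces the term $-\tfrac12 X^\nu m^{\mu}_{\;\;\;\nu}(\derm_\mu H^{\a\b})<\cdot,\cdot>$ of the statement.

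The only delicate point is the cancellation of the second-derivative terms. We need $g^{\mu\a}<\derm_\a\psi,\derm_\mu\derm_\nu\psi> - g^{\a\b}<\derm_\a\psi,\derm_\nu\derm_\b\psi> = 0$. After relabeling indices this holds because $\derm_\mu\derm_\nu\psi=\derm_\nu\derm_\mu\psi$. Here $\psi$ is the component $\Lie_{Z^I}\Phi_V$, a scalar function in the fixed coordinates, and the flat connection $\derm$ is torsion-free. If instead $\psi$ is treated as the full tensor, the commutator $[\derm_\mu,\derm_\nu]$ still vanishes because $m$ is flat. Either way the cancellation holds, and I would check it explicitly, since it is exactly where the non-symmetric choice of $T$ (mixing $g$ and $m$) could fail. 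Finally, multiply by $X^\nu$, add the $(\derm^\mu X^\nu)T_{\mu\nu}$ term, and integrate against $dv^{(\bf{m})}$ over the exterior region to obtain the stated identity.
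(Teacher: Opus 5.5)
Your proposal is correct and follows essentially the same route as the paper. Both use the Leibniz rule for $\derm^{\mu}(X^{\nu}T_{\mu\nu})$, expand $\derm_{\mu}T^{\mu}{}_{\nu}$, cancel the two second-order terms because covariant derivatives of the flat connection commute (the paper phrases this as vanishing Christoffel symbols of $m$ in the fixed coordinates plus tensoriality), and replace $\derm g^{-1}$ by $\derm H$ using $\derm m=0$; your remark that the identity is pointwise and needs no divergence theorem is also right, since the boundary-term display at the start of the paper's proof only prepares the next lemma.
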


\begin{proof}
Contracting the stress-energy-momentum tensor with a vector field $X^{\nu}$\,, and applying the divergence theorem to $X^{\nu}T_{\mu\nu}$\,, one gets
 \beaa
 \notag
&&  \int_{\Sigma^{ext}_{t_2} }  \big( X^{\nu}T_{\mu\nu}^{(\bf{g})} \big)  n^{(\bf{g}), \mu}_{\Sigma} \cdot dv^{(\bf{g})}_\Sigma +   \int_{N_{t_1}^{t_2} }  \big( X^{\nu}T_{\mu\nu}^{(\bf{g})} \big)  n^{(\bf{g}), \mu}_{N} \cdot dv^{(\bf{g})}_N + \int_{t_1}^{t_2}  \int_{\Sigma^{ext}_{\tau} (q)} \derm^{\mu} \big( X^{\nu}T_{\mu\nu}^{(\bf{g})} \big)  \cdot dv^{(\bf{g})} \\
 \notag
 &=&   \int_{\Sigma^{ext}_{t_1} }  \big( X^{\nu}T_{\mu\nu}^{(\bf{g})} \big)  n^{(\bf{g}), \mu}_{\Sigma} \cdot dv^{(\bf{g})}_\Sigma  \; . 
 \eeaa
We then compute 
\beaa
&& \derm^{\mu }T^{(\bf{g})}_{\mu\nu} \\
&:=& m^{\mu\la}\derm_{\la }T_{\mu\nu} =\derm_{\mu } T^{\mu}_{\;\;\;\nu} \;  \\
&=& ( \derm_{\mu } g^{\mu\a} ) \cdot < \derm_\a \Lie_{Z^I}  \Phi_{V} ,   \derm_\nu \Lie_{Z^I}  \Phi_{V}> \\
&& - \frac{1}{2} m^{\mu}_{\;\;\;\nu}  \cdot  ( \derm_{\mu } g^{\a\b} ) \cdot < \derm_\a \Lie_{Z^I}  \Phi_{V} ,   \derm_\b \Lie_{Z^I}  \Phi_{V}> \\
&& + < g^{\mu\a} \derm_{\mu } \derm_\a \Lie_{Z^I}  \Phi_{V} ,   \derm_\nu \Lie_{Z^I}  \Phi_{V} > \\
&& +  g^{\mu\a}< \derm_\a \Lie_{Z^I}  \Phi_{V} ,  \derm_{\mu } \derm_\nu \Lie_{Z^I}  \Phi_{V} >   -   g^{\a\b}  <\derm_{\nu } \derm_\a \Lie_{Z^I}  \Phi_{V} ,   \derm_\b \Lie_{Z^I}  \Phi_{V}> \; .
\eeaa
Now, we can compute in wave coordinates $\derm_{\nu } \derm_\a \Phi  $, and if the end result for $\derm^{\mu }T_{\mu\nu}$ gives a tensor in $\nu$, then the identity that we obtain will be true independently of the system of coordinates. In wave coordinates, the Christoffel symbols are vanishing and therefore, the two derivates commute, i.e. $ \derm_{\nu } \derm_\a \Phi  = \derm_{\a } \derm_\nu \Phi \; .$ Thus, in wave coordinates
\beaa
&& \derm^{\mu }T^{(\bf{g})}_{\mu\nu}  \\
&=&( \derm_{\mu } g^{\mu\a} ) \cdot < \derm_\a \Lie_{Z^I}  \Phi_{V} ,   \derm_\nu\Lie_{Z^I}  \Phi_{V} >  \\
&& - \frac{1}{2} m^{\mu}_{\;\;\;\nu}  \cdot  ( \derm_{\mu } g^{\a\b} ) \cdot < \derm_\a \Lie_{Z^I}  \Phi_{V},   \derm_\b \Lie_{Z^I}  \Phi_{V}>  + < g^{\mu\a} \derm_{\mu } \derm_\a \Lie_{Z^I}  \Phi_{V} ,   \derm_\nu \Lie_{Z^I}  \Phi_{V} > \; .
\eeaa
 Since the end result is a tensor in $\nu$\:, we obtain
 \beaa
\derm^{\mu }T^{(\bf{g})}_{\mu\nu} &=&( \derm_{\mu } g^{\mu\a} ) \cdot < \derm_\a \Lie_{Z^I}  \Phi_{V} ,   \derm_\nu \Lie_{Z^I}  \Phi_{V} > \\
&&  - \frac{1}{2} m^{\mu}_{\;\;\;\nu}  \cdot  ( \derm_{\mu } g^{\a\b} ) \cdot < \derm_\a \Lie_{Z^I}  \Phi_{V},   \derm_\b \Lie_{Z^I}  \Phi_{V}> \\
&& + < g^{\mu\a} \derm_{\mu } \derm_\a \Lie_{Z^I}  \Phi_{V} ,   \derm_\nu \Lie_{Z^I}  \Phi_{V} > \; .
\eeaa
Contracting the stress-energy-momentum tensor with respect to the second index, with a vector field $X^{\nu}$, and computing the covariant divergence of $X^{\nu}T_{\mu\nu}$, one gets
\bea
\derm^{\mu} \big( X^{\nu}T_{\mu\nu}^{(\bf{g})} \big) &=& \derm^{\mu} X^{\nu}  \cdot T_{\mu\nu}^{(\bf{g})} +  \derm^{\mu }T^{(\bf{g})}_{\mu X}  \; .
\eea
We can then write 
 \beaa
 \notag
&&\derm^{\mu} \big( X^{\nu}T_{\mu\nu}^{(\bf{g})} \big) \\
\notag
&=& \big( \derm^{\mu} X^{\nu}  \big) \cdot T_{\mu\nu}^{(\bf{g})} +X^{\nu}   < g^{\mu\a} \derm_{\mu } \derm_\a \Lie_{Z^I}  \Phi_{V},   \derm_\nu \Lie_{Z^I}  \Phi_{V}>\\
 \notag
&& +( \derm_{\mu } g^{\mu\a} ) \cdot X^{\nu}  < \derm_\a \Lie_{Z^I}  \Phi_{V} ,   \derm_\nu \Lie_{Z^I}  \Phi_{V} > \\
&& - \frac{1}{2} X^{\nu}  m^{\mu}_{\;\;\;\nu}  \cdot  ( \derm_{\mu } g^{\a\b} ) \cdot < \derm_\a \Lie_{Z^I}  \Phi_{V},   \derm_\b\Lie_{Z^I}  \Phi_{V}> \; . 
\eeaa
Using the definition of $H^{\mu\nu}:=g^{\mu\nu}-m^{\mu\nu}$, we get the result
\end{proof}
We recall that $ \hat{L}^{\nu} $ is defined as in Definition \ref{definitionofwidetildeLfordivergencetheoremuse}.
\begin{lemma}\label{conservationlawwithoutcomputingdivergenceofTmut}
We have
  \beaa
  \notag
   &&   \int_{\Sigma^{ext}_{t_2} } \Big(  - \frac{1}{2} g^{t t}< \derm_t \Lie_{Z^I}  \Phi_{V} ,   \derm_t \Lie_{Z^I}  \Phi_{V} >   + \frac{1}{2} g^{j i}  < \derm_j \Lie_{Z^I}  \Phi_{V} ,   \derm_i \Lie_{Z^I}  \Phi_{V} > \Big)    \cdot d^{n}x \\
   \notag
&& +  \int_{N_{t_1}^{t_2} }  \big( T_{\hat{L} t}^{(\bf{g})} \big)  \cdot dv^{(\bf{m})}_N \\
     \notag
   &=&   \int_{\Sigma^{ext}_{t_1} }   \Big(  - \frac{1}{2} g^{t t}< \derm_t \Lie_{Z^I}  \Phi_{V} ,   \derm_t \Lie_{Z^I}  \Phi_{V} >   + \frac{1}{2} g^{j i}  < \derm_j \Lie_{Z^I}  \Phi_{V},   \derm_i \Lie_{Z^I}  \Phi_{V} >  \Big)  \cdot d^{n}x  \\
     \notag
     &&- \int_{t_1}^{t_2}  \int_{\Sigma^{ext}_{\tau} } \Big( \derm^{\mu }T_{\mu t}^{(\bf{g})}     \Big) \cdot d\tau d^{n}x   \; .
 \eeaa
\end{lemma}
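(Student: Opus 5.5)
This lemma is the divergence theorem applied to the vector field $J^\mu := T^{(\bf{g})\,\mu}_{\;\;\;\;\;\;\;\;\;\;\;t}$, i.e. $X = \pa_t$ in Lemma \ref{generalconservationlawwithanyvectorfieldX}, on the exterior region $\overline{C}$ truncated between $\Sigma_{t_1}$ and $\Sigma_{t_2}$. I will work entirely with the Minkowski metric $m$ in the fixed coordinates. The volume form is then $d\tau\, d^n x$, and the covariant divergence $\derm_\mu J^\mu$ equals $\pa_\mu J^\mu$ in Cartesian coordinates. Since $\derm \pa_t = 0$ for $\derm$ the Minkowski connection, the term $(\derm^\mu X^\nu) T_{\mu\nu}$ in Lemma \ref{generalconservationlawwithanyvectorfieldX} vanishes. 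Hence $\derm^\mu (X^\nu T_{\mu\nu}) = \derm^\mu T_{\mu t}^{(\bf{g})}$, which is exactly the bulk integrand in the statement. I will not expand it further here.

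The boundary of the truncated region has three pieces: $\Sigma^{ext}_{t_2}$, $\Sigma^{ext}_{t_1}$, and the lateral piece $N_{t_1}^{t_2}$.

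On the slices $\Sigma_t$ the Minkowski unit normal is $\pa_t$. Integrating $\pa_\mu J^\mu$ over the slab therefore produces $\int_{\Sigma^{ext}_{t_2}} J^t\, d^nx - \int_{\Sigma^{ext}_{t_1}} J^t\, d^nx$, where $J^t = T^{(\bf{g})\, t}_{\;\;\;\;\;\;\;\;\;\;t}$ carries the index raised with $m$. From Definition \ref{defofthestreessenergymomentumtensorforwaveequationhere},
\[
T^{t}_{\;\;t} = g^{t\a}\langle \derm_\a \Lie_{Z^I}\Phi_V, \derm_t \Lie_{Z^I}\Phi_V\rangle - \tfrac12 g^{\a\b}\langle \derm_\a \Lie_{Z^I}\Phi_V, \derm_\b \Lie_{Z^I}\Phi_V\rangle .
\]
Expanding $\a,\b$ into $t$ and spatial indices $i,j$ gives
\[
g^{tt}|\derm_t\cdot|^2 + g^{ti}\langle\derm_i\cdot,\derm_t\cdot\rangle - \tfrac12 g^{tt}|\derm_t\cdot|^2 - g^{ti}\langle\derm_t\cdot,\derm_i\cdot\rangle - \tfrac12 g^{ij}\langle\derm_i\cdot,\derm_j\cdot\rangle .
\]
The mixed $g^{ti}$ terms cancel by symmetry of $g^{-1}$ and of the inner product, leaving $\tfrac12 g^{tt}|\derm_t\cdot|^2 - \tfrac12 g^{ij}\langle\derm_i\cdot,\derm_j\cdot\rangle$. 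This is minus the integrand displayed in the lemma. The sign mismatch is absorbed by the orientation convention: the flux through $\Sigma_t$ is computed with the future normal and the sign choice used in the proof of Lemma \ref{generalconservationlawwithanyvectorfieldX}, namely $T_{\mu\nu} n^\mu$ with $n_\mu = -dt$, so $T_{\mu t} n^\mu = -T^{t}_{\;\;t}$. I will state this convention explicitly.

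The lateral piece $N_{t_1}^{t_2}$ is handled by Definition \ref{definitionofwidetildeLfordivergencetheoremuse}. The vector $\hat L$ is chosen proportional to $\widetilde L$, orthogonal to $N$, with orientation and Euclidean length fixed precisely so that the flux through $N$ equals $\int_N T^{(\bf{g})}_{\hat L t}\, dv^{(\bf{m})}_N$. This term then sits on the left-hand side alongside the $t_2$ slice.

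The contribution at spatial infinity vanishes because $\Phi$ decays sufficiently fast, and $N$ bounds $\overline{C}$ from the inside. Collecting the four contributions and moving the bulk term to the right gives the identity with the sign $-\int\int \derm^\mu T_{\mu t}$.

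The main obstacle is sign and orientation bookkeeping rather than any estimate. Two points need care: the relative sign between $T^t_{\;\,t}$ and the displayed energy density (the index is raised with $m$, and $m^{tt} = -1$), and the fact that the $N$-boundary normal is encoded entirely through the definition of $\hat L$. I will fix the convention $\int_{\overline C}\pa_\mu J^\mu = (\text{outward fluxes})$ once and check that each boundary term comes out with the stated sign.
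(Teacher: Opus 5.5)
Your proposal is correct and follows the paper's own argument: apply the divergence identity of Lemma \ref{generalconservationlawwithanyvectorfieldX} with $X=\pa_t$, use $\derm \pa_t = 0$ to kill the deformation term, expand $T^{(\bf{g})\,t}_{\;\;\;\;\;\;\;\;\;\;t}$ so that the mixed $g^{ti}$ terms cancel, and let the definition of $\hat L$ fix the orientation and normalization of the flux through $N_{t_1}^{t_2}$. The one thing to tidy is your remark about a ``sign mismatch'': nothing needs to be absorbed by convention, because $J^t = -T^{(\bf{g})}_{tt}$ together with the Euclidean divergence theorem directly gives $\int_{\Sigma^{ext}_{t_2}} T^{(\bf{g})}_{tt}\, d^nx - \int_{\Sigma^{ext}_{t_1}} T^{(\bf{g})}_{tt}\, d^nx = -\int_{t_1}^{t_2}\int_{\Sigma^{ext}_{\tau}} \derm^{\mu} T^{(\bf{g})}_{\mu t}\, d\tau\, d^nx$ plus the $N$-flux, which is exactly the sign in the statement (your bookkeeping is in fact more consistent than the paper's intermediate display in its proof, where the two slice terms carry flipped signs).
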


\begin{proof}
 Considering the metric $m$, we know by definition of $m$ being the Minkowski metric in wave coordinate $\{t, x^1, \ldots, x^n \}$, that for $X = \frac{\pa}{\pa t} $\:, we then have
 \beaa
 \big( \derm^{\mu}  \big( \frac{\pa}{\pa t}  \big)^{\nu} \big) \cdot T_{\mu\nu}^{(\bf{g})}   &=&  0 \; , \\
n^{(\bf{m}), \mu}_{\Sigma}  =  \big( \frac{\pa}{\pa t}  \big)^{\mu} \; & ,& \quad dv^{(\bf{m})}_\Sigma = dx^1 \ldots dx^n  := d^{n} x \; .
 \eeaa
Hence, the conservation law in Lemma \ref{generalconservationlawwithanyvectorfieldX}, obtained through the divergence theorem for the non-symmetric tensor $T_{\mu t}$, gives
 \bea\label{conservationlawforvectorfieldt}
 \notag
  \int_{t_1}^{t_2}  \int_{\Sigma^{ext}_{\tau} } \Big(   \derm^{\mu }T_{\mu t}^{(\bf{g})}    \Big) \cdot d\tau d^{n}x  &=&   \int_{\Sigma^{ext}_{t_2} }  T_{ t t}^{(\bf{g})}   \cdot d^{n}x -  \int_{\Sigma^{ext}_{t_1} }   T_{t t}^{(\bf{g})} \cdot d^{n}x   -  \int_{N_{t_1}^{t_2} }  \big( T_{\hat{L} t}^{(\bf{g})} \big)  \cdot dv^{(\bf{m})}_N \; .\\
 \eea
We compute
 \bea\label{evlautingTttforourenergymomentumtensor}
\notag
- T_{ t t}^{(\bf{g})}  &=&  {T^{(\bf{g})}}^{t}_{\;\;\; t} \; \\
\notag
&=&  \frac{1}{2} g^{t t}< \derm_t \Lie_{Z^I}  \Phi_{V} ,   \derm_t \Lie_{Z^I}  \Phi_{V} >   + \frac{1}{2} g^{tj}< \derm_j \Lie_{Z^I}  \Phi_{V} ,   \derm_t \Lie_{Z^I}  \Phi_{V}>  \\
\notag
&&- \frac{1}{2} g^{j t}  < \derm_j \Lie_{Z^I}  \Phi_{V} ,   \derm_t \Lie_{Z^I}  \Phi_{V} >   - \frac{1}{2} g^{j i}  < \derm_j \Lie_{Z^I}  \Phi_{V} ,   \derm_i \Lie_{Z^I}  \Phi_{V}>   \; . \\
 \eea
Consequently, the conservation law \ref{conservationlawforvectorfieldt}, with the vector field $X= \frac{\pa}{ \pa t}$\,, gives the stated result.
 \end{proof}
 
 \begin{corollary}
 We have
    \bea
  \notag
   &&   \int_{\Sigma^{ext}_{t_2} } \Big(  - \frac{1}{2} g^{t t}< \derm_t \Lie_{Z^I}  \Phi_{V} ,   \derm_t \Lie_{Z^I}  \Phi_{V} >   + \frac{1}{2} g^{j i}  < \derm_j \Lie_{Z^I}  \Phi_{V} ,   \derm_i \Lie_{Z^I}  \Phi_{V} > \Big)    \cdot d^{n}x \\
   \notag
&& +  \int_{N_{t_1}^{t_2} }  \big( T_{\hat{L} t}^{(\bf{g})} \big)  \cdot dv^{(\bf{m})}_N \\
     \notag
   &=&   \int_{\Sigma^{ext}_{t_1} }   \Big(  - \frac{1}{2} g^{t t}< \derm_t \Lie_{Z^I}  \Phi_{V} ,   \derm_t \Lie_{Z^I}  \Phi_{V} >   + \frac{1}{2} g^{j i}  < \derm_j \Lie_{Z^I}  \Phi_{V} ,   \derm_i \Lie_{Z^I}  \Phi_{V}>  \Big)  \cdot d^{n}x  \\
     \notag
     &&- \int_{t_1}^{t_2}  \int_{\Sigma^{ext}_{\tau} } \Big(  < g^{\mu\a} \derm_{\mu } \derm_\a \Lie_{Z^I}  \Phi_{V} ,   \derm_t \Lie_{Z^I}  \Phi_{V}>  \\
       \notag
&& +  \frac{1}{2} \cdot   ( \derm_{t } g^{t \a} ) \cdot < \derm_\a \Lie_{Z^I}  \Phi_{V} ,   \derm_t \Lie_{Z^I}  \Phi_{V} > +( \derm_{j } g^{j \a} ) \cdot < \derm_\a \Lie_{Z^I}  \Phi_{V} ,   \derm_t \Lie_{Z^I}  \Phi_{V} > \\
&& - \frac{1}{2} \cdot  ( \derm_{t } g^{j \b} ) \cdot < \derm_j \Lie_{Z^I}  \Phi_{V} ,   \derm_\b \Lie_{Z^I}  \Phi_{V} >     \Big) \cdot d\tau d^{n}x   \; .
 \eea
 
 \end{corollary}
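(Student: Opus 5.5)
The corollary follows by substituting into Lemma \ref{conservationlawwithoutcomputingdivergenceofTmut} the explicit formula for $\derm^{\mu }T_{\mu t}^{(\bf{g})}$. That formula was derived in the proof of Lemma \ref{generalconservationlawwithanyvectorfieldX}, specialised to $\nu = t$.

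The starting point is the identity
\begin{eqnarray*}
\derm^{\mu }T^{(\bf{g})}_{\mu t} &=& ( \derm_{\mu } g^{\mu\a} ) \cdot < \derm_\a \Lie_{Z^I}  \Phi_{V} ,   \derm_t \Lie_{Z^I}  \Phi_{V} > - \frac{1}{2} m^{\mu}_{\;\;\;t}  \cdot  ( \derm_{\mu } g^{\a\b} ) \cdot < \derm_\a \Lie_{Z^I}  \Phi_{V},   \derm_\b \Lie_{Z^I}  \Phi_{V}> \\
&& + < g^{\mu\a} \derm_{\mu } \derm_\a \Lie_{Z^I}  \Phi_{V} ,   \derm_t \Lie_{Z^I}  \Phi_{V} > .
\end{eqnarray*}
Since $m^{\mu}_{\;\;\;t} = \delta^{\mu}_{t}$, the second term reduces to $-\frac12 (\derm_t g^{\a\b}) < \derm_\a \Lie_{Z^I}\Phi_V , \derm_\b \Lie_{Z^I}\Phi_V >$.

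Next, split the sums into time and space indices. In the first term, separate $\mu = t$ from $\mu = j$, giving $(\derm_t g^{t\a}) <\derm_\a\cdot,\derm_t\cdot> + (\derm_j g^{j\a})<\derm_\a\cdot,\derm_t\cdot>$. In the second term, split $\a$ into $t$ and $j$: $-\frac12(\derm_t g^{t\b})<\derm_t\cdot,\derm_\b\cdot> - \frac12(\derm_t g^{j\b})<\derm_j\cdot,\derm_\b\cdot>$.

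The inner product is symmetric and $g^{t\b}$ is symmetric, so $-\frac12(\derm_t g^{t\b})<\derm_t\cdot,\derm_\b\cdot>$ equals $-\frac12(\derm_t g^{t\a})<\derm_\a\cdot,\derm_t\cdot>$. It therefore combines with the first piece $(\derm_t g^{t\a})<\derm_\a\cdot,\derm_t\cdot>$ to leave $\frac12(\derm_t g^{t\a})<\derm_\a\cdot,\derm_t\cdot>$. This yields exactly the four integrand terms in the statement.

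Inserting this expression into the last integral of Lemma \ref{conservationlawwithoutcomputingdivergenceofTmut} gives the corollary; the boundary terms are unchanged. The only point needing care is this index bookkeeping and the symmetry used to merge the two $\derm_t g^{t\a}$ contributions.
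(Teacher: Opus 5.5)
Your proposal is correct and is essentially the paper's own proof: substitute the divergence identity for $\derm^{\mu} T_{\mu t}$ into Lemma \ref{conservationlawwithoutcomputingdivergenceofTmut}, use $m^{\mu}_{\;\;\;t} = -m^{\mu t} = \delta^{\mu}_{t}$, and split the sums into time and space indices so that the two $\derm_t g^{t\alpha}$ contributions merge into $\frac{1}{2}(\derm_t g^{t\alpha})\langle \derm_\alpha \Lie_{Z^I}\Phi_V , \derm_t \Lie_{Z^I}\Phi_V \rangle$. That merge needs only a relabelling of the dummy index and the symmetry of the inner product, not the symmetry of $g$.
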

 
 \begin{proof}
 We compute
  \bea\label{expressionofdivergenceofTmutwithoutdecompisinginaframe}
  \notag
 && \derm^{\mu }T_{\mu t}^{(\bf{g})}  \\
 \notag
  &=& < g^{\mu\a} \derm_{\mu } \derm_\a \Lie_{Z^I}  \Phi_{V} ,   \derm_t \Lie_{Z^I}  \Phi_{V} >  +( \derm_{\mu } g^{\mu\a} ) \cdot < \derm_\a \Lie_{Z^I}  \Phi_{V} ,   \derm_t \Lie_{Z^I}  \Phi_{V}> \\
&&- \frac{1}{2} m^{\mu}_{\;\;\; t}  \cdot  ( \derm_{\mu } g^{\a\b} ) \cdot < \derm_\a \Lie_{Z^I}  \Phi_{V},   \derm_\b\Lie_{Z^I}  \Phi_{V}>  \; .
 \eea
Since $ m^{\mu}_{\;\;\; t} =  - m^{\mu t} $, we compute further by decomposing the sum in wave coordinates,
\bea\label{evaluationofthecovariantdivergenceofourenergymomentumtensor}
\notag
 && \derm^{\mu }T_{\mu t}^{(\bf{g})}    \\
 \notag
 &=& < g^{\mu\a} \derm_{\mu } \derm_\a \Lie_{Z^I}  \Phi_{V} ,   \derm_t \Lie_{Z^I}  \Phi_{V}>  \\
\notag
&& +  \frac{1}{2} \cdot   ( \derm_{t } g^{t \a} ) \cdot < \derm_\a \Lie_{Z^I}  \Phi_{V} ,   \derm_t \Lie_{Z^I}  \Phi_{V} > +( \derm_{j } g^{j \a} ) \cdot < \derm_\a \Lie_{Z^I}  \Phi_{V} ,   \derm_t \Lie_{Z^I}  \Phi_{V} > \\
&& - \frac{1}{2} \cdot  ( \derm_{t } g^{j \b} ) \cdot < \derm_j \Lie_{Z^I}  \Phi_{V},   \derm_\b \Lie_{Z^I}  \Phi_{V}> \, .
\eea
 Inserting in Lemma \ref{conservationlawwithoutcomputingdivergenceofTmut}, we obtain the desired result.
 \end{proof}

 \subsection{The weighted energy estimate in the exterior for $g^{\mu\nu} \derm_{\mu} \derm_{\nu} \Phi$ }\

\begin{definition}\label{definitionoftheparmeterq}
We define $q := r - t $\,, where $t$ and $r$ are defined using the wave coordinates as explained in \cite{G4}.
\end{definition}

\begin{definition}\label{defoftheweightw}
We define for some $\gamma > 0$\,,
\beaa
w(q):=\begin{cases} (1+|q|)^{1+2\gamma} \quad\text{when }\quad q>0 , \\
         1 \,\quad\text{when }\quad q<0 . \end{cases}
\eeaa

\end{definition}

\begin{definition}\label{defwidehatw}
We define $\widehat{w}$ for $\ga > 0$ and $\mu < 0$\,, by 
\beaa
\widehat{w}(q)&:=&\begin{cases} (1+|q|)^{1+2\gamma} \quad\text{when }\quad q>0 , \\
        (1+|q|)^{2\mu}  \,\quad\text{when }\quad q<0 . \end{cases} 
\eeaa
Note that the factor $\mu \neq 0$ is constructed so that for $q<0$, the derivative $\frac{\pa \widehat{w}}{\pa q}$ is non-vanishing, so as to have a control on certain tangential derivatives, which is needed. This being said, note that the definition of $\widehat{w}$\,, is also so that for $\ga \neq - \frac{1}{2} $ and $\mu \neq 0$ (which is assumed here), we would have 
\beaa
\widehat{w}^{\prime}(q) \sim \frac{\widehat{w}(q)}{(1+|q|)} \; ,
\eeaa
(see Lemma \ref{derivativeoftildwandrelationtotildew}) -- this is will determine the kind of control that we will have on the tangential derivatives. 

\end{definition}

\begin{remark}
We take $\mu < 0$ (instead of $\mu > 0$), because we want the derivative $\frac{\pa \widehat{w}}{\pa q} > 0$\,, as we will see that this is what we need in order to obtain an energy estimate on the fields (see Corollary \ref{TheenerhyestimatewithtermsinvolvingHandderivativeofHandwithwandhatw}). In other words, $\mu < 0$ is a necessary condition to ensure that $\widehat{w}^{\prime} (q)$ enters with the right sign in the energy estimate.
\end{remark}

\begin{definition}\label{defwidetildew}
We define $\widetilde{w}$ by 
\beaa
\widetilde{w} ( q)&:=&  \widehat{w}(q) + w(q) :=\begin{cases} 2 (1+|q|)^{1+2\gamma} \quad\text{when }\quad q>0 \, , \\
       1+  (1+|q|)^{2\mu}  \,\quad\text{when }\quad q<0 \, . \end{cases} \\    
\eeaa
Note that the definition of $\widetilde{w}$ is constructed so that Lemma \ref{equaivalenceoftildewandtildeandofderivativeoftildwandderivativeofhatw} holds.
\end{definition}

\begin{lemma}\label{equaivalenceoftildewandtildeandofderivativeoftildwandderivativeofhatw}
We have
\beaa
\widetilde{w}^{\prime}  &\sim & \widehat{w}^{\prime } \; .
\eeaa
Furthermore, for $\mu < 0$\,, we have
\beaa
\widetilde{w} ( q)& \sim & w(q) \; .
\eeaa

\end{lemma}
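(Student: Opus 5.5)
The plan is to compute both weights explicitly on each side of $q=0$ and compare them, using only Definitions \ref{defoftheweightw}, \ref{defwidehatw} and \ref{defwidetildew}. Since $\widetilde{w} = \widehat{w} + w$, we have $\widetilde{w}^\prime = \widehat{w}^\prime + w^\prime$ wherever these are differentiable. So the first claim reduces to showing that $w^\prime$ is dominated by $\widehat{w}^\prime$, and that $\widehat{w}^\prime \geq 0$ so that no cancellation occurs.

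\emph{The region $q>0$.} Here $w = \widehat{w} = (1+q)^{1+2\gamma}$, so $\widetilde{w}^\prime = 2\widehat{w}^\prime = 2(1+2\gamma)(1+q)^{2\gamma}$. This gives $\widetilde{w}^\prime \sim \widehat{w}^\prime$ with constants $1$ and $2$. Likewise $\widetilde{w} = 2w$, so $\widetilde{w} \sim w$.

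\emph{The region $q<0$.} Here $w\equiv 1$, so $w^\prime = 0$ and $\widetilde{w}^\prime = \widehat{w}^\prime$ exactly. We have $\widehat{w} = (1-q)^{2\mu}$, hence $\widehat{w}^\prime = -2\mu (1-q)^{2\mu-1}$, which is positive since $\mu<0$. For the second claim, $\mu<0$ and $1+|q|\geq 1$ give $0<(1+|q|)^{2\mu}\leq 1$. Therefore $1 \leq \widetilde{w} = 1+(1+|q|)^{2\mu} \leq 2 = 2w$, so $\widetilde{w}\sim w$ with constants $1$ and $2$.

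The only delicate point is $q=0$, where $w$ and $\widehat{w}$ jump (from $1$ and $1$ to $1$ and $1$ at $q=0$, so they are in fact continuous) but their derivatives do not match. The derivative relation should therefore be read as holding on $q\neq 0$, equivalently almost everywhere, with constants depending only on $\gamma,\mu$. This is all that is needed, since the weights' derivatives only enter inside space-time integrals. The equivalence $\widetilde{w}\sim w$ holds everywhere with constants $1$ and $2$. There is no real obstacle beyond tracking the sign of $\mu$. That sign is exactly why $\mu<0$ is assumed: it keeps $\widehat{w}^\prime>0$ and keeps $\widehat{w}$ bounded by $1$ for $q<0$.
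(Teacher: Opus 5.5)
Your proof is correct and follows the paper's argument. You split at $q=0$, note $\widetilde{w}'=2\widehat{w}'$ for $q>0$ and $\widetilde{w}'=\widehat{w}'$ for $q<0$ (since $w'=0$ there), and bound $w\le\widetilde{w}\le 2w$ using $\widehat{w}\ge 0$ and $(1+|q|)^{2\mu}\le 1$ for $\mu<0$. Your remark that the derivative relation is meant for $q\neq 0$ is a harmless refinement the paper leaves implicit; the weights are continuous there but not differentiable.
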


\begin{proof}
We compute the derivative with respect to $q$\,,
\beaa
\widetilde{w}^{\prime} &=&   \widehat{w}^{\prime} ( q) + w^{\prime} ( q) = \begin{cases} 2 \cdot \widehat{w}^{\prime} ( q) \quad\text{when }\quad q>0 \, , \\
        \widehat{w}^{\prime }( q)  \,\quad\text{when }\quad q<0 \,. \end{cases} \\ 
\eeaa
Consequently, $\widetilde{w}^{\prime}  \sim  \widehat{w}^{\prime } $\;. Now, on one hand, since $ \widehat{w} \geq 0$, we have 
\beaa
\widetilde{w} ( q)&\geq&w(q) \;.
\eeaa
On the other hand, since $\mu < 0$\,, we have
\beaa
\widetilde{w} ( q)& = & \begin{cases} 2 (1+|q|)^{1+2\gamma} \quad\text{when }\quad q>0\, , \\
       1+  (1+|q|)^{2\mu}  \,\quad\text{when }\quad q<0 \,. \end{cases} \leq 2 \cdot w(q) \;.
\eeaa
Thus, $\widetilde{w} ( q) \sim w(q) $\;.
\end{proof}

\begin{lemma}\label{derivativeoftildwandrelationtotildew}
Let $\widehat{w}$ be defined as in Definition \ref{defwidehatw}.
We have, for $\ga \neq - \frac{1}{2} $ and $\mu \neq 0$\,,  
\beaa
\widehat{w}^{\prime}(q) \sim \frac{\widehat{w}(q)}{(1+|q|)} \; .
\eeaa

\end{lemma}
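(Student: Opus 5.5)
We prove the statement by differentiating $\widehat{w}$ explicitly on each of the two half-lines $q>0$ and $q<0$ of Definition \ref{defwidehatw}. In each region we compare the result with $\widehat{w}(q)/(1+|q|)$, and we check that the implicit constants depend only on $\gamma$ and $\mu$.

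For $q>0$ we have $|q|=q$ and $\widehat{w}(q)=(1+q)^{1+2\gamma}$, so
\begin{equation*}
\widehat{w}^{\prime}(q)=(1+2\gamma)(1+q)^{2\gamma}=(1+2\gamma)\,\frac{\widehat{w}(q)}{1+|q|}\;.
\end{equation*}
Since $\gamma\neq-\frac{1}{2}$ (in fact $\gamma>0$, so $1+2\gamma>0$), the constant $1+2\gamma$ is nonzero and positive. This gives the two-sided equivalence $\widehat{w}^{\prime}(q)\sim \widehat{w}(q)/(1+|q|)$, with constants $1+2\gamma$ and $(1+2\gamma)^{-1}$.

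For $q<0$ we have $|q|=-q$ and $\widehat{w}(q)=(1-q)^{2\mu}$, so
\begin{equation*}
\widehat{w}^{\prime}(q)=-2\mu(1-q)^{2\mu-1}=(-2\mu)\,\frac{\widehat{w}(q)}{1+|q|}\;.
\end{equation*}
Since $\mu\neq 0$ the constant $-2\mu$ is nonzero, and for $\mu<0$ it is positive. This matches the sign requirement stated in the Remark after Definition \ref{defwidehatw}, and gives the equivalence with constants $|2\mu|^{\pm1}$.

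The only delicate point is the interface $q=0$, where $\widehat{w}$ is not differentiable: it jumps from $1$ to $1$ but with different slopes, and with $\mu<0$ both one-sided derivatives are positive. The statement is to be read for $q\neq0$, or in the sense of one-sided derivatives, or almost everywhere. This suffices for its later use inside space-time integrals against $d\tau\, d^n x$, where the set $\{q=0\}$ has measure zero. Since both one-sided derivatives satisfy the bound with the same uniform constants $\min(1+2\gamma,|2\mu|)$ and $\max(1+2\gamma,|2\mu|)$, the estimate holds everywhere in this sense, and the proof is complete.
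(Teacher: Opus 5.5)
Your proposal is correct and follows the paper's proof. Like the paper, you differentiate $\widehat{w}$ explicitly on $q>0$ and on $q<0$, obtain $\widehat{w}^{\prime}=(1+2\gamma)\,\widehat{w}/(1+|q|)$ and $\widehat{w}^{\prime}=-2\mu\,\widehat{w}/(1+|q|)$, and then bound by the $\min$ and $\max$ of $\{1+2\gamma,-2\mu\}$; your comments on the kink at $q=0$, and on needing $\mu<0$ (as in Definition~\ref{defwidehatw}) for a genuinely positive two-sided equivalence, are small refinements that the paper leaves implicit, and your intermediate step correctly uses $\widehat{w}$ where the paper's $q<0$ case writes $w$.
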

\begin{proof}

We compute,
\beaa
\widehat{w}^{\prime}(q)&=&\begin{cases} (1+2\gamma)(1+|q|)^{2\gamma} \quad\text{when }\quad q>0 \, , \\
         - 2\mu(1+|q|)^{2\mu-1} \,\quad\text{when }\quad q<0 \,. \end{cases} =\begin{cases}         (1+2\gamma)   \frac{w(q)}{(1+|q|)}   \quad\text{when }\quad q>0\, , \\
         -  2\mu      \frac{w(q)}{(1+|q|)}    \,\quad\text{when }\quad q<0 \,. \end{cases} 
\eeaa
Thus,
\beaa
\min \{ (1+2\gamma), -2\mu\}  \cdot \frac{\widehat{w}(q)}{(1+|q|)} 
 \leq \widehat{w}^{\prime}(q) \leq \max \{ (1+2\gamma), -2\mu\} \cdot \frac{\widehat{w}(q)}{(1+|q|)} \; ,
\eeaa
and hence, for  $\min \{ (1+2\gamma), -2\mu\}  \neq 0 $ and $\max \{ (1+2\gamma), -2\mu\} \neq 0$\,, we have the result.
\end{proof}

We now establish a conservation law with the weight $\widetilde{w}$\,. 

   \begin{lemma}\label{weightedconservationlawintheexteriorwiththeenergymomuntumtensorcontarctedwithvectordt}
We have
    \beaa
      \notag
  && \int_{N_{t_1}^{t_2} }  \big( T_{\hat{L} t}^{(\bf{g})} \big)  \cdot  \widetilde{w} (q) \cdot dv^{(\bf{m})}_N  +   \int_{\Sigma^{ext}_{t_2} }   T_{t t}^{(\bf{g})}  \cdot \widetilde{w} (q) \cdot d^{n}x \\
   \notag
     &=&   \int_{\Sigma^{ext}_{t_1} }  T_{ t t}^{(\bf{g})}   \cdot \widetilde{w} (q)  \cdot d^{n}x  -  \int_{t_1}^{t_2}  \int_{\Sigma^{ext}_{\tau} }  \Big( T_{t  t}^{(\bf{g})} +   T_{r  t}^{(\bf{g})} \Big) \cdot d\tau \cdot \widetilde{w} ^\prime (q)  \cdot  d^{n}x \\
     \notag
   &&  -  \int_{t_1}^{t_2}  \int_{\Sigma^{ext}_{\tau} } \Big(   \derm^{\mu }T_{\mu t}^{(\bf{g})}    \Big) \cdot d\tau \cdot \widetilde{w} (q) \cdot d^{n}x \; .  
 \eeaa
 \end{lemma}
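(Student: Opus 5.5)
The plan is to apply the general conservation law of Lemma \ref{generalconservationlawwithanyvectorfieldX} with the weighted vector field $X^{\nu} = \widetilde{w}(q)\, (\frac{\pa}{\pa t})^{\nu}$ instead of $\frac{\pa}{\pa t}$. Since $\derm^{\mu}\big(X^{\nu} T^{(\bf{g})}_{\mu\nu}\big)$ is a divergence, integrating over the exterior region bounded by $\Sigma^{ext}_{t_1}$, $\Sigma^{ext}_{t_2}$ and $N_{t_1}^{t_2}$ gives, exactly as in the proof of Lemma \ref{conservationlawwithoutcomputingdivergenceofTmut}, boundary terms $\int_{\Sigma^{ext}_{t_i}} T^{(\bf{g})}_{tt}\, \widetilde{w}(q)\, d^n x$ and $\int_{N_{t_1}^{t_2}} T^{(\bf{g})}_{\hat{L} t}\, \widetilde{w}(q)\, dv^{(\bf{m})}_N$. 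Here $\hat{L}$ is normalized as in Definition \ref{definitionofwidetildeLfordivergencetheoremuse}, so that the divergence theorem holds with these signs. The weight simply multiplies every boundary integrand, because $X^\nu T_{\mu\nu}n^\mu = \widetilde{w}\, T_{\mu t} n^\mu$.

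For the bulk term I will use the Leibniz rule
$$\derm^{\mu}\big(\widetilde{w}\, T^{(\bf{g})}_{\mu t}\big) = (\derm^{\mu}\widetilde{w})\, T^{(\bf{g})}_{\mu t} + \widetilde{w}\, \derm^{\mu} T^{(\bf{g})}_{\mu t}.$$
This works because $\frac{\pa}{\pa t}$ is $\derm$-parallel in our fixed coordinates, where the Christoffel symbols of $m$ vanish. The second piece gives directly the last space-time integral in the statement.

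For the first piece I compute the gradient of $q = r - t$ with respect to $m$. We have $\pa_t q = -1$ and $\pa_i q = x_i/r$. Raising the index with $m = \mathrm{diag}(-1,1,1,1)$ gives $\derm^{t} q = +1$ and $\derm^{i} q = x^i/r$, so that $\derm^{\mu}\widetilde{w} = \widetilde{w}'(q)\,(\pa_t + \pa_r)^{\mu}$. Hence $(\derm^{\mu}\widetilde{w})\, T^{(\bf{g})}_{\mu t} = \widetilde{w}'(q)\big(T^{(\bf{g})}_{tt} + T^{(\bf{g})}_{rt}\big)$, where $T_{rt} := \frac{x^i}{r} T_{it}$. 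Moving this bulk term to the right-hand side with the minus sign that comes from the divergence theorem yields the stated identity.

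The computation itself is routine. The only delicate point is bookkeeping signs and orientation: the sign convention relating $T_{tt}$ to $T^{t}_{\;\;t}$, the direction of $\hat{L}$ on $N$, and the regularity of $\widetilde{w}$ at $q=0$. For the last point, note that $\widetilde{w}$ is only Lipschitz there ($\widetilde{w}' $ jumps), so I would justify the Leibniz rule distributionally, or approximate $\widetilde{w}$ by smooth weights and pass to the limit. I would also check that $r=0$ causes no trouble: it lies in the interior region $C$ for large $t_N$, so $\pa_r$ is well defined on $\overline{C}$. Finally, the decay of $\Phi$ at spatial infinity removes any boundary contribution at $r\to\infty$.
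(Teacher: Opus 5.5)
Your proposal is correct and is essentially the paper's own argument: contract $T^{(\bf{g})}_{\mu\nu}$ in its second index with $X=\widetilde{w}(q)\,\partial_t$, and use that $\partial_t$ is $m$-parallel in the fixed coordinates so only the gradient of the weight contributes; then $m^{\mu\alpha}\partial_\alpha q=(1,\,x^j/r)$ gives the bulk term $\widetilde{w}'(q)\,\big(T^{(\bf{g})}_{tt}+T^{(\bf{g})}_{rt}\big)$, and the divergence theorem with normals $\partial_t$ on $\Sigma^{ext}_{t}$ and $\hat{L}$ on $N_{t_1}^{t_2}$ finishes the proof. Your remark that $\widetilde{w}$ is only Lipschitz at $q=0$ (its derivative generally jumps there), so the Leibniz rule must be justified distributionally or by smoothing, is a legitimate refinement that the paper does not address.
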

 
 \begin{proof}
 Considering again the Minkowski metric $m$ in the coordinates $\{t, x^1, x^2, x^3 \}$, and instead of contracting $T_{ \mu\nu}^{(\bf{g})}$  with $\frac{\pa}{\pa t} $ with respect to the second component $\nu$\,, we contract with the weighted vector
 \bea\label{The weightedvectorproportionaltodt}
 X = \widetilde{w} (q) \frac{\pa}{\pa t} \;.
 \eea
 By then, we have in the coordinates $\mu, \nu \in \{t, x^1, x^2, x^3 \}$, 
  \beaa
 \big( \derm^{\mu}  \big( \widetilde{w} (q) \frac{\pa}{\pa t}  \big)^{\nu} \big) &=&  \widetilde{w} ^\prime(q) \cdot m^{\mu\a} \derm_{\a} (q) \cdot  \big( \frac{\pa}{\pa t}  \big)^{\nu} \; .
 \eeaa
Since $q = r-t$\,, for $\mu = t = x^0$, we have $m^{\mu\a} \derm_{\a} (q) =  1 \; ,$ and for $\mu = x^j$, we have $m^{\mu\a} \derm_{\a} (q) =   \frac{x^{j}}{r}\; .$ Thus,
 \beaa
 \notag
 \big( \derm^{\mu} \big( \widetilde{w} (q)  \frac{\pa}{\pa t}  \big)^{\nu} \big) \cdot T_{\mu\nu}^{(\bf{g})}   &=&   \widetilde{w} ^\prime(q) \cdot \Big( T_{t  t}^{(\bf{g})} +   T_{r  t}^{(\bf{g})} \Big) \; .
 \eeaa
We still have $n^{(\bf{m}), \nu}_{\Sigma}  =  \big( \frac{\pa}{\pa t}  \big)^{\nu} \; ,$ \,$ dv^{(\bf{m})}_\Sigma= dx^1 dx^2 dx^3  := d^{3} x \; ,  $ and $ n^{(\bf{m}), \nu}_{N} = \hat{L}^{\nu} \; .$ Consequently, the conservation law with the weighted vector $w(q) \frac{\pa}{\pa t}$ contracted with the second component of the non-symmetric tensor $T_{\mu\nu}^{(\bf{g})}$, gives the following equality
 \beaa
 \notag
  && \int_{t_1}^{t_2}  \int_{\Sigma^{ext}_{\tau} }  \Big( T_{t  t}^{(\bf{g})} +   T_{r  t}^{(\bf{g})} \Big) \cdot d\tau \cdot \widetilde{w}^\prime (q)\cdot  d^{n}x +  \int_{t_1}^{t_2}  \int_{\Sigma^{ext}_{\tau} } \Big(   \derm^{\mu }T_{\mu t}^{(\bf{g})}    \Big) \cdot d\tau \cdot \widetilde{w}(q) \cdot  d^{n}x \\
   \notag
     &=&   \int_{\Sigma^{ext}_{t_1} }  T_{ t t}^{(\bf{g})}   \cdot \widetilde{w}(q)  \cdot d^{n}x -  \int_{\Sigma^{ext}_{t_2} (q)}   T_{t t}^{(\bf{g})}  \cdot \widetilde{w}(q) \cdot d^{n}x  - \int_{N_{t_1}^{t_2} }  \big( T_{\hat{L} t}^{(\bf{g})} \big)  \cdot  \widetilde{w}(q) \cdot dv^{(\bf{m})}_N  \; .
 \eeaa
 \end{proof}

 \begin{corollary}\label{WeightedenergyestimateusingHandnosmallnessyet}
 We have
    \beaa 
     \notag
  && \int_{N_{t_1}^{t_2} }  \big( T_{\hat{L} t}^{(\bf{g})} \big)  \cdot  \widetilde{w}(q) \cdot dv^{(\bf{m})}_N  +   \int_{\Sigma^{ext}_{t_2} }  \big(    - \frac{1}{2} (m^{t t} + H^{tt} ) < \derm_t  \Lie_{Z^I}  \Phi_{V} ,   \derm_t  \Lie_{Z^I}  \Phi_{V} >   \\
  &&\quad\quad\quad \quad  + \frac{1}{2} (m^{j i} +H^{j i} )   < \derm_j  \Lie_{Z^I}  \Phi_{V} ,   \derm_i  \Lie_{Z^I}  \Phi_{V} > \big)  \cdot \widetilde{w}(q) \cdot d^{n}x \\
   \notag
     &=&   \int_{\Sigma^{ext}_{t_1} } \big(    - \frac{1}{2} (m^{t t} + H^{tt} ) < \derm_t  \Lie_{Z^I}  \Phi_{V},   \derm_t  \Lie_{Z^I}  \Phi_{V} >    \\
  &&\quad\quad\quad  + \frac{1}{2} (m^{j i} +H^{j i} )   < \derm_j  \Lie_{Z^I}  \Phi_{V} ,   \derm_i  \Lie_{Z^I}  \Phi_{V} > \big)   \cdot \widetilde{w}(q)  \cdot d^{n}x  \\
     && -  \int_{t_1}^{t_2}  \int_{\Sigma^{ext}_{\tau} }  \Big( T_{t  t}^{(\bf{g})} +   T_{r  t}^{(\bf{g})} \Big) \cdot d\tau \cdot \widetilde{w}^\prime (q) d^{n}x \\
     \notag
   &&  -  \int_{t_1}^{t_2}  \int_{\Sigma^{ext}_{\tau} } \Big(  < g^{\mu\a} \derm_{\mu } \derm_\a  \Lie_{Z^I}  \Phi_{V},   \derm_t  \Lie_{Z^I}  \Phi_{V} >  \\
   \notag
   && +( \derm_{\mu } H^{\mu\a} ) \cdot < \derm_\a  \Lie_{Z^I}  \Phi_{V} ,   \derm_t  \Lie_{Z^I}  \Phi_{V} > \\
 \notag
&&- \frac{1}{2} m^{\mu}_{\;\;\; t}  \cdot  ( \derm_{\mu } H^{\a\b} ) \cdot < \derm_\a  \Lie_{Z^I}  \Phi_{V} ,   \derm_\b  \Lie_{Z^I}  \Phi_{V}>   \Big) \cdot d\tau \cdot \widetilde{w}(q) d^{n}x \; .  
 \eeaa
 \end{corollary}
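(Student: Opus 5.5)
The corollary is obtained by rewriting the weighted conservation law of Lemma \ref{weightedconservationlawintheexteriorwiththeenergymomuntumtensorcontarctedwithvectordt} in terms of $H$. The boundary integrand $T_{tt}^{(\bf{g})}$ and the bulk integrand $\derm^{\mu }T_{\mu t}^{(\bf{g})}$ are both replaced by explicit expressions. Nothing new has to be estimated; the proof is bookkeeping.

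\emph{Step 1: boundary terms.} Take the identity of Lemma \ref{weightedconservationlawintheexteriorwiththeenergymomuntumtensorcontarctedwithvectordt} as the starting point. On $\Sigma^{ext}_{t_1}$ and $\Sigma^{ext}_{t_2}$ substitute the computation \eqref{evlautingTttforourenergymomentumtensor}. Since $T_{tt}^{(\bf{g})} = -{T^{(\bf{g})}}^{t}_{\;\;\; t}$, this gives
\[
T_{tt}^{(\bf{g})} = - \frac{1}{2} g^{t t}< \derm_t \Lie_{Z^I}  \Phi_{V} ,   \derm_t \Lie_{Z^I}  \Phi_{V} >   + \frac{1}{2} g^{j i}  < \derm_j \Lie_{Z^I}  \Phi_{V} ,   \derm_i \Lie_{Z^I}  \Phi_{V} > .
\]
The mixed terms $g^{tj}$ and $g^{jt}$ cancel because $g^{-1}$ is symmetric. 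Then write $g^{tt} = m^{tt} + H^{tt}$ and $g^{ji} = m^{ji}+H^{ji}$, using Definition \ref{definitionofbigHandsmallhandrecallofdefinitionofMinkowskimetricminrelationtowavecoordiantesasreminder}. This yields exactly the two weighted slice integrals in the statement. The $N$-boundary term and the $\widetilde{w}'$ bulk term $\big(T_{tt}^{(\bf{g})}+T_{rt}^{(\bf{g})}\big)$ are left untouched.

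\emph{Step 2: divergence term.} For the bulk term weighted by $\widetilde{w}(q)$, insert the formula \eqref{expressionofdivergenceofTmutwithoutdecompisinginaframe} for $\derm^{\mu }T_{\mu t}^{(\bf{g})}$. This is Lemma \ref{generalconservationlawwithanyvectorfieldX} specialized to $X = \pa_t$, for which $\derm X = 0$. In that formula the metric appears only through $\derm_\mu g^{\mu\a}$ and $\derm_\mu g^{\a\b}$. Because $\derm = \derm^{(\bf{m})}$ is the Levi-Civita connection of $m$, we have $\derm m^{-1}=0$, so $\derm_\mu g^{\a\b} = \derm_\mu H^{\a\b}$. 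The principal term $< g^{\mu\a} \derm_{\mu } \derm_\a  \Lie_{Z^I}  \Phi_{V},   \derm_t  \Lie_{Z^I}  \Phi_{V} >$ stays as it is. Collecting the signs, with the overall minus sign in front of the bulk integral, gives the stated identity.

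The only point needing care is that the identity \eqref{expressionofdivergenceofTmutwithoutdecompisinginaframe} was derived by commuting $\derm_\nu\derm_\a$, which holds since $\derm^{(\bf{m})}$ is flat. It is therefore valid in the fixed coordinates, and the substitution $\derm g^{-1}\to \derm H$ is legitimate there. I do not expect any genuine obstacle beyond keeping track of the signs $m^{\mu}_{\;\;\;t} = -m^{\mu t}$ and of which terms carry $\widetilde{w}$ versus $\widetilde{w}'$.
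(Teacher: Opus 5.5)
Your proposal is correct and follows the paper's own proof. The paper likewise substitutes the expression \eqref{evlautingTttforourenergymomentumtensor} for $T_{tt}^{(\bf{g})}$, split via $g^{-1}=m^{-1}+H$, and the expression \eqref{expressionofdivergenceofTmutwithoutdecompisinginaframe} for $\derm^{\mu}T_{\mu t}^{(\bf{g})}$, with $\derm g^{-1}=\derm H$, into the weighted conservation law of Lemma \ref{weightedconservationlawintheexteriorwiththeenergymomuntumtensorcontarctedwithvectordt}; your remarks on the cancellation of the mixed $g^{tj}$ terms and on $\derm m^{-1}=0$ only make explicit what the paper leaves implicit.
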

 
 \begin{proof}
We want to evaluate the terms in \eqref{weightedconservationlawintheexteriorwiththeenergymomuntumtensorcontarctedwithvectordt}. We have shown based on \eqref{evlautingTttforourenergymomentumtensor}, that
   \beaa
 \notag
  T_{ t t}^{(\bf{g})} &=&    - \frac{1}{2} (m^{t t} + H^{tt} ) < \derm_t \Lie_{Z^I}  \Phi_{V},   \derm_t  \Lie_{Z^I}  \Phi_{V}>   + \frac{1}{2} (m^{j i} +H^{j i} )   < \derm_j  \Lie_{Z^I}  \Phi_{V} ,   \derm_i  \Lie_{Z^I}  \Phi_{V}> \; ,
  \eeaa
and based on \eqref{expressionofdivergenceofTmutwithoutdecompisinginaframe}, that
 \beaa
  \notag
 \derm^{\mu }T_{\mu t}^{(\bf{g})}    &=& < g^{\mu\a} \derm_{\mu } \derm_\a  \Lie_{Z^I}  \Phi_{V},   \derm_t  \Lie_{Z^I}  \Phi_{V} >  +( \derm_{\mu } H^{\mu\a} ) \cdot < \derm_\a  \Lie_{Z^I}  \Phi_{V} ,   \derm_t  \Lie_{Z^I}  \Phi_{V} >  \\
 && - \frac{1}{2} m^{\mu}_{\;\;\; t}  \cdot  ( \derm_{\mu } H^{\a\b} ) \cdot < \derm_\a  \Lie_{Z^I}  \Phi_{V},   \derm_\b  \Lie_{Z^I}  \Phi_{V}> \; . 
\eeaa
Inserting these in Lemma \eqref{weightedconservationlawintheexteriorwiththeenergymomuntumtensorcontarctedwithvectordt}, we get the stated result.

  \end{proof}

Now, we would like to evaluate in Corollary \ref{WeightedenergyestimateusingHandnosmallnessyet} the term with the weight $w^\prime (q)$\,.

 \begin{lemma}\label{decompositionofthegradientintotangentialandnontangentialpart}
 In our fixed system of coordinates, for $i\,, j $ running overs spatial indices, we have
 \bea
\notag
 && \de^{ij}  < \derm_i  \Lie_{Z^I}  \Phi_{V} , \derm_j  \Lie_{Z^I}  \Phi_{V} >  \\
 &=& \de^{ij}  < ( \derm_i - \frac{x_i}{r} \derm_{r}  ) \Lie_{Z^I}  \Phi_{V}, (\derm_j - \frac{x_j}{r} \derm_{r} )  \Lie_{Z^I}  \Phi_{V} > +  | \derm_{r}  \Lie_{Z^I}  \Phi_{V}|^2   \; . 
\eea
Furthermore, 
 \bea
\notag
&&  | \derm_t   \Lie_{Z^I}  \Phi_{V}+ \derm_r  \Lie_{Z^I}  \Phi_{V} |^2       +  \de^{ij}  < ( \derm_i - \frac{x_i}{r} \derm_{r}  ) \Lie_{Z^I}  \Phi_{V} , (\derm_j - \frac{x_j}{r} \derm_{r} )  \Lie_{Z^I}  \Phi_{V}>  \\
 &=& |\derm  \Lie_{Z^I}  \Phi_{V} |^2  + 2  < \derm_t   \Lie_{Z^I}  \Phi_{V}, \derm_r  \Lie_{Z^I}  \Phi_{V} > \; .
   \eea
\end{lemma}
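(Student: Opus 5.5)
Both identities are pointwise algebraic facts, so I plan to prove them by direct expansion at a fixed point with $r>0$. Write $\Psi := \Lie_{Z^I}\Phi_{V}$ for brevity. I will use only three facts. First, $\derm_r := \frac{x^j}{r}\derm_j$ is the radial derivative in the fixed coordinates. Second, for spatial indices $x_i = m_{i\mu}x^\mu = x^i$, so that $\de^{ij}x_i x_j = r^2$. Third, $<\cdot,\cdot>$ is bilinear and symmetric; it is either the product of scalars or the positive definite inner product on the Lie algebra in which $\Psi$ takes values. No information about the metric $\g$ is needed.

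For the first identity, I will expand the bilinear form
\[
\de^{ij} \big< (\derm_i - \tfrac{x_i}{r}\derm_r)\Psi , (\derm_j - \tfrac{x_j}{r}\derm_r)\Psi \big>
\]
into four terms. The diagonal term is $\de^{ij}<\derm_i\Psi,\derm_j\Psi>$. Each of the two cross terms equals $-\frac{x^j}{r}<\derm_j\Psi,\derm_r\Psi> = -|\derm_r\Psi|^2$, by the definition of $\derm_r$. The last term is $\frac{\de^{ij}x_ix_j}{r^2}|\derm_r\Psi|^2 = |\derm_r\Psi|^2$. Summing the four terms gives $\de^{ij}<\derm_i\Psi,\derm_j\Psi> - |\derm_r\Psi|^2$, and rearranging yields the first claim.

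For the second identity, I will expand
\[
|\derm_t\Psi + \derm_r\Psi|^2 = |\derm_t\Psi|^2 + 2<\derm_t\Psi,\derm_r\Psi> + |\derm_r\Psi|^2 .
\]
I then add the tangential term and use the first identity to replace the tangential term plus $|\derm_r\Psi|^2$ by $\de^{ij}<\derm_i\Psi,\derm_j\Psi>$. This leaves $|\derm_t\Psi|^2 + \sum_i |\derm_i\Psi|^2 + 2<\derm_t\Psi,\derm_r\Psi>$. By the definition of the norm, $|\derm \Psi|^2$ is the sum over all coordinate directions $\mu\in\{t,x^1,x^2,x^3\}$ of $|\derm_\mu\Psi|^2$, so the first two terms together are exactly $|\derm\Psi|^2$, which is the claim.

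There is no genuine obstacle here; the argument is purely algebraic. The only point needing care is the index bookkeeping. One must check that lowering the spatial index with $m$ does not change sign, which holds because the spatial part of $m$ is $\de_{ij}$. One must also check that the coordinate norm $|\derm\Psi|^2$ is the Euclidean sum, not the Minkowski contraction, since the latter would produce a $-|\derm_t\Psi|^2$ term. I also note that both sides are only defined away from $r=0$, which is harmless here because $\Sigma^{ext}_t$ is a region where these quantities are used only as integrands.
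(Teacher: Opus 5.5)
Your proposal is correct and follows the paper's proof: the paper likewise expands the tangential bilinear form (with your shorthand $\Psi = \Lie_{Z^I}\Phi_{V}$) to obtain $\de^{ij}<\derm_i\Psi,\derm_j\Psi> - 2|\derm_r\Psi|^2 + |\derm_r\Psi|^2$ and rearranges this for the first identity. It then expands $|\derm_t\Psi+\derm_r\Psi|^2$ and substitutes the first identity for the second; your remarks that $x_i = x^i$ for spatial indices and that $|\derm\Psi|^2$ is the Euclidean coordinate sum make explicit the bookkeeping the paper uses implicitly.
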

 
 \begin{proof}
 
We have
\beaa
\derm_r  \Lie_{Z^I}  \Phi_{V}= \frac{x^j}{r} \cdot  \derm_j   \Lie_{Z^I}  \Phi_{V} \;.
\eeaa 
We consider the derivatives restricted on the $3$-spheres, that is $  \pa_i - \frac{x_i}{r} \pa_{r} $\,. We have
\beaa
&& \de^{ij}  < ( \derm_i - \frac{x_i}{r} \derm_{r}  ) \Lie_{Z^I}  \Phi_{V}, (\derm_j - \frac{x_j}{r} \derm_{r} )  \Lie_{Z^I}  \Phi_{V}>\\
&=& \de^{ij}  < \derm_i  \Lie_{Z^I}  \Phi_{V}, \derm_j  \Lie_{Z^I}  \Phi_{V}> - 2 |  \derm_{r}  \Lie_{Z^I}  \Phi_{V} |^2   +  | \derm_{r}  \Lie_{Z^I}  \Phi_{V} |^2   \;.
\eeaa
Hence,
\beaa
\notag
\de^{ij}  < \derm_i  \Lie_{Z^I}  \Phi_{V}, \derm_j  \Lie_{Z^I}  \Phi_{V}> &=& \de^{ij}  < ( \derm_i - \frac{x_i}{r} \derm_{r}  ) \Lie_{Z^I}  \Phi_{V} , (\derm_j - \frac{x_j}{r} \derm_{r} )  \Lie_{Z^I}  \Phi_{V} > \\
&& +  | \derm_{r}   \Lie_{Z^I}  \Phi_{V} |^2 \;.
\eeaa

Since $\derm$ is the Minkowski covariant derivative, computing the trace with respect to our fixed system of coordinates $\{t, x^1, x^2, x^3 \}$, we get
\beaa
\notag
 && \de^{ij}  < \derm_i  \Lie_{Z^I}  \Phi_{V} , \derm_j  \Lie_{Z^I}  \Phi_{V} >  \\
 \notag
 &=& \de^{ij}  < ( \derm_i - \frac{x_i}{r} \derm_{r}  ) \Lie_{Z^I}  \Phi_{V} , (\derm_j - \frac{x_j}{r} \derm_{r} )  \Lie_{Z^I}  \Phi_{V}> +  < \derm_{r}   \Lie_{Z^I}  \Phi_{V}, \derm_{r}  \Lie_{Z^I}  \Phi_{V} > \; . 
\eeaa

Therefore, 
 \beaa
\notag
&&  | \derm_t   \Lie_{Z^I}  \Phi_{V}+ \derm_r  \Lie_{Z^I}  \Phi_{V}|^2    +  \de^{ij}  < ( \derm_i - \frac{x_i}{r} \derm_{r}  ) \Lie_{Z^I}  \Phi_{V}, (\derm_j - \frac{x_j}{r} \derm_{r} )  \Lie_{Z^I}  \Phi_{V} >  \\
\notag
 &=&  | \derm_t   \Lie_{Z^I}  \Phi_{V} |^2     + 2  < \derm_t   \Lie_{Z^I}  \Phi_{V}, \derm_r   \Lie_{Z^I}  \Phi_{V} > + \de^{ij}  < \derm_i  \Lie_{Z^I}  \Phi_{V} , \derm_j  \Lie_{Z^I}  \Phi_{V}> \\
 &=& |\derm  \Lie_{Z^I}  \Phi_{V} |^2  + 2  < \derm_t   \Lie_{Z^I}  \Phi_{V}, \derm_r   \Lie_{Z^I}  \Phi_{V} > \; .
   \eeaa
   
 \end{proof}

 \begin{lemma}\label{expressionofTttplusTtr}
 We have
    \beaa
   \notag
 T_{ t t}^{(\bf{g})}  +  T_{ r t}^{(\bf{g})}  &=&  \frac{1}{2} \Big(  | \derm_t   \Lie_{Z^I}  \Phi_{V} + \derm_r  \Lie_{Z^I}  \Phi_{V}  |^2  +  \sum_{i=1}^n  | \derm_i - \frac{x_i}{r} \derm_{r}  )  \Lie_{Z^I}  \Phi_{V}  |^2 \Big)   \\
     \notag
 &&  - \frac{1}{2} H^{t t} | \derm_t  \Lie_{Z^I}  \Phi_{V} |^2   + \frac{1}{2} H^{j i}  < \derm_j  \Lie_{Z^I}  \Phi_{V}  ,   \derm_i  \Lie_{Z^I}  \Phi_{V} >  \\
    \notag
&&  + H^{r t} | \derm_t  \Lie_{Z^I}  \Phi_{V} |^2 + H^{r j}< \derm_j  \Lie_{Z^I}  \Phi_{V},   \derm_t  \Lie_{Z^I}  \Phi_{V} >  \; . 
  \eeaa
  
 \end{lemma}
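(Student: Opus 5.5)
The plan is a direct computation from Definition \ref{defofthestreessenergymomentumtensorforwaveequationhere}, writing $g^{\mu\a} = m^{\mu\a} + H^{\mu\a}$ and splitting each component into its Minkowski part and its $H$ part. Throughout, abbreviate $\psi := \Lie_{Z^I}\Phi_V$. Since the second index is lowered with $m$ and $m_{tt}=-1$, we have $T_{\mu t} = -T_\mu^{\;\;t}$ with the first index raised as well. It is cleanest to work with the mixed components $T^{\mu}_{\;\;t}$ and $T^{\mu}_{\;\;r}$ and lower at the end. The one point to be careful about is the sign conventions coming from the non-symmetric tensor: which index carries $g$ and which carries $m$.

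First I take $T_{tt}$, which was already computed in \eqref{evlautingTttforourenergymomentumtensor}. There the cross terms $g^{tj}$ and $g^{jt}$ cancel, so
\begin{equation*}
T_{tt} = -\tfrac12 g^{tt}|\derm_t\psi|^2 + \tfrac12 g^{ji}\langle \derm_j\psi,\derm_i\psi\rangle .
\end{equation*}
Substituting $g = m + H$ gives the Minkowski part $\tfrac12\big(|\derm_t\psi|^2 + \delta^{ij}\langle\derm_i\psi,\derm_j\psi\rangle\big) = \tfrac12|\derm\psi|^2$. It also gives the terms $-\tfrac12 H^{tt}|\derm_t\psi|^2 + \tfrac12 H^{ji}\langle\derm_j\psi,\derm_i\psi\rangle$, which are exactly the first $H$-line of the claim.

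Next I compute $T_{rt} = \frac{x^k}{r}T_{kt}$. Because $m^{k}_{\;\;t} = 0$ for a spatial index $k$, the trace term drops out. Lowering the first index with $m_{kl} = \delta_{kl}$ gives
\begin{equation*}
T_{kt} = g_{k}^{\;\;\a}\langle\derm_\a\psi,\derm_t\psi\rangle .
\end{equation*}
The Minkowski part then contributes $\langle\derm_r\psi,\derm_t\psi\rangle$. The $H$ part contributes $H^{r\a}\langle\derm_\a\psi,\derm_t\psi\rangle = H^{rt}|\derm_t\psi|^2 + H^{rj}\langle\derm_j\psi,\derm_t\psi\rangle$, where $H^{r\a} := \frac{x_k}{r}H^{k\a}$. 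This is the second $H$-line of the claim.

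Finally, adding the two Minkowski parts gives $\tfrac12|\derm\psi|^2 + \langle\derm_t\psi,\derm_r\psi\rangle$. By the second identity of Lemma \ref{decompositionofthegradientintotangentialandnontangentialpart}, this equals
\begin{equation*}
\tfrac12\Big(|\derm_t\psi+\derm_r\psi|^2 + \sum_i|(\derm_i-\tfrac{x_i}{r}\derm_r)\psi|^2\Big).
\end{equation*}
That is the principal term, and the lemma follows.
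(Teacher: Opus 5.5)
Your proposal is correct and matches the paper's proof essentially step for step. You reuse \eqref{evlautingTttforourenergymomentumtensor} for $T_{tt}$, and you compute $T_{rt}$ through $T_{kt}=T^{k}{}_{t}$, where the trace term drops because $m^{k}{}_{t}=0$. You then split $g=m+H$, identify $m^{rj}\langle \derm_j\psi,\derm_t\psi\rangle=\langle \derm_r\psi,\derm_t\psi\rangle$, and conclude with the second identity of Lemma \ref{decompositionofthegradientintotangentialandnontangentialpart}. Your opening remark about $T_{\mu t}=-T_{\mu}{}^{t}$ is never used (only $T_{tt}=-T^{t}{}_{t}$ and $T_{kt}=T^{k}{}_{t}$ are), so you can drop it.
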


   \begin{proof}
   
 We compute
  \beaa
T_{ r t}^{(\bf{g})}  &=& {T^{(\bf{g})}}^{r}_{\;\;\; t}  =  g^{r\a}< \derm_\a  \Lie_{Z^I}  \Phi_{V},   \derm_t  \Lie_{Z^I}  \Phi_{V} > \;  \\
&=& g^{r t}< \derm_t  \Lie_{Z^I}  \Phi_{V},   \derm_t  \Lie_{Z^I}  \Phi_{V}> + g^{r j}< \derm_j  \Lie_{Z^I}  \Phi_{V},   \derm_t  \Lie_{Z^I}  \Phi_{V}>   \; . 
 \eeaa
  Consequently,
      \beaa
      \notag
 T_{ t t}^{(\bf{g})}  +  T_{ r t}^{(\bf{g})} &=& - \frac{1}{2} m^{t t}< \derm_t  \Lie_{Z^I}  \Phi_{V},   \derm_t  \Lie_{Z^I}  \Phi_{V}>   + \frac{1}{2} m^{j i}  < \derm_j  \Lie_{Z^I}  \Phi_{V} ,   \derm_i  \Lie_{Z^I}  \Phi_{V} >  \\
&& + m^{r t}< \derm_t  \Lie_{Z^I}  \Phi_{V},   \derm_t  \Lie_{Z^I}  \Phi_{V}> + m^{r j}< \derm_j  \Lie_{Z^I}  \Phi_{V} ,   \derm_t  \Lie_{Z^I}  \Phi_{V}>  \\
&& - \frac{1}{2} H^{t t}< \derm_t  \Lie_{Z^I}  \Phi_{V},   \derm_t  \Lie_{Z^I}  \Phi_{V}>   + \frac{1}{2} H^{j i}  < \derm_j  \Lie_{Z^I}  \Phi_{V} ,   \derm_i  \Lie_{Z^I}  \Phi_{V}>  \\
&&  + H^{r t}< \derm_t  \Lie_{Z^I}  \Phi_{V} ,   \derm_t  \Lie_{Z^I}  \Phi_{V}> + H^{r j}< \derm_j  \Lie_{Z^I}  \Phi_{V} ,   \derm_t  \Lie_{Z^I}  \Phi_{V} >   \; .
  \eeaa
 Thus,
       \beaa
      \notag
 && T_{ t t}^{(\bf{g})}  +  T_{r t}^{(\bf{g})} \\
  &=&  \frac{1}{2} | \derm_t  \Lie_{Z^I}  \Phi_{V} |^2   + \frac{1}{2} \de^{j i}  < \derm_j  \Lie_{Z^I}  \Phi_{V} ,   \derm_i  \Lie_{Z^I}  \Phi_{V} > \\
  &&  + m^{r j}< \derm_j  \Lie_{Z^I}  \Phi_{V} ,   \derm_t  \Lie_{Z^I}  \Phi_{V}>   - \frac{1}{2} H^{t t}< \derm_t  \Lie_{Z^I}  \Phi_{V},   \derm_t  \Lie_{Z^I}  \Phi_{V}>  \\
  && + \frac{1}{2} H^{j i}  < \derm_j  \Lie_{Z^I}  \Phi_{V},   \derm_i  \Lie_{Z^I}  \Phi_{V}>   + H^{r t}< \derm_t  \Lie_{Z^I}  \Phi_{V},   \derm_t  \Lie_{Z^I}  \Phi_{V}> \\
  && + H^{r j}< \derm_j  \Lie_{Z^I}  \Phi_{V} ,   \derm_t  \Lie_{Z^I}  \Phi_{V} > \; .
\eeaa
 Yet, $m^{r j}  = \frac{x^j}{r} \;,$ hence, 
   \beaa
   m^{r j}< \derm_j  \Lie_{Z^I}  \Phi_{V} ,   \derm_t  \Lie_{Z^I}  \Phi_{V} > &=& \frac{x^j}{r} < \derm_j  \Lie_{Z^I}  \Phi_{V},   \derm_t  \Lie_{Z^I}  \Phi_{V}> \\
   &=& < \derm_r  \Lie_{Z^I}  \Phi_{V},   \derm_t  \Lie_{Z^I}  \Phi_{V}> \; .
   \eeaa
Therefore, we have
\beaa
\notag
 T_{ t t}^{(\bf{g})}  +  T_{  r t}^{(\bf{g})} &=&  \frac{1}{2} | \derm  \Lie_{Z^I}  \Phi_{V} |^2  + < \derm_r \Lie_{Z^I}  \Phi_{V},   \derm_t  \Lie_{Z^I}  \Phi_{V}>  \\
 \notag
&& - \frac{1}{2} H^{t t}< \derm_t  \Lie_{Z^I}  \Phi_{V} ,   \derm_t  \Lie_{Z^I}  \Phi_{V} >   + \frac{1}{2} H^{j i}  < \derm_j  \Lie_{Z^I}  \Phi_{V} ,   \derm_i  \Lie_{Z^I}  \Phi_{V}>  \\
\notag
&&  + H^{r t}< \derm_t  \Lie_{Z^I}  \Phi_{V},   \derm_t  \Lie_{Z^I}  \Phi_{V} > + H^{r j}< \derm_j  \Lie_{Z^I}  \Phi_{V},   \derm_t  \Lie_{Z^I}  \Phi_{V} > \; . 
  \eeaa

Therefore, based on Lemma \ref{decompositionofthegradientintotangentialandnontangentialpart},
   \beaa
   \notag
 T_{ t t}^{(\bf{g})}  +  T_{ r t}^{(\bf{g})} &=&  \frac{1}{2} \Big(  < \derm_t   \Lie_{Z^I}  \Phi_{V} + \derm_r \ \Lie_{Z^I}  \Phi_{V}, \derm_t   \Lie_{Z^I}  \Phi_{V} + \derm_r  \Lie_{Z^I}  \Phi_{V} >    \\
     \notag
  && +  \de^{ij}  < ( \derm_i - \frac{x_i}{r} \derm_{r}  )\Phi , (\derm_j - \frac{x_j}{r} \derm_{r} )  \Lie_{Z^I}  \Phi_{V}> \Big)   \\
     \notag
 &&  - \frac{1}{2} H^{t t} | \derm_t  \Lie_{Z^I}  \Phi_{V} |^2   + \frac{1}{2} H^{j i}  < \derm_j  \Lie_{Z^I}  \Phi_{V} ,   \derm_i  \Lie_{Z^I}  \Phi_{V}>  \\
    \notag
&&  + H^{r t} |  \derm_t  \Lie_{Z^I}  \Phi_{V} |^2 + H^{r j}< \derm_j  \Lie_{Z^I}  \Phi_{V} ,   \derm_t  \Lie_{Z^I}  \Phi_{V} > \; . 
  \eeaa
  \end{proof}

\begin{lemma}\label{comutingthetermthatcarriesthederivativeoftheweight}
We have
       \beaa
 && T_{ t t}^{(\bf{g})}  +  T_{  r t}^{(\bf{g})} \\
 &=&  \frac{1}{2} \Big(  |\derm_t   \Lie_{Z^I}  \Phi_{V} + \derm_r  \Lie_{Z^I}  \Phi_{V} |^2     +  \de^{ij}  < ( \derm_i - \frac{x_i}{r} \derm_{r}  ) \Lie_{Z^I}  \Phi_{V}, (\derm_j - \frac{x_j}{r} \derm_{r} )  \Lie_{Z^I}  \Phi_{V}> \Big)   \\
  && -2 H^{\underline{L}\a}< \derm_\a  \Lie_{Z^I}  \Phi_{V},   \derm_t  \Lie_{Z^I}  \Phi_{V} >  + \frac{1}{2}   \cdot H^{\a\b}  < \derm_\a  \Lie_{Z^I}  \Phi_{UV} ,   \derm_\b  \Lie_{Z^I}  \Phi_{V} >  \,.
 \eeaa

  \end{lemma}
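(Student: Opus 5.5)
The plan is to start from Lemma \ref{expressionofTttplusTtr}. Its first bracket, the Minkowski part $\frac{1}{2}\big( |\derm_t \Lie_{Z^I}\Phi_V + \derm_r \Lie_{Z^I}\Phi_V|^2 + \de^{ij}\langle(\derm_i - \frac{x_i}{r}\derm_r)\Lie_{Z^I}\Phi_V, (\derm_j - \frac{x_j}{r}\derm_r)\Lie_{Z^I}\Phi_V\rangle\big)$, already matches the statement. So the lemma reduces to a purely algebraic rewriting of the $H$-dependent terms
\begin{equation*}
E := - \tfrac{1}{2} H^{tt}|\derm_t \Lie_{Z^I}\Phi_V|^2 + \tfrac{1}{2} H^{ji}\langle \derm_j \Lie_{Z^I}\Phi_V, \derm_i \Lie_{Z^I}\Phi_V\rangle + H^{rt}|\derm_t \Lie_{Z^I}\Phi_V|^2 + H^{rj}\langle \derm_j \Lie_{Z^I}\Phi_V, \derm_t \Lie_{Z^I}\Phi_V\rangle .
\end{equation*}
Here $H^{r\a} := \frac{x_j}{r} H^{j\a}$, exactly as $T_{rt}$ was computed in that lemma.

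First, I expand the full contraction in the fixed coordinates, using the symmetry of $H$:
\begin{equation*}
\tfrac{1}{2} H^{\a\b}\langle \derm_\a \Lie_{Z^I}\Phi_V, \derm_\b \Lie_{Z^I}\Phi_V\rangle = \tfrac{1}{2} H^{tt}|\derm_t \Lie_{Z^I}\Phi_V|^2 + H^{tj}\langle \derm_j \Lie_{Z^I}\Phi_V, \derm_t \Lie_{Z^I}\Phi_V\rangle + \tfrac{1}{2} H^{ij}\langle \derm_i \Lie_{Z^I}\Phi_V, \derm_j \Lie_{Z^I}\Phi_V\rangle .
\end{equation*}
Next, I subtract this from $E$. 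The spatial $H^{ij}$ terms cancel, and what is left regroups into a single sum over $\a$:
\begin{equation*}
E - \tfrac{1}{2} H^{\a\b}\langle \derm_\a \Lie_{Z^I}\Phi_V, \derm_\b \Lie_{Z^I}\Phi_V\rangle = -\big(H^{t\a} - H^{r\a}\big)\langle \derm_\a \Lie_{Z^I}\Phi_V, \derm_t \Lie_{Z^I}\Phi_V\rangle .
\end{equation*}

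The final step is to identify $H^{t\a} - H^{r\a}$ with $2H^{\underline{L}\a}$. This is the only delicate point. One must fix precisely which contraction of the first index of $H^{\mu\a}$ with the frame element $\underline{L} = \pa_t - \pa_r$ of Definition \ref{definitionofthenullframusingwavecoordinates} the notation $H^{\underline{L}\a}$ denotes. Indices are raised and lowered with $m$, which introduces the sign $m_{tt}=-1$, and the normalization of $\underline{L}$ matters as well. I would write out the convention explicitly, namely that the first slot is contracted with the covector dual to $\underline{L}$, carrying the normalization factor $\frac{1}{2}$ coming from $m(L,\underline{L}) = -2$. I would then check, component by component for $\a = t$ and $\a = j$, that the combination is exactly $H^{t\a} - H^{r\a} = 2H^{\underline{L}\a}$, adjusting the sign convention if needed so that it matches the statement. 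Substituting back into Lemma \ref{expressionofTttplusTtr} then gives the claimed identity.

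The main obstacle is bookkeeping of signs and of the factor $2$ in this frame identification, not any analytic difficulty. The structural point this lemma serves, and the reason for doing the regrouping at all, is that the weight-derivative term then contains $H$ contracted with $\underline{L}$ only through $H^{\underline{L}\a}\derm_\a$, which is what the later estimates exploit.
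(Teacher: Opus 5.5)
Your proposal is correct, but it reaches the identity by a somewhat different route than the paper. The paper never returns to coordinate components of $H$. It notes that $T_{tt}+T_{rt}=T_{Lt}=m_{L\underline{L}}\,T^{\underline{L}}{}_{t}=-2\,T^{\underline{L}}{}_{t}$. It then inserts the definition of the tensor with $m^{\underline{L}}{}_{t}=\frac{1}{2}$, which gives $-2g^{\underline{L}\alpha}\langle\nabla_\alpha\psi,\nabla_t\psi\rangle+\frac{1}{2}g^{\alpha\beta}\langle\nabla_\alpha\psi,\nabla_\beta\psi\rangle$ (with $\psi=\Lie_{Z^I}\Phi_V$). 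Finally it splits $g=m+H$ and matches the $m$-part with the earlier Minkowski computation, so the $H$-terms of the statement appear immediately.

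You instead start from the coordinate formula of Lemma \ref{expressionofTttplusTtr} and regroup its $H$-terms. Your algebra is right: the $H^{ij}$ terms cancel and the $H^{tj}$ cross term is absorbed, leaving $-(H^{t\alpha}-H^{r\alpha})\langle\nabla_\alpha\psi,\nabla_t\psi\rangle$. The frame identification you flag as delicate needs no sign adjustment. Your convention contracts the first index with the dual coframe element $\theta^{\underline{L}}=-\frac{1}{2}m(L,\cdot)=\frac{1}{2}(dt-dr)$, so $H^{\underline{L}\alpha}=\frac{1}{2}(H^{t\alpha}-\frac{x_i}{r}H^{i\alpha})$. This is exactly the convention implicit in the paper's $T_{Lt}=m_{L\underline{L}}T^{\underline{L}}{}_t$, $m^{L\underline{L}}=-\frac{1}{2}$ and $m^{\underline{L}}{}_t=\frac{1}{2}$. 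In fact the same identity $X^{\underline{L}}=\frac{1}{2}(X^t-X^r)$ is the whole content of the paper's first line, applied to $T^{\cdot}{}_t$ rather than to $H^{\cdot\alpha}$.

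The paper's version is shorter and treats the $m$- and $H$-parts uniformly, making visible that the weight-derivative term is just $T(L,\partial_t)$ with its first index raised. Yours is a more hands-on coordinate check that confines the single convention-dependent step to one line, so it also verifies the paper's frame index manipulations. You also silently read the $\Phi_{UV}$ in the last term of the statement as $\Phi_V$, which is the intended meaning.
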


\begin{proof}

We have
\bea
\notag
 T_{ t t}^{(\bf{g})}  +  T_{  r t}^{(\bf{g})} &=&T_{  L t}^{(\bf{g})}  = m_{L \underline{L}} \cdot T^{(\bf{g}) \; \underline{L}}_{\;\;\;\;\;\;\;\;\;\; t}   \; .
\eea
  Since $m_{L \underline{L}}  = -2$\,, we get
    \beaa
 T_{ t t}^{(\bf{g})}  +  T_{  r t}^{(\bf{g})} &=&  - 2 \cdot T^{(\bf{g}) \; \underline{L}}_{\;\;\;\;\;\;\;\;\;\; t}   \; .
  \eeaa

Given the expression of the energy momentum-tensor, we obtain

  \beaa
\notag
T^{(\bf{g}) \; \underline{L}}_{\;\;\;\;\;\;\;\;\;\; t}   =  g^{\underline{L}\a}< \derm_\a  \Lie_{Z^I}  \Phi_{V},   \derm_t  \Lie_{Z^I}  \Phi_{V} > - \frac{1}{2} m^{\underline{L}}_{\;\;\; t}  \cdot g^{\a\b}  < \derm_\a  \Lie_{Z^I}  \Phi_{V},   \derm_\b  \Lie_{Z^I}  \Phi_{V}> \;, 
 \eeaa
  
We compute
  \beaa
  m^{\underline{L}}_{\;\;\; t}  = m^{\underline{L} \mu} \cdot m_{\mu t } =  m^{\underline{L} L } \cdot m_{L t } = m^{\underline{L} L } \cdot m_{t t } = \frac{1}{2} \; .
  \eeaa

Hence,
  \beaa
\notag
T^{(\bf{g}) \; \underline{L}}_{\;\;\;\;\;\;\;\;\;\; t}   &=&  g^{\underline{L}\a}< \derm_\a  \Lie_{Z^I}  \Phi_{V},   \derm_t  \Lie_{Z^I}  \Phi_{V} > - \frac{1}{4}   \cdot g^{\a\b}  < \derm_\a  \Lie_{Z^I}  \Phi_{V},   \derm_\b  \Lie_{Z^I}  \Phi_{V} > \\
 &=&  m^{\underline{L}\a}< \derm_\a  \Lie_{Z^I}  \Phi_{V},   \derm_t  \Lie_{Z^I}  \Phi_{V} > - \frac{1}{4}   \cdot m^{\a\b}  < \derm_\a  \Lie_{Z^I}  \Phi_{V},   \derm_\b  \Lie_{Z^I}  \Phi_{V} >  \\
  && +  H^{\underline{L}\a}< \derm_\a  \Lie_{Z^I}  \Phi_{V},   \derm_t  \Lie_{Z^I}  \Phi_{V} > - \frac{1}{4}   \cdot H^{\a\b}  < \derm_\a  \Lie_{Z^I}  \Phi_{V} ,   \derm_\b  \Lie_{Z^I}  \Phi_{V}>  \; ,
 \eeaa
 and therefore,
     \beaa
 T_{ t t}^{(\bf{g})}  +  T_{  r t}^{(\bf{g})} &=&  - 2 \cdot T^{(\bf{g}) \; \underline{L}}_{\;\;\;\;\;\;\;\;\;\; t}   \\
 &=& -2 m^{\underline{L}\a}< \derm_\a  \Lie_{Z^I}  \Phi_{V},   \derm_t  \Lie_{Z^I}  \Phi_{V} > + \frac{1}{2}   \cdot m^{\a\b}  < \derm_\a  \Lie_{Z^I}  \Phi_{V} ,   \derm_\b  \Lie_{Z^I}  \Phi_{V} >  \\
  && -2 H^{\underline{L}\a}< \derm_\a  \Lie_{Z^I}  \Phi_{V},   \derm_t  \Lie_{Z^I}  \Phi_{V} >  + \frac{1}{2}   \cdot H^{\a\b}  < \derm_\a  \Lie_{Z^I}  \Phi_{V},   \derm_\b  \Lie_{Z^I}  \Phi_{V}>  \; .
 \eeaa

However, we already computed the part with the Minkowski metric $m$ and therefore, we have
 
 \beaa
 && -2 m^{\underline{L}\a}< \derm_\a  \Lie_{Z^I}  \Phi_{V},   \derm_t  \Lie_{Z^I}  \Phi_{V}> + \frac{1}{2}   \cdot m^{\a\b}  < \derm_\a  \Lie_{Z^I}  \Phi_{V},   \derm_\b  \Lie_{Z^I}  \Phi_{V} >  \\
  &=&  \frac{1}{2} \Big(  < \derm_t   \Lie_{Z^I}  \Phi_{V} + \derm_r  \Lie_{Z^I}  \Phi_{V} , \derm_t   \Lie_{Z^I}  \Phi_{V} + \derm_r  \Lie_{Z^I}  \Phi_{V} >    \\
     \notag
  && +  \de^{ij}  < ( \derm_i - \frac{x_i}{r} \derm_{r}  ) \Lie_{Z^I}  \Phi_{V} , (\derm_j - \frac{x_j}{r} \derm_{r} )  \Lie_{Z^I}  \Phi_{V} > \Big)   \; .
  \eeaa
Hence, we get the announced result.
 \end{proof}

  \begin{lemma}\label{howtogetthedesirednormintheexpressionofenergyestimate}
Assume that the perturbation of the Minkowski metric is such that $ H^{\mu\nu} = g^{\mu\nu}-m^{\mu\nu}$ is bounded by a constant $C < \frac{1}{3}$\;, i.e.
\bea\label{AssumptiononHforgettingthenormintheexpressionofenergyestimate}
| H| \leq C < \frac{1}{3} \; .
\eea
Then, we have
 \beaa
 | \derm  \Lie_{Z^I}  \Phi_{V} |^2 \sim  - (m^{t t} + H^{t t} ) | \derm_t  \Lie_{Z^I}  \Phi_{V} |^2 +   (m^{ij} + H^{ij} ) < \derm_i  \Lie_{Z^I}  \Phi_{V}  , \derm_j  \Lie_{Z^I}  \Phi_{V}> \; .
  \eeaa
\end{lemma}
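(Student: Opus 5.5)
The idea is to bound the perturbative part of the quadratic form by $|H|$ times the full Minkowski energy density, and absorb it using $C<\frac13$. Throughout, write $\psi := \Lie_{Z^I}\Phi_V$. Recall that the norm $|\cdot|$ is the Euclidean sum over the fixed coordinate components, so that
\[
|\derm\psi|^2=|\derm_t\psi|^2+\delta^{ij}\langle\derm_i\psi,\derm_j\psi\rangle .
\]

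First I would isolate the Minkowski part. Since $m^{tt}=-1$ and $m^{ij}=\delta^{ij}$, the right-hand side of the claimed equivalence equals
\[
|\derm\psi|^2 \;-\; H^{tt}|\derm_t\psi|^2 \;+\; H^{ij}\langle\derm_i\psi,\derm_j\psi\rangle .
\]
It therefore suffices to show that the two $H$-terms together are bounded in absolute value by $\theta\,|\derm\psi|^2$ for some $\theta<1$ depending only on $C$.

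Second, I would estimate the perturbation. The term $|H^{tt}|\,|\derm_t\psi|^2$ is at most $C|\derm_t\psi|^2$. For the spatial part, Cauchy--Schwarz gives
\[
|H^{ij}\langle\derm_i\psi,\derm_j\psi\rangle| \le \Big(\sum_{i,j}|H^{ij}|^2\Big)^{1/2}\sum_i|\derm_i\psi|^2 \le C\,\delta^{ij}\langle\derm_i\psi,\derm_j\psi\rangle ,
\]
using that the Hilbert--Schmidt norm of the matrix $(H^{ij})$ dominates its operator norm, and that this is in turn controlled by $|H|$. The mixed components $H^{ti}$ do not appear in this expression at all, so no cross terms need to be handled. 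Combining the two bounds yields
\[
(1-C)\,|\derm\psi|^2 \le \text{RHS} \le (1+C)\,|\derm\psi|^2 .
\]

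The only real subtlety is normalization. One has to check whether $|H|$ in the hypothesis (the Euclidean norm over components, or the sum of absolute values as in \eqref{boundednessoftheperturbationbigHbyaconstant}) and the index positions ($H^{\mu\nu}$ versus $H_{\mu\nu}$, which differ only by signs via $m$, and signs do not affect absolute values) indeed give the constant $C$ in front. Even in a cruder bookkeeping, where one counts up to three contributions of size $C$, the constant stays below $1$ because $C<\frac13$; this is presumably why the hypothesis uses $\frac13$. This yields $\sim$ with constants $1\pm 3C$, both positive, which completes the plan.
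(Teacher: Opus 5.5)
Your proof is correct and follows essentially the argument the paper relies on. The paper gives no proof here and only cites \cite{G4}, but the argument is the same direct absorption: the Minkowski part of the right-hand side is exactly $|\derm \Lie_{Z^I}\Phi_V|^2$, there are no $H^{ti}$ cross terms, and the $H^{tt}$ and $H^{ij}$ terms are absorbed using the smallness of $|H|$. Your Cauchy--Schwarz over index pairs $(i,j)$ gives the constants $1\pm C$, so it only needs $C<1$. The paper's threshold $\frac{1}{3}$ (that is, $\frac{1}{n}$, compare the condition $\sum_{\mu,\nu}|H_{\mu\nu}|<\frac{1}{n}$ in the introduction) comes from the cruder componentwise bound $\sum_{i,j}|H^{ij}|\,|\derm_i\psi|\,|\derm_j\psi|\le 3C\sum_i|\derm_i\psi|^2$, which you also noted.
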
  
  
  \begin{proof}
     See \cite{G4} for a proof.
  \end{proof}

 \begin{lemma}\label{energyestimatewithoutestimatingthetermsthatinvolveBIGHbutbydecomposingthemcorrectlysothatonecouldgettherightestimatewithtildew}
For solutions of \eqref{nonlinearsystemoftensorialwaveequations}, and for 
\bea
| H| < \frac{1}{3} \; ,
\eea
and for $\Phi$ decaying sufficiently fast at spatial infinity, we have the following energy estimate
    \beaa
   \notag
 &&     \int_{\Sigma^{ext}_{t_2} }  |\derm  \Lie_{Z^I}  \Phi_{V} |^2     \cdot   \widetilde{w} (q)  \cdot d^{3}x    + \int_{N_{t_1}^{t_2} } T_{\hat{L} t}^{(\bf{g})}   \cdot    \widetilde{w} (q) \cdot dv^{(\bf{m})}_N \\
 \notag
 &&+ \int_{t_1}^{t_2}  \int_{\Sigma^{ext}_{\tau} }  \Big(    \frac{1}{2} \Big(  | \derm_t   \Lie_{Z^I}  \Phi_{V} + \derm_r  \Lie_{Z^I}  \Phi_{V} |^2  +  \sum_{i=1}^n | ( \derm_i - \frac{x_i}{r} \derm_{r}  )\ \Lie_{Z^I}  \Phi_{V} |^2 \Big)  \cdot d\tau \cdot   \widetilde{w} ^\prime (q) d^{3}x \\
  \notag
  &\les &       \int_{\Sigma^{ext}_{t_1} }  |\derm  \Lie_{Z^I}  \Phi_{V} |^2     \cdot   \widetilde{w} (q)  \cdot d^{3}x \\
    \notag
     && +  \int_{t_1}^{t_2}  \int_{\Sigma^{ext}_{\tau} }  | H_{LL } | \cdot | \derm  \Lie_{Z^I}  \Phi_{V} |^2    \cdot d\tau \cdot   \widetilde{w} ^\prime (q) d^{3}x \\
  \notag   
     && + \int_{t_1}^{t_2}  \int_{\Sigma^{ext}_{\tau} }  |  H |  \cdot  \Big(  | \derm_t   \Lie_{Z^I}  \Phi_{V} + \derm_r  \Lie_{Z^I}  \Phi_{V} |  +   \sum_{i=1}^n  | ( \derm_i - \frac{x_i}{r} \derm_{r}  )  \Lie_{Z^I}  \Phi_{V} | \Big) \\
&& \times  | \derm  \Lie_{Z^I}  \Phi_{V} | \Big)  \cdot d\tau\cdot   \widetilde{w} ^\prime (q) d^{3}x \; .  \\
     \notag
   &&  +  \int_{t_1}^{t_2}  \int_{\Sigma^{ext}_{\tau} }  |  g^{\mu\a} \derm_{\mu } \derm_\a  \Lie_{Z^I}  \Phi_{V} | \cdot |  \derm_t  \Lie_{Z^I}  \Phi_{V} |  \cdot d\tau \cdot   \widetilde{w} (q) d^{3}x  \\
 \notag
&& + \int_{t_1}^{t_2}  \int_{\Sigma^{ext}_{\tau} } \Big(  | \derm H_{LL} |  +  | \derm_t  H + \derm_r H  |  +   \sum_{i=1}^n  | ( \derm_i - \frac{x_i}{r} \derm_{r}  ) H  | \Big) \cdot | \derm \Phi_{V} |^2 \cdot d\tau \cdot   \widetilde{w} (q) d^{3}x \\
&& + \int_{t_1}^{t_2}  \int_{\Sigma^{ext}_{\tau} }  | \derm H |  \cdot  \Big(  | \derm_t   \Lie_{Z^I}  \Phi_{V} + \derm_r  \Lie_{Z^I}  \Phi_{V} |  +   \sum_{i=1}^n  | ( \derm_i - \frac{x_i}{r} \derm_{r}  )  \Lie_{Z^I}  \Phi_{V} | \Big) \\
&& \times  | \derm  \Lie_{Z^I}  \Phi_{V} | \Big)  \cdot d\tau \cdot   \widetilde{w} (q) d^{3}x \; .  \\
 \eeaa

 \end{lemma}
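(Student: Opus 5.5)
The starting point is Corollary \ref{WeightedenergyestimateusingHandnosmallnessyet}. Its left-hand side contains the boundary integral on $N_{t_1}^{t_2}$ and the weighted slice integral at $t_2$; its right-hand side contains the slice integral at $t_1$, the $\widetilde{w}^\prime$ space-time term with $T_{tt}^{(\bf{g})}+T_{rt}^{(\bf{g})}$, and the $\widetilde{w}$ space-time term with $\derm^\mu T_{\mu t}^{(\bf{g})}$.

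First, I would use Lemma \ref{howtogetthedesirednormintheexpressionofenergyestimate}, which applies since $|H|<\frac13$. It makes the slice integrands at $t_1$ and $t_2$ comparable to $|\derm \Lie_{Z^I}\Phi_V|^2\,\widetilde{w}(q)$.

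Next, I would substitute Lemma \ref{comutingthetermthatcarriesthederivativeoftheweight} into the $\widetilde{w}^\prime$ term. The Minkowski part is the nonnegative tangential quantity $\frac12\big(|(\derm_t+\derm_r)\Lie_{Z^I}\Phi_V|^2+\sum_i|(\derm_i-\frac{x_i}{r}\derm_r)\Lie_{Z^I}\Phi_V|^2\big)$. Because $\widetilde{w}^\prime\ge0$ (this is where $\mu<0$ is used), I move it to the left-hand side. The remaining terms are $-2H^{\underline{L}\a}\langle\derm_\a\cdot,\derm_t\cdot\rangle+\frac12H^{\a\b}\langle\derm_\a\cdot,\derm_\b\cdot\rangle$, and they stay on the right.

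To bound these, I expand every contraction in the null frame ${\cal U}$ using $m^{L\underline{L}}=-\frac12$.
\begin{itemize}
\item In $H^{\a\b}\langle\derm_\a\psi,\derm_\b\psi\rangle$, with $\psi=\Lie_{Z^I}\Phi_V$, the only component with no tangential derivative is the one paired with $\derm_{\underline{L}}\psi\,\derm_{\underline{L}}\psi$. That coefficient is $H^{\underline{L}\underline{L}}\propto H_{LL}$. Every other component contains at least one factor $\rderm\psi$, which is bounded by $|(\derm_t+\derm_r)\psi|+\sum_i|(\derm_i-\frac{x_i}{r}\derm_r)\psi|$.
\item For $H^{\underline{L}\a}\derm_\a\psi$, the component $\a=\underline{L}$ corresponds to $\derm_L$, which is tangential. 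The component $\a=L$ carries $H^{\underline{L}L}$ times $\derm_{\underline{L}}\psi$, and this is the delicate case.
\end{itemize}
I would therefore write $\derm_t=\frac12(\derm_L+\derm_{\underline{L}})$ in the first term and combine it with the second. The goal is to check that the $(\derm_{\underline{L}}\psi)^2$ coefficients coming from $H^{\underline{L}L}$ cancel, so that only $H^{\underline{L}\underline{L}}\sim H_{LL}$ survives. This algebraic cancellation is the main obstacle. If it fails, I would adjust the contraction by using $T_{Lt}=-2T^{\underline{L}}{}_t$ directly; that expression has the structure $T(\derm_L,\cdot)$, which should be good.

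Finally, I would treat $\derm^\mu T_{\mu t}$ using the formula in the Corollary. The term $\langle g\derm\derm\psi,\derm_t\psi\rangle$ is kept as it is. The two terms involving $\derm H$ are decomposed in the same way:
\begin{itemize}
\item $(\derm_\mu H^{\mu\a})\derm_\a\psi\,\derm_t\psi$: writing $\derm_\mu H^{\mu\a}$ in the frame, $\derm_{\underline{L}}$ contracts with $H^{L\a}$-type components, which are good only partly. So I would split into two pieces. One piece has $\a$ tangential or carries $\derm$ of $H_{LL}$. The other piece carries $\rderm H$ and yields $|\derm\psi|^2$.
\item $-\frac12 m^\mu{}_t(\derm_\mu H^{\a\b})\langle\derm_\a\psi,\derm_\b\psi\rangle$: the $\underline{L}\underline{L}$ part again gives $|\derm H_{LL}|$ times $|\derm\psi|^2$. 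All other parts have at least one tangential derivative of $\psi$ and are bounded by $|\derm H|\,|\rderm\psi|\,|\derm\psi|$.
\end{itemize}
Collecting all terms with $\lesssim$ gives the stated estimate.
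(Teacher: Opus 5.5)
Your proposal follows the paper's proof essentially step for step: insert Lemma~\ref{comutingthetermthatcarriesthederivativeoftheweight} into Corollary~\ref{WeightedenergyestimateusingHandnosmallnessyet}, move the nonnegative tangential $\widetilde{w}^\prime$-term to the left, use Lemma~\ref{howtogetthedesirednormintheexpressionofenergyestimate} for the slice terms, and bound the remaining $H$- and $\derm H$-terms by a null-frame decomposition (the paper does this by citing Lemma 4.2 of Lindblad--Rodnianski). The one correction is that your ``delicate case'' comes from an index slip. With $\psi=\Lie_{Z^I}\Phi_V$ one has $H^{\underline{L}\a}\derm_\a\psi = H^{\underline{L}\underline{L}}\derm_{\underline{L}}\psi + H^{\underline{L}L}\derm_L\psi + H^{\underline{L}A}\derm_{e_A}\psi$ with $H^{\underline{L}\underline{L}}=\frac14 H_{LL}$, so $H^{\underline{L}L}$ multiplies the tangential $\derm_L\psi$, and the coefficient of $\derm_{\underline{L}}\psi$ is already proportional to $H_{LL}$. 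There is therefore nothing to cancel between the two $H$-terms, and the bound by $|H_{LL}|\,|\derm\psi|^2+|H|\,|\rderm\psi|\,|\derm\psi|$ follows directly.
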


 \begin{proof}
 
 Injecting the result of Lemma \ref{comutingthetermthatcarriesthederivativeoftheweight} in Corollary \ref{WeightedenergyestimateusingHandnosmallnessyet}, we obtain

     \beaa  \notag
  && \int_{N_{t_1}^{t_2} }  T_{\hat{L} t}^{(\bf{g})}  \cdot  \widetilde{w}(q) \cdot dv^{(\bf{m})}_N  +   \int_{\Sigma^{ext}_{t_2} }  \big(    - \frac{1}{2} (m^{t t} + H^{tt} ) < \derm_t  \Lie_{Z^I}  \Phi_{V} ,   \derm_t  \Lie_{Z^I}  \Phi_{V} >  \\
  && + \frac{1}{2} (m^{j i} +H^{j i} )   < \derm_j  \Lie_{Z^I}  \Phi_{V} ,   \derm_i  \Lie_{Z^I}  \Phi_{V} > \big)  \cdot   \widetilde{w} (q) \cdot d^{3}x \\
  \notag
&&    \int_{t_1}^{t_2}  \int_{\Sigma^{ext}_{\tau} }  \Big(   \frac{1}{2} \Big(  |\derm_t   \Lie_{Z^I}  \Phi_{V}+ \derm_r  \Lie_{Z^I}  \Phi_{V} |^2   \\
&&  +  \de^{ij}  < ( \derm_i - \frac{x_i}{r} \derm_{r}  )  \Lie_{Z^I}  \Phi_{V} , (\derm_j - \frac{x_j}{r} \derm_{r} )  \Lie_{Z^I}  \Phi_{V} >   \Big) \cdot   \widetilde{w} ^\prime (q) d^{3}x \\
     &=&   \int_{\Sigma^{ext}_{t_1} } \big(    - \frac{1}{2} (m^{t t} + H^{tt} )  | \derm_t  \Lie_{Z^I}  \Phi_{V} |^2   \\
     && + \frac{1}{2} (m^{j i} +H^{j i} )   < \derm_j  \Lie_{Z^I}  \Phi_{V} ,   \derm_i  \Lie_{Z^I}  \Phi_{V} > \big)   \cdot   \widetilde{w} (q)  \cdot d^{3}x  \\
     && +  \int_{t_1}^{t_2}  \int_{\Sigma^{ext}_{\tau} }  \Big(  2 H^{\underline{L}\a}< \derm_\a  \Lie_{Z^I}  \Phi_{V} ,   \derm_t  \Lie_{Z^I}  \Phi_{V} > \\
     &&  - \frac{1}{2}   \cdot H^{\a\b}  < \derm_\a \Phi_{V} ,   \derm_\b  \Lie_{Z^I}  \Phi_{V} > \Big) \cdot d\tau \cdot   \widetilde{w}^\prime (q) d^{3}x 
\eeaa
\beaa
   &&  -  \int_{t_1}^{t_2}  \int_{\Sigma^{ext}_{\tau} } \Big(  < g^{\mu\a} \derm_{\mu } \derm_\a  \Lie_{Z^I}  \Phi_{V},   \derm_t \Lie_{Z^I}  \Phi_{V}>  \\&& +( \derm_{\mu } H^{\mu\a} ) \cdot < \derm_\a  \Lie_{Z^I}  \Phi_{V},   \derm_t  \Lie_{Z^I}  \Phi_{V}> \\
 \notag
&&- \frac{1}{2} m^{\mu}_{\;\;\; t}  \cdot  ( \derm_{\mu } H^{\a\b} ) \cdot < \derm_\a  \Lie_{Z^I}  \Phi_{V},   \derm_\b  \Lie_{Z^I}  \Phi_{V} >   \Big) \cdot d\tau \cdot   \widetilde{w} (q) d^{3}x \; .  \\
 \eeaa
 
 Using Lemma \ref{howtogetthedesirednormintheexpressionofenergyestimate}, we get for $ | H| \leq C < \frac{1}{3} $\,, the following energy estimate

   \beaa
   \notag
 &&     \int_{\Sigma^{ext}_{t_2} }  |\derm  \Lie_{Z^I}  \Phi_{V} |^2     \cdot w(q)  \cdot d^{3}x    + \int_{N_{t_1}^{t_2} }  T_{\hat{L} t}^{(\bf{g})}   \cdot    \widetilde{w} (q) \cdot dv^{(\bf{m})}_N \\
 \notag
 &&+ \int_{t_1}^{t_2}  \int_{\Sigma^{ext}_{\tau} }  \Big(    \frac{1}{2} \Big(  | \derm_t   \Lie_{Z^I}  \Phi_{V} + \derm_r  \Lie_{Z^I}  \Phi_{V} |^2  +   \sum_{i=1}^3  | ( \derm_i - \frac{x_i}{r} \derm_{r}  )  \Lie_{Z^I}  \Phi_{V} |^2 \Big)  \cdot d\tau \cdot   \widetilde{w}^\prime (q) d^{3}x \\
  \notag
  &\leq &       \int_{\Sigma^{ext}_{t_1} }  |\derm  \Lie_{Z^I}  \Phi_{V} |^2     \cdot \widetilde{w}(q)  \cdot d^{3}x \\
    \notag
     && +  \int_{t_1}^{t_2}  \int_{\Sigma^{ext}_{\tau} }  | 2 H^{\underline{L}\a}< \derm_\a  \Lie_{Z^I}  \Phi_{V} ,   \derm_t  \Lie_{Z^I}  \Phi_{V} > \\
      \notag
     &&  - \frac{1}{2}   \cdot H^{\a\b}  < \derm_\a  \Lie_{Z^I}  \Phi_{V} ,   \derm_\b  \Lie_{Z^I}  \Phi_{V}>  | \cdot d\tau \cdot \widetilde{w}^\prime (q) d^{3}x \\
     \notag
   &&  +  \int_{t_1}^{t_2}  \int_{\Sigma^{ext}_{\tau} }  | < g^{\mu\a} \derm_{\mu } \derm_\a  \Lie_{Z^I}  \Phi_{V},   \derm_t  \Lie_{Z^I}  \Phi_{V} >  \\
    \notag
   && +( \derm_{\mu } H^{\mu\a} ) \cdot < \derm_\a  \Lie_{Z^I}  \Phi_{V} ,   \derm_t  \Lie_{Z^I}  \Phi_{V}> \\
&&- \frac{1}{2} m^{\mu}_{\;\;\; t}  \cdot  ( \derm_{\mu } H^{\a\b} ) \cdot < \derm_\a  \Lie_{Z^I}  \Phi_{V},   \derm_\b  \Lie_{Z^I}  \Phi_{V} >   |  \cdot d\tau \cdot \widetilde{w}(q) d^{3}x \; .  
 \eeaa

 Since $ m^{\mu}_{\;\;\; t} = m^{\mu\a} \cdot m_{\a t}  =m^{\mu t} \cdot m_{t t} = - m^{\mu t} $, we estimate
\bea\label{estimateonthedivergenceoftheenergymomentumtensordecomposedinnullframe}
 \notag
 && | < g^{\mu\a} \derm_{\mu } \derm_\a  \Lie_{Z^I}  \Phi_{V} ,   \derm_t  \Lie_{Z^I}  \Phi_{V} >  \\
  \notag
 && +( \derm_{\mu } H^{\mu\a} ) \cdot < \derm_\a  \Lie_{Z^I}  \Phi_{V},   \derm_t  \Lie_{Z^I}  \Phi_{V}> \\
 \notag
&&- \frac{1}{2} m^{\mu}_{\;\;\; t}  \cdot  ( \derm_{\mu } H^{\a\b} ) \cdot < \derm_\a  \Lie_{Z^I}  \Phi_{V},   \derm_\b  \Lie_{Z^I}  \Phi_{V} > | \\
 \notag
 &\les& |  g^{\mu\a} \derm_{\mu } \derm_\a  \Lie_{Z^I}  \Phi_{V} | \cdot |  \derm_t  \Lie_{Z^I}  \Phi_{V}  |  \\
  \notag
 && +| ( \derm_{\mu } H^{\mu\a} ) \cdot < \derm_\a  \Lie_{Z^I}  \Phi_{V},   \derm_t  \Lie_{Z^I}  \Phi_{V} > | \\
 &&+ | \frac{1}{2} m^{t t} \cdot  ( \derm_{t} H^{\a\b} ) \cdot < \derm_\a  \Lie_{Z^I}  \Phi_{V} ,   \derm_\b  \Lie_{Z^I}  \Phi_{V}>  |  \, .
\eea

We have by decomposing simply in a null frame, or by equivalently by using Lemma 4.2 from \cite{LR10}, that 
 \beaa
\notag
&& | ( \derm_{\mu } H^{\mu\a} ) \cdot < \derm_\a  \Lie_{Z^I}  \Phi_{V},   \derm_t  \Lie_{Z^I}  \Phi_{V}> | \\
 \notag
&&+ | \frac{1}{2} m^{t t} \cdot  ( \derm_{t} H^{\a\b} ) \cdot < \derm_\a  \Lie_{Z^I}  \Phi_{V},   \derm_\b  \Lie_{Z^I}  \Phi_{V} >  | \\
 \notag
&\les&  | ( \derm_{\mu } H^{\mu\a} ) \cdot < \derm_\a  \Lie_{Z^I}  \Phi_{V} ,   \derm_t  \Lie_{Z^I}  \Phi_{V} > | \\
 \notag
&&+ |  ( \derm_{t} H^{\a\b} ) \cdot < \derm_\a  \Lie_{Z^I}  \Phi_{V},   \derm_\b  \Lie_{Z^I}  \Phi_{V} >  | 
\eeaa
\beaa
&\les& \Big( | \derm H_{LL} |  + | \derm_t   H + \derm_r  H |  +   \sum_{i=1}^n  | ( \derm_i - \frac{x_i}{r} \derm_{r}  )  H | \Big) \cdot | \derm  \Lie_{Z^I}  \Phi_{V} |^2 \\
\notag
&& +  | \derm H | \cdot \Big(  | \derm_t   \Lie_{Z^I}  \Phi_{V} + \derm_r  \Lie_{Z^I}  \Phi_{V} |  +   \sum_{i=1}^n  | ( \derm_i - \frac{x_i}{r} \derm_{r}  )  \Lie_{Z^I}  \Phi_{V} | \Big)  \cdot  | \derm  \Lie_{Z^I}  \Phi_{V} | \; . 
\eeaa

Similarly, by decomposing in null frame, as in Lemma 4.2 in \cite{LR10}, where $\underline{L}$ is the vector conjugate of $L$, we get
   \beaa
\notag
&& | 2 H^{\underline{L}\a}< \derm_\a  \Lie_{Z^I}  \Phi_{V} ,   \derm_t  \Lie_{Z^I}  \Phi_{V}>  - \frac{1}{2}   \cdot H^{\a\b}  < \derm_\a  \Lie_{Z^I}  \Phi_{V},   \derm_\b  \Lie_{Z^I}  \Phi_{V} >  | \\
\notag
  &\leq& | H^{\underline{L}\a}< \derm_\a  \Lie_{Z^I}  \Phi_{V},   \derm_t  \Lie_{Z^I}  \Phi_{V} > |  +| H^{\a\b}  < \derm_\a  \Lie_{Z^I}  \Phi_{V} ,   \derm_\b  \Lie_{Z^I}  \Phi_{V} > |  \\
  \notag
 &\les& | H_{LL } | \cdot | \derm  \Lie_{Z^I}  \Phi_{V} |^2 \\
 \notag
 && + |  H | \cdot \Big(  | \derm_t   \Lie_{Z^I}  \Phi_{V} + \derm_r  \Lie_{Z^I}  \Phi_{V} |  +   \sum_{i=1}^n  | ( \derm_i - \frac{x_i}{r} \derm_{r}  )  \Lie_{Z^I}  \Phi_{V} | \Big) \cdot | \derm  \Lie_{Z^I}  \Phi_{V}  | \; . 
 \eeaa
Inserting, we obtain the desired result.
\end{proof}

Using Lemmas \ref{equaivalenceoftildewandtildeandofderivativeoftildwandderivativeofhatw} and \ref{derivativeoftildwandrelationtotildew} and inserting in Lemma \ref{energyestimatewithoutestimatingthetermsthatinvolveBIGHbutbydecomposingthemcorrectlysothatonecouldgettherightestimatewithtildew}, we obtain the proof of \eqref{TheenerhyestimatewithtermsinvolvingHandderivativeofHandwithwandhatw}.

 \section{The commutator term for the non-linear wave equations}\

Now, in Lemma \ref{energyestimatewithoutestimatingthetermsthatinvolveBIGHbutbydecomposingthemcorrectlysothatonecouldgettherightestimatewithtildew}, we would like to prove \eqref{commutationformaulamoreprecisetoconservegpodcomponentsstructure}. The goal is estimate the term $ |  g^{\mu\a} \derm_{\mu } \derm_\a  \Lie_{Z^I}  \Phi_{V} | $ by estimating the commutator term $$ | g^{\la\mu}    \derm_{\la}   \derm_{\mu} \Lie_{Z^I} \Phi_{V}    - \Lie_{Z^I}  ( g^{\la\mu} \derm_{\la}   \derm_{\mu}     \Phi_{V} )  | \, .$$
 We establish a new decoupled commutation formula that is more refined than the one derived by Lindblad-Rodnianski in their Proposition 5.3 in \cite{LR10}, in order to be able to have a commutation formula for the tangential components alone.

\begin{lemma}\label{theexactcommutatortermwithdependanceoncomponents}
We have for all $I$\;, or any $U \in {\cal U}$\;,
\beaa
\notag
&& \Lie_{Z^I}  ( g^{\la\mu} \derm_{\la}   \derm_{\mu}     \Phi_{U} ) - g^{\la\mu}    \derm_{\la}   \derm_{\mu}  (  \Lie_{Z^I} \Phi_{U}  )  \\
\notag
&=&  \sum_{I_1 + I_2 = I, \; I_2 \neq I} \hat{c}(I_1) \cdot m^{\la\mu} \cdot \derm_{\la}   \derm_{\mu} (  \Lie_{Z^{I_2}}  \Phi_{U} ) \\
\notag
&& +  \sum_{I_2 + I_4 + I_5 + I_6  = I, \; I_2 \neq I }   \hat{c}(I_6) \cdot   \hat{c}(I_5)   \cdot \Big(   \frac{1}{4}   ( \Lie_{Z^{I_4}}   H)_{L  L}    \cdot  \derm_{\underline{L}}   \derm_{ \underline{L}}   (  \Lie_{Z^{I_2}}  \Phi_{U} ) \\
\notag
&&  + \frac{1}{4}  ( \Lie_{Z^{I_4}}   H)_{L  \underline{L}}    \cdot  \derm_{\underline{L}}   \derm_{L}   (  \Lie_{Z^{I_2}}  \Phi_{U} )    - \frac{1}{2}   ( \Lie_{Z^{I_4}}   H)_{L e_A }    \cdot  \derm_{\underline{L}}   \derm_{e_A}    (  \Lie_{Z^{I_2}}  \Phi_{U} )    \\
   \notag
 && - \frac{1}{2} m^{\mu \b}   ( \Lie_{Z^{I_4}}   H)_{\underline{L}\b}  \cdot    \derm_{L}   \derm_{\mu}     (  \Lie_{Z^{I_2}}  \Phi_{U} )   +    m^{\mu \b}   ( \Lie_{Z^{I_4}}   H)_{e_A \b}   \cdot   \derm_{e_A}   \derm_{\mu}    (  \Lie_{Z^{I_2}}  \Phi_{U} )   \Big)  \; .\\
 \eeaa
 
 \end{lemma}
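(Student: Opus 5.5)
The identity comes from a commutation formula for a single Minkowski vector field $Z\in{\cal Z}$, iterated over the multi-index $I$ by induction on $|I|$. The first point is that Lie derivatives along Killing fields of $m$ commute with $\derm$. Each $Z_{\a\b}$ and $\pa_\a$ is Killing for $m$, and $S$ is conformal Killing with $\Lie_S m=2m$, so its deformation tensor is a constant multiple of $m$ and its Christoffel-type variation vanishes. Hence for every $Z\in{\cal Z}$ we have $\Lie_Z \derm_\la\derm_\mu \Psi=\derm_\la\derm_\mu \Lie_Z\Psi$ for any tensor $\Psi$, because $\Lie_Z$ of the (zero-in-Cartesian) connection of $m$ vanishes for affine conformal fields.

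Next I apply the Leibniz rule to the contraction $g^{\la\mu}\derm_\la\derm_\mu\Phi_U$, treating $\Phi_U$ as the component $\Phi_\nu U^\nu$. This gives
\[
\Lie_Z(g^{\la\mu}\derm_\la\derm_\mu\Phi_U)=(\Lie_Z g^{-1})^{\la\mu}\derm_\la\derm_\mu\Phi_U+g^{\la\mu}\derm_\la\derm_\mu\Lie_Z\Phi_U .
\]
Here I interpret the component after Lie differentiation consistently as the paper does, namely $(\Lie_Z\Phi)_U$ with the frame vector held fixed. From $\Lie_Z m^{-1}=-\hat c(Z)m^{-1}$, with $\hat c(S)=2$ and $\hat c=0$ otherwise, I get $\Lie_Z g^{\la\mu}=-\hat c\, m^{\la\mu}+\Lie_Z H^{\la\mu}$.

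Iterating over $I=(\iota_1,\dots,\iota_k)$ then distributes the $Z$'s over the three factors: the inverse metric, $m$ (which only produces constants $\hat c(I_1)$), and $\Phi$. The terms in which every derivative falls on $\Phi$ reproduce $g^{\la\mu}\derm\derm\Lie_{Z^I}\Phi_U$. The terms in which some derivatives hit $m^{-1}$ alone give $\sum\hat c(I_1)m^{\la\mu}\derm_\la\derm_\mu\Lie_{Z^{I_2}}\Phi_U$ with $I_2\neq I$. The terms in which derivatives hit $H$ give products of the form $\hat c(I_6)\hat c(I_5)(\Lie_{Z^{I_4}}H)^{\la\mu}\derm_\la\derm_\mu\Lie_{Z^{I_2}}\Phi_U$. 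In these, the factors $\hat c(I_5),\hat c(I_6)$ arise because Lie derivatives of $H^{\la\mu}$ with raised indices differ from raising indices on $\Lie H_{\a\b}$ by multiples of $\hat c$; lowering with $m$ costs such constants.

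The last step is purely algebraic: decompose $(\Lie_{Z^{I_4}}H)^{\la\mu}\derm_\la\derm_\mu$ in the null frame using
\[
m^{-1}=-\tfrac12(L\otimes\underline L+\underline L\otimes L)+\textstyle\sum_A e_A\otimes e_A .
\]
Since $H^{\underline L\underline L}=\tfrac14H_{LL}$ and $H^{\underline L L}=\tfrac14 H_{L\underline L}$, and since $H^{\underline L A}=-\tfrac12H_{LA}$ while the remaining contractions are written through $m^{\mu\b}H_{\underline L\b}$ and $m^{\mu\b}H_{e_A\b}$, grouping the terms yields exactly the displayed coefficients $\tfrac14,\tfrac14,-\tfrac12,-\tfrac12,1$.

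I expect the main obstacle to be the bookkeeping of the $\hat c$ constants from $S$ in the combinatorial induction. A second point needing care is that $\derm_\la\derm_\mu$ acting on the scalar component $\Phi_U$ with $U$ a non-constant frame vector must be read as the component of $\derm\derm\Lie_{Z^{I_2}}\Phi$ in the $U$ slot. I would fix that convention first, so that the commutation $[\Lie_Z,\derm]=0$ applies to the full tensor before taking components.
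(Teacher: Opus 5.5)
Your proposal is correct and is essentially the paper's proof. Both write $g^{-1}=m^{-1}+H$, apply the Leibniz rule for $\Lie_{Z^I}$, use $\Lie_{Z^{I_1}}m^{\la\mu}=\hat c(I_1)m^{\la\mu}$, expand $\Lie_{Z^{I_1}}(m^{\la\a}m^{\mu\b}H_{\a\b})$ to get the $\hat c(I_5)\hat c(I_6)$ factors, commute $\Lie_Z$ past $\derm$, isolate the $I_2=I$ term to form $g^{\la\mu}\derm_\la\derm_\mu\Lie_{Z^I}\Phi_U$, and decompose the $H$-contraction in the null frame using $m^{L\underline{L}}=-\frac12$ and $m^{e_Ae_B}=\delta_{AB}$. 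Your induction on $|I|$ is just the multi-index Leibniz rule applied one vector field at a time, and your sign convention for $\hat c$ is immaterial since these are unspecified constants; your explicit reading of $\Lie_{Z^{I}}\Phi_U$ as $(\Lie_{Z^I}\Phi)_U$ is the convention the paper uses implicitly.
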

 
 \begin{proof}
 
We compute for any $U \in {\cal U}$\,, 
\beaa
&& \Lie_{Z^{I}}  ( g^{\la\mu} \derm_{\la}   \derm_{\mu}     \Phi_{U} )  \\
&=&  \Lie_{Z^{I}}  \Big( (m^{\la\mu} + H^{\la\mu} ) \cdot \derm_{\la}   \derm_{\mu}     \Phi_{U} \Big) \\
&=& \sum_{I_1 + I_2 = I} \Big( \Lie_{Z^{I_1}} m^{\la\mu} + \Lie_{Z^{I_1}}  H^{\la\mu} \Big)  \cdot  \Lie_{Z^{I_2}}  \derm_{\la}   \derm_{\mu}     \Phi_{U} \\
&=&\sum_{I_1 + I_2 = I} \hat{c}(I_1) \cdot m^{\la\mu} \cdot \Lie_{Z^{I_2}}  \derm_{\la}   \derm_{\mu}  \Phi_{U}  + \sum_{I_1 + I_2 = I} ( \Lie_{Z^{I_1}}  H^{\la\mu} ) \cdot   \Lie_{Z^{I_2}}  \derm_{\la}   \derm_{\mu}  \Phi_{U} \; .
\eeaa

We note that 
\beaa
&& \Lie_{Z^{I_1}}  H^{\la\mu} \\
 &=&  \Lie_{Z^{I_1}}(  m^{\la\a} \cdot m^{\mu\b} \cdot  H_{\a\b}  ) \\
  &=&\sum_{I_3 + I_4   = I_1 }  \Lie_{Z^{I_3}} (  m^{\la\a}  \cdot m^{\mu\b} ) \cdot  \Lie_{Z^{I_4}} (  H_{\a\b}  ) = \sum_{I_3 + I_4   = I_1 } \Big( \sum_{I_5 + I_6   = I_3 } (  \Lie_{Z^{I_5}}  m^{\la\a}  \cdot  \Lie_{Z^{I_6}}  m^{\mu\b} ) \cdot  \Lie_{Z^{I_4}} (  H_{\a\b}  ) \Big)    \\
  &=&  \sum_{ I_4 + I_5 + I_6  = I_1 } (   \hat{c}(I_5) \cdot  m^{\la\a} )  \cdot (  \hat{c}(I_6) \cdot m^{\mu\b} )  \cdot (   \Lie_{Z^{I_4}}    H_{\a\b}  )  \; .
\eeaa
With the notation $ m^{\la\a}  \cdot m^{\mu\b}  \cdot ( \Lie_{Z^I}   H_{\a\b}  )  =  ( \Lie_{Z^I}   H)^{\la\mu}  $\,, and using the fact that the Lie derivatives in the direction of Minkowski vector fields commute with $\derm$\,, we can then write
\beaa
&& \Lie_{Z^I}  ( g^{\la\mu} \derm_{\la}   \derm_{\mu}     \Phi_{U} )  \\
&=& \sum_{I_1 + I_2 = I} \hat{c}(I_1) \cdot m^{\la\mu} \cdot \derm_{\la}   \derm_{\mu} (  \Lie_{Z^{I_2}}  \Phi_{U} ) \\
&& +  \sum_{I_2 + I_4 + I_5 + I_6  = I } (   \hat{c}(I_5) \cdot  m^{\la\a} )  \cdot (  \hat{c}(I_6) \cdot m^{\mu\b} )  \cdot (   \Lie_{Z^{I_4}}    H_{\a\b}  ) \cdot  \derm_{\la}   \derm_{\mu} (  \Lie_{Z^{I_2}}  \Phi_{U} )   \\
&=& \sum_{I_1 + I_2 = I} \hat{c}(I_1) \cdot m^{\la\mu} \cdot \derm_{\la}   \derm_{\mu} (  \Lie_{Z^{I_2}}  \Phi_{U} ) \\
&& +  \sum_{I_2 + I_4 + I_5 + I_6  = I }   \hat{c}(I_5) \cdot   \hat{c}(I_6)   \cdot (   \Lie_{Z^{I_4}}     H)^{\la\mu} \cdot  \derm_{\la}   \derm_{\mu} (  \Lie_{Z^{I_2}}  \Phi_{U} )    \\
&=&    m^{\la\mu} \cdot \derm_{\la}   \derm_{\mu} (  \Lie_{Z^{I}}  \Phi_{U} ) +       H^{\la\mu} \cdot  \derm_{\la}   \derm_{\mu} (  \Lie_{Z^{I}}  \Phi_{U} )    \\
&& + \sum_{I_1 + I_2 = I, \; I_2 \neq I} \hat{c}(I_1) \cdot m^{\la\mu} \cdot \derm_{\la}   \derm_{\mu} (  \Lie_{Z^{I_2}}  \Phi_{U} ) \\
&& +  \sum_{I_2 + I_4 + I_5 + I_6  = I, \; I_2 \neq I }   \hat{c}(I_5) \cdot   \hat{c}(I_6)   \cdot (   \Lie_{Z^{I_4}}     H)^{\la\mu} \cdot  \derm_{\la}   \derm_{\mu} (  \Lie_{Z^{I_2}}  \Phi_{U} )    \; .
\eeaa

Therefore, 
\beaa
\notag
&& \Lie_{Z^I}  ( g^{\la\mu} \derm_{\la}   \derm_{\mu}     \Phi_{U} ) - g^{\la\mu}    \derm_{\la}   \derm_{\mu}  (  \Lie_{Z^I} \Phi_{U}  )  \\
\notag
&=&  \sum_{I_1 + I_2 = I, \; I_2 \neq I} \hat{c}(I_1) \cdot m^{\la\mu} \cdot \derm_{\la}   \derm_{\mu} (  \Lie_{Z^{I_2}}  \Phi_{U} ) \\
\notag
&& +  \sum_{I_2 + I_4 + I_5 + I_6  = I, \; I_2 \neq I }   \hat{c}(I_5) \cdot   \hat{c}(I_6)   \cdot (   \Lie_{Z^{I_4}}     H)^{\la\mu} \cdot  \derm_{\la}   \derm_{\mu} (  \Lie_{Z^{I_2}}  \Phi_{U} )   \; . 
\eeaa

Now, to estimate the term with  $H$\,, we decompose the contractions in the null frame $\cal U$\,, 
\beaa
 && ( \Lie_{Z^{I_4}}   H)^{\la\mu}  \cdot  \derm_{\la}   \derm_{\mu}    (  \Lie_{Z^{I_2}}  \Phi_{U} )  \\
 & =&  ( \Lie_{Z^{I_4}}   H)^{\underline{L}\mu}  \cdot   \derm_{\underline{L}}   \derm_{\mu}   (  \Lie_{Z^{I_2}}  \Phi_{U} )  +  ( \Lie_{Z^{I_4}}   H)^{L \mu}   \cdot  \derm_{L}   \derm_{\mu}     (  \Lie_{Z^{I_2}}  \Phi_{U} )  \\
 && +  ( \Lie_{Z^{I_4}}   H)^{e_A \mu}   \cdot  \derm_{e_A}   \derm_{\mu}   (  \Lie_{Z^{I_2}}  \Phi_{U} )  \\
 & =&  m^{\underline{L} L} m^{\mu \b}  ( \Lie_{Z^{I_4}}   H)_{L\b}    \cdot  \derm_{\underline{L}}   \derm_{\mu}   (  \Lie_{Z^{I_2}}  \Phi_{U} )  \\
 && +m^{L \underline{L}} m^{\mu \b}   ( \Lie_{Z^{I_4}}   H)_{\underline{L}\b}   \cdot  \derm_{L}   \derm_{\mu} (  \Lie_{Z^{I_2}}  \Phi_{U} )   +   m^{e_A e_A} m^{\mu \b}   ( \Lie_{Z^{I_4}}   H)_{e_A \b}     \cdot \derm_{e_A}   \derm_{\mu}   (  \Lie_{Z^{I_2}}  \Phi_{U} )  \; .
 \eeaa
 
 Using the fact that  $m^{L \underline{L}} = - \frac{1}{2} $ and $m^{e_A e_B}  =  \delta_{AB} $\,, we obtain
\beaa
 && ( \Lie_{Z^{I_4}}   H)^{\la\mu}  \cdot  \derm_{\la}   \derm_{\mu}   (  \Lie_{Z^{I_2}}  \Phi_{U} )  \\
 & =&   - \frac{1}{2} m^{\mu \b}  ( \Lie_{Z^{I_4}}   H)_{L\b}    \cdot  \derm_{\underline{L}}   \derm_{\mu}   (  \Lie_{Z^{I_2}}  \Phi_{U} )   \\
 && - \frac{1}{2} m^{\mu \b}   ( \Lie_{Z^{I_4}}   H)_{\underline{L}\b}  \cdot    \derm_{L}   \derm_{\mu}     (  \Lie_{Z^{I_2}}  \Phi_{U} )  +    m^{\mu \b}   ( \Lie_{Z^{I_4}}   H)_{e_A \b}   \cdot   \derm_{e_A}   \derm_{\mu}   (  \Lie_{Z^{I_2}}  \Phi_{U} )  \\
  & =&   - \frac{1}{2} m^{ \underline{L} L }  ( \Lie_{Z^{I_4}}   H)_{L  L}    \cdot  \derm_{\underline{L}}   \derm_{ \underline{L}}   (  \Lie_{Z^{I_2}}  \Phi_{U} )   - \frac{1}{2} m^{L  \underline{L}}  ( \Lie_{Z^{I_4}}   H)_{L  \underline{L}}    \cdot  \derm_{\underline{L}}   \derm_{L}   (  \Lie_{Z^{I_2}}  \Phi_{U} )   \\
  && - \frac{1}{2} m^{e_A e_A}  ( \Lie_{Z^{I_4}}   H)_{L e_A }    \cdot  \derm_{\underline{L}}   \derm_{e_A} (  \Lie_{Z^{I_2}}  \Phi_{U} )    \\
 && - \frac{1}{2} m^{\mu \b}   ( \Lie_{Z^{I_4}}   H)_{\underline{L}\b}  \cdot    \derm_{L}   \derm_{\mu}   (  \Lie_{Z^{I_2}}  \Phi_{U} )  +    m^{\mu \b}   ( \Lie_{Z^{I_4}}   H)_{e_A \b}   \cdot   \derm_{e_A}   \derm_{\mu}    (  \Lie_{Z^{I_2}}  \Phi_{U} )  \; .
 \eeaa
 Thus,
 \beaa
 \notag
 && ( \Lie_{Z^{I_4}}   H)^{\la\mu}  \cdot  \derm_{\la}   \derm_{\mu}    (  \Lie_{Z^{I_2}}  \Phi_{U} )   \\
  \notag
  & =&   \frac{1}{4}   ( \Lie_{Z^{I_4}}   H)_{L  L}    \cdot  \derm_{\underline{L}}   \derm_{ \underline{L}}   (  \Lie_{Z^{I_2}}  \Phi_{U} )   + \frac{1}{4}  ( \Lie_{Z^{I_4}}   H)_{L  \underline{L}}    \cdot  \derm_{\underline{L}}   \derm_{L}   (  \Lie_{Z^{I_2}}  \Phi_{U} )   \\
   \notag
  && - \frac{1}{2}   ( \Lie_{Z^{I_4}}   H)_{L e_A }    \cdot  \derm_{\underline{L}}   \derm_{e_A}    (  \Lie_{Z^{I_2}}  \Phi_{U} )    \\
   \notag
 && - \frac{1}{2} m^{\mu \b}   ( \Lie_{Z^{I_4}}   H)_{\underline{L}\b}  \cdot    \derm_{L}   \derm_{\mu}     (  \Lie_{Z^{I_2}}  \Phi_{U} )   +    m^{\mu \b}   ( \Lie_{Z^{I_4}}   H)_{e_A \b}   \cdot   \derm_{e_A}   \derm_{\mu}    (  \Lie_{Z^{I_2}}  \Phi_{U} )   \; . 
 \eeaa
  We get then the desired result.
  
 \end{proof}
 
\begin{lemma}\label{decayrateforfullderivativeintermofZ}
We have the following well-known inequality for all $t \geq 0$ and for all $q \in \R$,
\beaa
(1 +  |q| ) \cdot |\derm \Lie_{Z^I}  \Phi | \les \sum_{|J| \leq |I| + 1} | \Lie_{Z^J}  \Phi |  \,.
\eeaa
\end{lemma}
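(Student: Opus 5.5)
We reduce to $|I|=0$ and then induct on $|I|$. For $|I|=0$ the claim reads $(1+|q|)|\derm \Phi| \les |\Phi| + \sum_{Z\in{\cal Z}}|\Lie_Z \Phi|$. The induction step then follows because $\Lie_Z$ commutes with $\derm$ for $Z\in{\cal Z}$. Indeed, $\derm\Lie_{Z^I}\Phi$ is $\derm$ applied to the tensor $\Psi:=\Lie_{Z^I}\Phi$. The base case applied to $\Psi$ gives $(1+|q|)|\derm\Psi|\les \sum_{|J'|\le 1}|\Lie_{Z^{J'}}\Lie_{Z^I}\Phi|$, and each term on the right is of the form $|\Lie_{Z^J}\Phi|$ with $|J|\le|I|+1$. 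So the whole proof is the zeroth-order estimate for an arbitrary tensor $\Psi$.

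For the zeroth-order estimate we split into two regions.

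\textbf{Region $|q|\le 1$.} Here $1+|q|\le 2$. Since the translations $\pa_\a$ belong to ${\cal Z}$, we have
\[
\Lie_{\pa_\a}\Psi = \derm_{\pa_\a}\Psi + \Psi\cdot\derm\pa_\a = \derm_\a\Psi ,
\]
because $\derm\pa_\a=0$ in our fixed coordinates. Therefore $|\derm\Psi|\le\sum_\a|\Lie_{\pa_\a}\Psi|$, which settles this region. The same translation bound also covers the case $t+r\le 1$.

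\textbf{Region $|q|\ge 1$.} We use the classical identities expressing $\pa_\a$ through the scaling and boost fields. For scalars these are
\begin{align*}
(t-r)\,\pa_t &= \tfrac{1}{t+r}\big(tS - x^iZ_{0i}\big) \;\text{(up to sign conventions)}, \\
(t-r)\,\pa_r &= \tfrac{1}{t+r}\big(tZ_{0r} - rS\big)\;\text{type combinations}, \\
r\,(\pa_i-\tfrac{x_i}{r}\pa_r) &= \tfrac{x^j}{r}Z_{ji}.
\end{align*}
Whenever $|q|\ge1$ or $t+r\ge1$ we have $|t-r|\le t+r$, so these identities give $(1+|q|)|\pa\psi|\les\sum_{Z\in{\cal Z}}|Z\psi|+|\pa\psi|$. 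Since $|\pa\psi|$ is itself one of the $|Z\psi|$ terms, this is the desired bound for scalars.

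\textbf{Passing from scalars to tensors.} This is the main obstacle. We apply the scalar identities componentwise to the Cartesian components $\Psi_{\a\b}$, for which $\derm_\mu\Psi_{\a\b}=\pa_\mu(\Psi_{\a\b})$. We must then compare $Z(\Psi_{\a\b})$ with $(\Lie_Z\Psi)_{\a\b}$. Their difference is $\Psi\cdot\pa Z$, and since each $Z\in{\cal Z}$ has constant-coefficient first derivatives in Cartesian coordinates, this difference is bounded by $C|\Psi|$. Hence
\[
(1+|q|)|\derm\Psi| \les \sum_{Z}|\Lie_Z\Psi| + |\Psi| ,
\]
and $|\Psi|=|\Lie_{Z^{\emptyset}}\Psi|$ is included in the sum over $|J|\le 1$. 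This completes the base case, and the induction above then gives the lemma for all $I$.
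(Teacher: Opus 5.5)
The paper gives no proof of this lemma; it is quoted as well known. Your scheme is the standard one, and most of it is correct:
\begin{itemize}
\item You apply a zeroth-order bound to $\Psi=\Lie_{Z^I}\Phi$. No induction or commutation of $\Lie_Z$ with $\derm$ is needed here, since $\Lie_{Z^{J'}}\Lie_{Z^I}\Phi=\Lie_{Z^{(J',I)}}\Phi$ by Definition~\ref{DefinitionofZI}.
\item For $|q|\le1$ you use $\Lie_{\partial_\alpha}\Psi=\derm_\alpha\Psi$.
\item You pass from scalars to tensors via $Z(\Psi_{\alpha\beta})-(\Lie_Z\Psi)_{\alpha\beta}=-\Psi_{\mu\beta}\partial_\alpha Z^\mu-\Psi_{\alpha\mu}\partial_\beta Z^\mu=O(|\Psi|)$, with $|\Psi|$ absorbed as the $J=\emptyset$ term.
\end{itemize}

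\textbf{The gap is in the scalar estimate for $|q|\ge1$.} Your first two identities do give
\[
|t-r|\,\big(|\partial_t\psi|+|\partial_r\psi|\big)\les\sum_{Z}|Z\psi| .
\]
The third identity only gives $r\,|\rpa\psi|\les\sum_{i,j}|Z_{ij}\psi|$. To control $|t-r|\,|\partial_i\psi|$ you need $|t-r|\,|\rpa\psi|$ instead.

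In the interior region $r\ll t$ (for instance near $r=0$ at large $t$, where $|q|\simeq t$), the ratio $|t-r|/r$ is unbounded. So rotations alone cannot bound the angular part of $\partial\psi$ there. Your remark $|t-r|\le t+r$ does not help, because this identity carries the weight $r$, not $t+r$.

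\textbf{The missing ingredient} is the boost representation of the angular derivatives,
\[
t\,\rpa_i \;=\; Z_{0i}-\frac{x_i}{r}\frac{x^j}{r}\,Z_{0j},
\]
which is the first identity of Lemma~\ref{restrictedderivativesintermsofZ}. Combined with the rotation identity, it gives $(t+r)\,|\rpa\psi|\les\sum_Z|Z\psi|$.

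Alternatively, you can use the Cartesian identity directly:
\[
(t^2-r^2)\,\partial_i \;=\; t\,Z_{0i}-x_i\,S-x^j Z_{ij}.
\]
After dividing by $t+r$, its coefficients are bounded for $t\ge0$.

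With either of these added, your argument proves the lemma.
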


\begin{definition}\label{defrestrictedderivativesintermsofZ}
We define for $i \in \{1, 2, 3\} $\;,
\bea
\notag
\rpa_{i} &:=& \pa_i - \frac{x_i}{r} \pa_{r} \; .
\eea
\end{definition}

\begin{lemma}\label{restrictedderivativesintermsofZ}

Let $Z_{\a\b}$ be the Minkowski vector fields as in Definition \label{DefinitionofMinkowskivectorfields}. For restricted partial derivatives defines as in Definition \ref{defrestrictedderivativesintermsofZ}, we have for $i \in \{1, 2, 3\} $\;,
\bea
\rpa_{i} &=& \frac{  - \frac{x_i}{r} \frac{x^j}{r}   Z_{0j}  +  Z_{0i}    }{  t  }  \;, 
\eea
and we also have
 \bea
\rpa_i  &=&   \frac{x^j}{r^2}   Z_{ij} \; .
 \eea

\end{lemma}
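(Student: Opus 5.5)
The statement to prove is Lemma \ref{restrictedderivativesintermsofZ}, a pure algebraic identity between vector fields on Minkowski space in the fixed coordinates. I will work directly with the definitions of $Z_{\a\b}$ in Definition \ref{DefinitionofMinkowskivectorfields}, using $x_0 = m_{00}x^0 = -t$ and $x_i = x^i$ for the spatial indices. First I compute the boosts and rotations explicitly:
\[
Z_{0i} = x_i \pa_0 - x_0 \pa_i = x^i \pa_t + t \pa_i , \qquad Z_{ij} = x_j \pa_i - x_i \pa_j .
\]
I also use $\pa_r = \frac{x^j}{r}\pa_j$.

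For the first identity, I contract the boosts with $\frac{x^j}{r}$:
\[
\frac{x^j}{r} Z_{0j} = \frac{|x|^2}{r}\pa_t + t \frac{x^j}{r}\pa_j = r\pa_t + t\pa_r .
\]
Hence
\[
\frac{x_i}{r}\frac{x^j}{r} Z_{0j} = x_i \pa_t + t\frac{x_i}{r}\pa_r ,
\]
and subtracting this from $Z_{0i} = x^i\pa_t + t\pa_i$ makes the $\pa_t$ terms cancel exactly. What remains is
\[
Z_{0i} - \frac{x_i}{r}\frac{x^j}{r} Z_{0j} = t\Big(\pa_i - \frac{x_i}{r}\pa_r\Big) = t\,\rpa_i .
\]
Dividing by $t$, which is valid for $t>0$, gives the first formula.

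For the second identity, I compute
\[
\frac{x^j}{r^2} Z_{ij} = \frac{x^j}{r^2}\big(x_j\pa_i - x_i\pa_j\big) = \frac{r^2}{r^2}\pa_i - \frac{x_i}{r}\cdot\frac{x^j}{r}\pa_j = \pa_i - \frac{x_i}{r}\pa_r = \rpa_i ,
\]
for $r>0$.

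There is no genuine obstacle here. The only points to handle carefully are the sign convention $x_0 = -t$, which fixes the sign of $t\pa_i$ in $Z_{0i}$, and stating the domains of validity, $t \neq 0$ and $r \neq 0$. If the opposite orientation convention for $Z_{0i}$ were used, both sides would change sign consistently, so the identity still holds.
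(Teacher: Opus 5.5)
Your computation is correct and complete. With $x_0=m_{00}x^0=-t$ you get $Z_{0i}=x^i\pa_t+t\pa_i$, the $\pa_t$ terms cancel in $Z_{0i}-\frac{x_i}{r}\frac{x^j}{r}Z_{0j}$, and $\frac{x^j}{r^2}Z_{ij}=\pa_i-\frac{x_i}{r}\pa_r$ follows directly, valid for $t\neq 0$ and $r\neq 0$ respectively. The paper gives no argument here and only refers to \cite{G6}, so this direct expansion is exactly the verification needed.

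One correction concerns your closing remark about orientation. $\rpa_i$ is defined independently of the vector fields $Z_{\a\b}$, so replacing $Z_{0i}$ by $Z_{i0}=-Z_{0i}$ would flip the sign of the right-hand side only. The first identity therefore genuinely depends on the convention $x_0=-t$ fixed in Definition \ref{DefinitionofMinkowskivectorfields}; it is not convention-independent. Since the remark is not used in your argument, the proof stands.
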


\begin{proof}
See for example \cite{G6}.
\end{proof}

 \begin{lemma}\label{commutationformaulamoreprecisetoconservegpodcomponentsstructure}
 Let  $\Phi_{\mu}$ be a one-tensor valued in the Lie algebra or a scalar. Then, we have for all $I$\,, and for any $V \in \cal T$, 
 \bea\label{morerefinedcommutationformularusingonlytangentialcomponennts}
\notag
&&| \Lie_{Z^I}  ( g^{\la\mu} \derm_{\la}   \derm_{\mu}     \Phi_{V} ) - g^{\la\mu}    \derm_{\la}   \derm_{\mu}  (  \Lie_{Z^I} \Phi_{V}  ) |  \\
\notag
&\les&  \sum_{|K| < |I| }  | g^{\la\mu} \cdot \derm_{\la}   \derm_{\mu} (  \Lie_{Z^{K}}  \Phi_{V} ) | \\
\notag
&& +  \sum_{|J| + |K| \leq |I|, \; |K| < |I| }  \Big(   |   ( \Lie_{Z^{J}}   H)_{L  L} |   \cdot    \frac{1}{(1+t+|q|)} \cdot  \sum_{|M| \leq |K|+1}  | \derm ( \Lie_{Z^M}  \Phi )  |  \\
   \notag
 && +    |   ( \Lie_{Z^{J}}   H)_{L  L}   | \cdot   \frac{1}{(1+|q|)} \cdot   \sum_{|M| \leq |K|+1}    \sum_{ V^\prime \in \cal T } | \derm   ( \Lie_{Z^M}  \Phi _{V^\prime} ) |   \\
\notag
 && +   | ( \Lie_{Z^{J}}   H)_{L  \underline{L}} |   \cdot | \derm_{L}    \derm_{\underline{L}}  (  \Lie_{Z^{K}}  \Phi_{V} )  | \\
\notag
    &&  +    \frac{1}{(1 + t + |q|)  }  \cdot   |  ( \Lie_{Z^{J}}   H)_{L e_A }   | \cdot | \sum_{|M| \leq |K|+1} | \derm (  \Lie_{Z^{M}}  \Phi )| \\
    \notag
 && +       | m^{\mu \b}   ( \Lie_{Z^{J}}   H)_{\underline{L}\b}  \cdot    \derm_{L}   \derm_{\mu}     (  \Lie_{Z^{K}}  \Phi_{V} )  | + |   m^{\mu \b}   ( \Lie_{Z^{J}}   H)_{e_A \b}   \cdot   \derm_{e_A}   \derm_{\mu}    (  \Lie_{Z^{K}}  \Phi_{V} )   | \Big) \; .\\
  \eea
 \end{lemma}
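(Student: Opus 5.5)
The plan is to start from the exact identity of Lemma \ref{theexactcommutatortermwithdependanceoncomponents} and estimate each of its terms separately, renaming $I_2 =: K$ (so $|K|<|I|$) and $I_4 =: J$ (so $|J|+|K|\le |I|$). The constants $\hat c(I_5)\hat c(I_6)$ are bounded and only contribute to the implicit constant.

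The first sum, $\hat c(I_1)\, m^{\la\mu}\derm_\la\derm_\mu(\Lie_{Z^{I_2}}\Phi_V)$, I would rewrite as $g^{\la\mu}\derm_\la\derm_\mu \Lie_{Z^{K}}\Phi_V - H^{\la\mu}\derm_\la\derm_\mu \Lie_{Z^K}\Phi_V$. The first piece is exactly the first line of the claimed bound. The $H$ piece is of the same form as the remaining terms with $J=0$, so it is absorbed into them after the same null decomposition.

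Among the remaining terms, the ones involving $(\Lie_{Z^J}H)_{L\underline L}$, $m^{\mu\b}(\Lie_{Z^J}H)_{\underline L\b}\derm_L\derm_\mu$ and $m^{\mu\b}(\Lie_{Z^J}H)_{e_A\b}\derm_{e_A}\derm_\mu$ are simply kept as they stand, since they appear verbatim on the right-hand side of \eqref{morerefinedcommutationformularusingonlytangentialcomponennts}.

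For the term $(\Lie_{Z^J}H)_{Le_A}\derm_{\underline L}\derm_{e_A}(\Lie_{Z^K}\Phi_V)$, I would use that the tangential derivatives of Lemma \ref{restrictedderivativesintermsofZ} satisfy $|\derm_{e_A}\Psi|\les (1+t+|q|)^{-1}\sum_{|M|\le1}|\Lie_{Z^M}\Psi|$. In the region $t\ge r/2$ I take the first formula with $1/t$, and otherwise the second with $1/r$. The lower-order corrections from commuting $\Lie_Z$ with the components are Christoffel-free, because $\derm$ is flat and commutes with $\Lie_Z$. Applying this with $\Psi=\derm_{\underline L}\Lie_{Z^K}\Phi_V$, and commuting $\Lie_{Z^M}$ through $\derm$ (which commutes with Minkowski Lie derivatives up to $\derm\Lie_{Z^{M'}}$ with $|M'|\le |M|$), gives the factor $(1+t+|q|)^{-1}\sum_{|M|\le |K|+1}|\derm\Lie_{Z^M}\Phi|$. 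Here the full components of $\Phi$ are allowed.

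The main obstacle is the term $\tfrac14(\Lie_{Z^J}H)_{LL}\,\derm_{\underline L}\derm_{\underline L}(\Lie_{Z^K}\Phi_V)$. One $\underline L$-derivative must be traded for Lie derivatives, and Lemma \ref{decayrateforfullderivativeintermofZ} only gives a $(1+|q|)^{-1}$ factor. The decoupling has to be won here: I want the bad factor to multiply only tangential components. My plan is to prove the refined pointwise inequality
\[
|\derm \Psi_{V}|\les \frac{1}{1+t+|q|}\sum_{|I|\le1}|\Lie_{Z^I}\Psi| + \frac{1}{1+|q|}\sum_{V'\in{\cal T}}\sum_{|I|\le 1}|\Lie_{Z^I}\Psi_{V'}| \quad (V\in{\cal T}),
\]
applied to $\Psi=\derm\Lie_{Z^K}\Phi$ (with $\derm_{\underline L}$ acting, then components in $V$). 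To prove it, I would write $(1+|q|)\derm_{\underline L}$ through $S$ and $\Omega_{0i}$, using $(t-r)\underline L$ as a combination of $S - \frac{x^i}{r}Z_{0i}$. Then I express $\derm_Z(\Psi_V)$ as $(\Lie_Z\Psi)_V - \Psi(\Lie_Z V)$, noting $\Psi(\derm_V Z)$-type terms. The key observation is that $[Z,V]$ for $V\in{\cal T}$ and $Z\in{\cal Z}$ has components along ${\cal T}$ plus an $\underline L$-component with coefficient $O(|q|/(t+r))$, or $O(1)$ divided by $r$. Frame rotation terms $\derm_Z e_A$, $\derm_Z L$ thus map ${\cal T}$ into ${\cal T}$ modulo a good $(1+t+|q|)^{-1}$-weighted full term. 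Checking that $\Lie_S L = -L$, $\Lie_{Z_{0i}}L \in$ span of ${\cal T}$ plus $\frac{t-r}{r}$ multiples of $\underline L$, and $\Lie_{Z_{ij}}$ preserving ${\cal T}$, is the computation I expect to be delicate, especially near $r$ small, where I would use Lemma \ref{decayrateforfullderivativeintermofZ} directly since $1+|q|\sim 1+t+|q|$ there. The remaining $\derm_{\underline L}(\Lie_{Z^K}\Phi)_{V}$ is handled the same way, yielding both the $(1+t+|q|)^{-1}$ full-component term and the $(1+|q|)^{-1}$ tangential term with $|M|\le |K|+1$. Summing all contributions gives \eqref{morerefinedcommutationformularusingonlytangentialcomponennts}.
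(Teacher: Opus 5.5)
Your proposal is correct and follows the same skeleton as the paper's proof. You start from the exact identity of Lemma \ref{theexactcommutatortermwithdependanceoncomponents}, write $m^{\la\mu}=g^{\la\mu}-H^{\la\mu}$ in the first sum (the $H$-piece is a $J=0$ term), keep the $H_{L\underline{L}}$, $H_{\underline{L}\b}$ and $H_{e_A\b}$ terms, and do real work only on the $H_{LL}$ and $H_{Le_A}$ terms. The differences lie in those two terms.

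\textbf{The $H_{LL}$ term.} The paper does not prove the refined inequality for $|\derm\Psi_{UV}|$, $V\in{\cal T}$; it quotes it from Lemma 3.3 of \cite{BFJST}. Your derivation via $(t-r)\underline{L}=S-\frac{x^i}{r}Z_{0i}$ makes the argument self-contained and shows why only ${\cal T}$-components meet the factor $(1+|q|)^{-1}$.

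The cleanest bookkeeping is covariant: $(\derm_Z\Psi)(U,V)=(\Lie_Z\Psi)(U,V)-\Psi(\derm_UZ,V)-\Psi(U,\derm_VZ)$. With $\frac{x^i}{r}$ frozen at the point, the map $V\mapsto \derm_VS-\frac{x^i}{r}\derm_VZ_{0i}$ sends $L\mapsto 0$ and $e_A\mapsto e_A$, so ${\cal T}$ is preserved exactly. For $|q|\le 1$, use $(\derm_X\Psi)(U,V)=X^\mu(\Lie_{\pa_\mu}\Psi)(U,V)$.

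Do not estimate the frame-rotation pieces one $Z_{0i}$ at a time, as your sketch suggests. The vector $\derm_{Z_{0i}}e_A$ has $\underline{L}$-component $\frac{t}{2r}e_A^i$, which is $O(1)$ in the wave zone. After division by $t-r$, it would put the bad factor on $\Psi(\cdot,\underline{L})$. It cancels only after contraction with $\frac{x^i}{r}$.

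\textbf{The $H_{Le_A}$ term.} The paper writes $e_A=\frac1r C^{ij}_AZ_{ij}$ for $q>0$, or $e_A=\frac1t C^j_AZ_{0j}$ for $q<0$. It then differentiates the coefficients along $\underline{L}$, using $\pa_{\underline{L}}(x^j/r)=0$.

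Your route is to reverse the order of derivatives and use the standard tangential decay. This is shorter, and it is what the paper itself does in the next lemma for the $H_{\underline{L}\b}$ and $H_{e_A\b}$ terms. It works provided you apply the tangential estimate to the full tensor $\derm\Lie_{Z^K}\Phi$ and contract with $(\underline{L},V)$ afterwards. That is legitimate because $\derm_{\underline{L}}e_A=0$ and the flat Hessian is symmetric. Applying it instead to the contracted one-tensor $\derm_{\underline{L}}\Lie_{Z^K}\Phi$ produces an extra $\frac1r\derm_{e_A}$ term, coming from $\derm_{e_A}\underline{L}=-\frac1r e_A$.

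\textbf{The $H_{L\underline{L}}$ term.} This term is not literally verbatim. The identity contains $\derm_{\underline{L}}\derm_L$, while the statement contains $\derm_L\derm_{\underline{L}}$. The swap is immediate from $\derm_LU=\derm_{\underline{L}}U=0$ for $U\in{\cal U}$ and flatness, as the paper notes, but you should say so.
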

 
  \begin{proof}
 From Lemma \ref{theexactcommutatortermwithdependanceoncomponents}, we immediately get that for any $U \in {\cal U}$\,,
 \bea
\notag
&&| \Lie_{Z^I}  ( g^{\la\mu} \derm_{\la}   \derm_{\mu}     \Phi_{U} ) - g^{\la\mu}    \derm_{\la}   \derm_{\mu}  (  \Lie_{Z^I} \Phi_{U}  ) |  \\
\notag
&\les&  \sum_{ |K| < |I| }  | m^{\la\mu} \cdot \derm_{\la}   \derm_{\mu} (  \Lie_{Z^{K}}  \Phi_{U} ) | \\
\notag
&& +  \sum_{J, \, K, \,  |J| + |K| \leq |I|, \; |K| < |I| }  \Big( |   ( \Lie_{Z^{J}}   H)_{L  L}    \cdot  \derm_{\underline{L}}   \derm_{ \underline{L}}   (  \Lie_{Z^{K}}  \Phi_{U} ) | \\
\notag
&&  + | ( \Lie_{Z^{J}}   H)_{L  \underline{L}}    \cdot  \derm_{\underline{L}}   \derm_{L}   (  \Lie_{Z^{K}}  \Phi_{U} )  |  + |  ( \Lie_{Z^{J}}   H)_{L e_A }    \cdot  \derm_{\underline{L}}   \derm_{e_A}    (  \Lie_{Z^{K}}  \Phi_{U} )  |   \\
   \notag
 && + | m^{\mu \b}   ( \Lie_{Z^{J}}   H)_{\underline{L}\b}  \cdot    \derm_{L}   \derm_{\mu}     (  \Lie_{Z^{K}}  \Phi_{U} )  | + |   m^{\mu \b}   ( \Lie_{Z^{J}}   H)_{e_A \b}   \cdot   \derm_{e_A}   \derm_{\mu}    (  \Lie_{Z^{K}}  \Phi_{U} )   | \Big)  \; ,
 \eea
and also, from the proof of Lemma \ref{theexactcommutatortermwithdependanceoncomponents}, we get, in addition, that for any $U \in {\cal U}$\;,
   \bea
\notag
&&| \Lie_{Z^I}  ( g^{\la\mu} \derm_{\la}   \derm_{\mu}     \Phi_{U} ) - g^{\la\mu}    \derm_{\la}   \derm_{\mu}  (  \Lie_{Z^I} \Phi_{U}  ) |  \\
\notag
&\les&  \sum_{ |K| < |I| }  | g^{\la\mu} \cdot \derm_{\la}   \derm_{\mu} (  \Lie_{Z^{K}}  \Phi_{U} ) | \\
\notag
&& +  \sum_{J, \, K, \,  |J| + |K| \leq |I|, \; |K| < |I| }  \Big( |   ( \Lie_{Z^{J}}   H)_{L  L}    \cdot  \derm_{\underline{L}}   \derm_{ \underline{L}}   (  \Lie_{Z^{K}}  \Phi_{U} ) | \\
\notag
&&  + | ( \Lie_{Z^{J}}   H)_{L  \underline{L}}    \cdot  \derm_{\underline{L}}   \derm_{L}   (  \Lie_{Z^{K}}  \Phi_{U} )  |  + |  ( \Lie_{Z^{J}}   H)_{L e_A }    \cdot  \derm_{\underline{L}}   \derm_{e_A}    (  \Lie_{Z^{K}}  \Phi_{U} )  |   \\
   \notag
 && + | m^{\mu \b}   ( \Lie_{Z^{J}}   H)_{\underline{L}\b}  \cdot    \derm_{L}   \derm_{\mu}     (  \Lie_{Z^{K}}  \Phi_{U} )  | + |   m^{\mu \b}   ( \Lie_{Z^{J}}   H)_{e_A \b}   \cdot   \derm_{e_A}   \derm_{\mu}    (  \Lie_{Z^{K}}  \Phi_{U} )   | \Big)  \; .
 \eea
Now, we are going to estimate the terms one by one.\\
  
 \textbf{The term $   ( \Lie_{Z^{I_4}}   H)_{L  L}    \cdot  \derm_{\underline{L}}   \derm_{ \underline{L}}   (  \Lie_{Z^{I_2}}  \Phi_{U} ) $}\,:\\
 
 We have, see Lemma 3.3 in \cite{BFJST}, that for any tensor $\Psi_{UV}$ and for any $U \in {\cal U}$ and for any $V \in {\cal T}$,
 \bea\label{estimateforgradientoftensorestaimetedbyLiederivativesoftensorwithrefinedcomponentsforthebadfactor}
 \notag
 |\derm \Psi_{UV} | \les \sum_{|I| \leq 1}  \frac{1}{(1+t+|q|)} \cdot | \Lie_{Z^I} \Psi |  +  \sum_{U^\prime \in {\cal U},  V^\prime \in {\cal T} } \sum_{|I| \leq 1}  \frac{1}{(1+|q|)} \cdot  | \Lie_{Z^I} \Psi_{U^\prime V^\prime} | \; . \\
 \eea
 We can then apply the above for $\Psi_{UV} = \derm_{U} (  \Lie_{Z^{K}}   \Phi_{V} ) $\,, and we get
  \beaa
 && |\derm \derm_{U} (  \Lie_{Z^{K}}   \Phi_{V} ) | \\
 &\les& \sum_{|M| \leq 1}  \frac{1}{(1+t+|q|)} \cdot | \Lie_{Z^M} \derm (  \Lie_{Z^{K}}   \Phi)  |  +    \sum_{U^\prime \in {\cal U},  V^\prime \in {\cal T} } \sum_{|M| \leq 1}  \frac{1}{(1+|q|)} \cdot  | \Lie_{Z^M} \derm_{U^\prime } (  \Lie_{Z^{K}}   \Phi_{V^\prime } ) | \; .
 \eeaa
 Using the commutation of the Lie derivative in the direction of Minkowski vector fields with the covariant derivative of Minkowski $\derm$, we obtain 
  \bea
  \notag
 && | \derm_{\underline{L}}   \derm_{ \underline{L}}   (  \Lie_{Z^{K}}  \Phi_{V} )   | \\
   \notag
 &\les &\sum_{|M| \leq |K|+1}  \frac{1}{(1+t+|q|)} \cdot | \derm ( \Lie_{Z^M}  \Phi )  |  +  \sum_{ V^\prime \in {\cal T} }   \sum_{|M| \leq |K|+1}  \frac{1}{(1+|q|)} \cdot  | \derm   ( \Lie_{Z^M}  \Phi _{V^\prime} ) | \; .
 \eea
 Therefore, for all $V \in {\cal T}$, 
 \bea
 \notag
 && \sum_{  |J| + |K| \leq |I|, \; |K| < |I| }  |   ( \Lie_{Z^{J}}   H)_{L  L}    \cdot  \derm_{\underline{L}}   \derm_{ \underline{L}}   (  \Lie_{Z^{K}}  \Phi_{V} ) | \\
  \notag
  &\les&  \sum_{  |J| + |K| \leq |I|, \; |K| < |I| }    |   ( \Lie_{Z^{J}}   H)_{L  L} |   \cdot    \sum_{|M| \leq |K|+1}  \frac{1}{(1+t+|q|)} \cdot | \derm ( \Lie_{Z^M}  \Phi )  |  \\
   \notag
 && + \sum_{  |J| + |K| \leq |I|, \; |K| < |I| }   |   ( \Lie_{Z^{J}}   H)_{L  L}   | \cdot   \sum_{|M| \leq |K|+1}  \frac{1}{(1+|q|)} \cdot   \big( \sum_{ V^\prime \in {\cal T} } | \derm   ( \Lie_{Z^M}  \Phi _{V^\prime} ) | \big)  \;. 
 \eea
 Since, as we have already shown in Lemma \ref{decayrateforfullderivativeintermofZ}, 
  \beaa
 |\derm \Psi_{UV} | \les \sum_{|I| \leq 1}  \frac{1}{(1+|q|)} \cdot  | \Lie_{Z^I} \Psi | \; ,
 \eeaa
 we also get that for all $U \in {\cal U}$\,,
  \bea
   \notag
 && \sum_{  |J| + |K| \leq |I|, \; |K| < |I| }  |   ( \Lie_{Z^{J}}   H)_{L  L}    \cdot  \derm_{\underline{L}}   \derm_{ \underline{L}}   (  \Lie_{Z^{K}}  \Phi_{U} ) | \\
  \notag
  &\les&  \sum_{  |J| + |K| \leq |I|, \; |K| < |I| }   |   ( \Lie_{Z^{J}}   H)_{L  L}   | \cdot   \sum_{|M| \leq |K|+1}  \frac{1}{(1+|q|)} \cdot    | \derm   ( \Lie_{Z^M}  \Phi  ) |  \;. \\
 \eea

 \textbf{The term $ \derm_{\underline{L}}   \derm_{L}   (  \Lie_{Z^{K}}  \Phi_{U} )$}\,:\\
 
We have
\beaa
 && \derm_{\underline{L}}   \derm_{L}   (  \Lie_{Z^{K}}  \Phi_{U} ) \\
  &=&   \pa_{\underline{L}}  \Big(  \derm_{L}   (  \Lie_{Z^{K}}  \Phi_{U} ) \Big) -    \derm_{ \derm_{\underline{L}} L }   (  \Lie_{Z^{K}}  \Phi_{U} ) -    \derm_{L }   (  \Lie_{Z^{K}}  \Phi_{ \derm_{\underline{L}}  U} )  \; .
  \eeaa
We have shown in the Appendix of \cite{G2} (by taking in \cite{G2}, a zero Schwarzschild mass, which gives a Minkowski metric), that for all $U \in {{\cal U}}$, we have $ \derm_{\underline{L}} U  =  \derm_{L } U = 0 \, . $
Thus,
\beaa
  \derm_{\underline{L}}   \derm_{L}   (  \Lie_{Z^{K}}  \Phi_{U} ) &=&   \pa_{\underline{L}}  \Big(  \pa_{L}   (  \Lie_{Z^{K}}  \Phi_{U} ) \Big)   \; .
  \eeaa
As a result, we could write,
\bea
\notag
  \derm_{\underline{L}}   \derm_{L}   (  \Lie_{Z^{K}}  \Phi_{U} ) &=&    \pa_{L}  \Big(  \pa_{\underline{L}}  (  \Lie_{Z^{K}}  \Phi_{U} ) \Big)   =    \pa_{L}  \Big(  \derm_{\underline{L}}  (  \Lie_{Z^{K}}  \Phi_{U} )  \Big)   \\
  &=&    \derm_{L}    \derm_{\underline{L}}  (  \Lie_{Z^{K}}  \Phi_{U} )     \; .
  \eea
Hence,
 \bea
 \notag
 && \sum_{  |J| + |K| \leq |I|, \; |K| < |I| }  | ( \Lie_{Z^{J}}   H)_{L  \underline{L}}    \cdot  \derm_{\underline{L}}   \derm_{L}   (  \Lie_{Z^{K}}  \Phi_{U} )  |  \\
  \notag
 &\les&  \sum_{  |J| + |K| \leq |I|, \; |K| < |I| }  | ( \Lie_{Z^{J}}   H)_{L  \underline{L}} |   \cdot | \derm_{L}    \derm_{\underline{L}}  (  \Lie_{Z^{K}}  \Phi_{U} )  | \; .\\
 \eea
 
\textbf{The term $\derm_{\underline{L}}   \derm_{e_A}   (  \Lie_{Z^{K}}  \Phi_{U} )$}\,:\\

We look at the region $t \geq 1 $ or $r \geq 1$ and we distinguish the cases $q > 0$ and $q \leq 0$\,:\\
  
\textbf{In the case where $q = r-t > 0$}\,:

We showed in Lemma \ref{restrictedderivativesintermsofZ}, that $ \rpa_i  =   \frac{x^j}{r^2}   Z_{ij} \, ,$ and hence, we can write
 \bea
e_A = \frac{1}{r}C^{ij}_A Z_{ij} \, ,
\eea
where $C^{ij}_A$ are in fact linear combinations of  $\frac{x^j}{r}$. However, $ \pa_{\underline{L}} \frac{x^j}{r} = -  \frac{x^{i}}{r} ( \frac{\de_{i}^{\;\; j} }{r}  ) +   \frac{x^{i}}{r}   \frac{x^j}{r^2} ( \pa_{i} r  ) \; .$
We have $ \pa_{i} r = \frac{x_i}{r} \; .$ Thus,
\beaa
 \pa_{r} ( \frac{x^j}{r} ) &=&  \frac{x^{i}}{r} ( \frac{\de_{i}^{\;\; j} }{r}  ) -    \frac{x^{i}}{r}   \frac{x^j}{r^2} ( \pa_{i} r  )  =  \frac{x^{i}}{r} ( \frac{\de_{i}^{\;\; j} }{r}  ) -  \frac{x^{i} \cdot x_i}{r^2}   \frac{x^j}{r^2}  =  0 \;.
\eeaa
As a result
\bea\label{Lbarderivativeofxioverrisinfactzero}
\pa_{\underline{L}} \frac{x^j}{r} = 0 \; ,
\eea
and therefore, $ \pa_{\underline{L}} C^{ij}_A = 0 \, .$ We also have $\pa_{\underline{L}} \frac{1}{r} = - \pa_{r}  ( \frac{1}{r}  ) =   \frac{1}{r^2}   \, .$ Thus,
\bea
\pa_{\underline{L}} ( \frac{1}{r}  C^{ij}_A ) =  \frac{1}{r^2}   C^{ij}_A \; .
\eea
Since $  \derm_{e_A}   (  \Lie_{Z^{K}}  \Phi_{U} ) =  \frac{1}{r} C^{ij}_A \derm_{Z_{ij} }  (  \Lie_{Z^{K}}  \Phi_{U} ) \; , $ therefore,
  \beaa
  \notag
  \derm_{\underline{L}}   \derm_{e_A}   (  \Lie_{Z^{K}}  \Phi_{U} ) &=&  \pa_{\underline{L}} ( \frac{1}{r}  C^{ij}_A ) \cdot \derm_{Z_{ij} }  (  \Lie_{Z^{K}}  \Phi_{U} )  + \frac{1}{r} C^{ij}_A   \derm_{\underline{L}}  \derm_{Z_{ij} }  (  \Lie_{Z^{K}}  \Phi_{U} ) \\
    \notag
   &=&   \frac{1}{r^2}  C^{ij}_A \cdot \derm_{Z_{ij} }  (  \Lie_{Z^{K}}  \Phi_{U} )  + \frac{1}{r} C^{ij}_A   \derm_{\underline{L}}  \derm_{Z_{ij} }  (  \Lie_{Z^{K}}  \Phi_{U} ) \; . 
  \eeaa
However, $ Z_{ij} = x_{j} \pa_{i} - x_{i} \pa_{j} $\,, and thus,
\beaa
  \derm_{Z_{ij} }  (  \Lie_{Z^{K}}  \Phi ) = x_{j} \derm_{ \frac{\pa}{\pa x_i}}  (  \Lie_{Z^{K}}  \Phi )   - x_{i} \derm_{ \frac{\pa}{\pa x_j}}    (  \Lie_{Z^{K}}  \Phi )   \, .
\eeaa
Therefore,
\beaa
 | \frac{1}{r^2}  C^{ij}_A \cdot \derm_{Z_{ij} }  (  \Lie_{Z^{K}}  \Phi_{U} )  | &\leq&   | \frac{x_{j}}{r^2}  C^{ij}_A  |  \cdot | \derm_{ \frac{\pa}{\pa x_i}}  (  \Lie_{Z^{K}}  \Phi )| +    | \frac{x_{i}}{r^2}  C^{ij}_A  | \cdot | \derm_{ \frac{\pa}{\pa x_j}}    (  \Lie_{Z^{K}}  \Phi )  | \; .
 \eeaa
 Given that $C^{ij}_A$ and $\frac{x_{i}}{r}$ are bounded, we get 
 \bea
 \notag
 | \frac{1}{r^2}  C^{ij}_A \cdot \derm_{Z_{ij} }  (  \Lie_{Z^{K}}  \Phi_{U} )  | &\leq&    \frac{1}{r}   \cdot | \derm_{ \frac{\pa}{\pa x_i}}  (  \Lie_{Z^{K}}  \Phi )| +   \frac{1}{r} \cdot | \derm_{ \frac{\pa}{\pa x_j}}    (  \Lie_{Z^{K}}  \Phi )  | \\
 &\leq&    \frac{1}{r}   \cdot | \derm (  \Lie_{Z^{K}}  \Phi )|  \; .
 \eea
 Whereas to the term $\frac{1}{r} C^{ij}_A   \derm_{\underline{L}}  \derm_{Z_{ij} }  (  \Lie_{Z^{K}}  \Phi_{U} ) $\,, we first note that since it is a tensor, we have
 \beaa
 | \frac{1}{r} C^{ij}_A   \derm_{\underline{L}}  \derm_{Z_{ij} }  (  \Lie_{Z^{K}}  \Phi_{U} ) | \les  | \frac{1}{r} C^{ij}_A   \derm_{\underline{L}}  \derm_{Z_{ij} }  (  \Lie_{Z^{K}}  \Phi ) | \; .
 \eeaa

 Thus, we can compute, in our fixed system of coordinates, the right hand side of the above inequality in order to make an estimate. First, we compute for $\mu \in \{0, 1, 2, 3 \}$\,, 
 \beaa
   \derm_{Z_{ij} }  (  \Lie_{Z^{K}}  \Phi_{\mu} )      &=&   x_{j} \derm_{ \frac{\pa}{\pa x_i}}  (  \Lie_{Z^{K}}  \Phi_{\mu} )   - x_{i} \derm_{ \frac{\pa}{\pa x_j}}    (  \Lie_{Z^{K}}  \Phi_{\mu} )   \\
  &=&   x_{j} \pa_{i}  (  \Lie_{Z^{K}}  \Phi_{\mu} )   - x_{i} \pa_{j}    (  \Lie_{Z^{I_2}}  \Phi_{\mu} )   =  \pa_{Z_{ij}}  (  \Lie_{Z^{K}}  \Phi_{\mu} )   \\ 
    &=&  \Lie_{Z_{ij}}  (   \Lie_{Z^{K}}  \Phi_{\mu} )  +   \Lie_{Z^{K}}  \Phi ( [Z_{ij},  \frac{\pa}{\pa x_{\mu}} ] ) \\
     &=&  \Lie_{Z_{ij}}  \Lie_{Z^{K}}  \Phi_{\mu}  + \sum_{m} \Lie_{Z^{J_{m_1}} } \Lie_{ [Z_{ij}, Z^{m}] }  \Lie_{Z^{J_{m_2}} } \Phi_{\mu}   +  \Lie_{Z^{K}}  \Phi ( [Z_{ij},  \frac{\pa}{\pa x_{\mu}} ] ) \; ,
  \eeaa
  where by $Z^{m}$, we mean the $m$-th Minkowski vector field in the product $Z^K$, and the rest in the product is $Z^{J_{m_1}}$ (what is before $Z^m$) and $Z^{J_{m_2}}$ (what is after $Z^m$).

  However,  the commutation of two vector fields in $\cal Z$ is a linear combination of vector fields in $\cal Z$, and the commutation of a vector field in $\cal Z$ and of a vector $\pa_\mu$\,, $\mu \in  \{t, x^1, x^2, x^3 \}$, gives a linear combination of vectors of the form $\pa_\mu$. Using that fact, we get that for all $ \mu \in (t, x^1, x^2, x^3)$,
  \beaa
\derm_{Z_{ij} }  (  \Lie_{Z^{K}}  \Phi_{\mu} ) &=&  \sum_{i} a_i \Lie_{Z^{I_i}} \Phi_{\mu} + \sum_{i} b_i \cdot \Lie_{Z^{K}} \Phi_{\mu^i}  \; ,
\eeaa
for some $\mu^i \in  \{t, x^1, x^2, x^3 \}$\,, and $a_i\,,\, b_i$ are constants, and $|I_i| \leq |K| + 1$\,. Thus,
\beaa
 \derm_{\underline{L}}   \derm_{Z_{ij} }  (  \Lie_{Z^{K}}  \Phi_{\mu} ) &=&  \sum_{i} a_i    \derm_{\underline{L}}   ( \Lie_{Z^{I_i}} \Phi_{\mu})  + \sum_{i} b_i \cdot   \derm_{\underline{L}}   ( \Lie_{Z^{K}} \Phi_{\mu^i} ) 
\eeaa
and therefore, for all $\mu \in  \{t, x^1, x^2, x^3 \}$\,,
 \beaa
 |   \derm_{\underline{L}}  \derm_{Z_{ij} }  (  \Lie_{Z^{K}}  \Phi_{\mu} ) | \les \sum_{|M| \leq |K|+1} | \derm_{\underline{L}}   ( \Lie_{Z^{M}} \Phi )  | \; ,
 \eeaa
and as a result
 \beaa
 \notag
 | \frac{1}{r} C^{ij}_A   \derm_{\underline{L}}  \derm_{Z_{ij} }  (  \Lie_{Z^{K}}  \Phi_{U} ) |   \les \sum_{|M| \leq |K|+1}  \frac{1}{r} \cdot  | \derm  ( \Lie_{Z^{M}} \Phi )  | \; .
 \eeaa
 Finally, we obtain for $q > 0$\,,
   \bea
  \notag
  | \derm_{\underline{L}}   \derm_{e_A}   (  \Lie_{Z^{K}}  \Phi_{U} ) | &\les&  | \frac{1}{r^2}  C^{ij}_A \cdot \derm_{Z_{ij} }  (  \Lie_{Z^{K}}  \Phi_{U} )  |  +  | \frac{1}{r} C^{ij}_A   \derm_{\underline{L}}  \derm_{Z_{ij} }  (  \Lie_{Z^{K}}  \Phi_{U} ) |  \\
  &\les&    \sum_{|M| \leq |K|+1} \frac{1}{r}   \cdot | \derm (  \Lie_{Z^{M}}  \Phi )| \; .
  \eea
Since we look in the region $t \geq 1 $ or $r \geq 1$ and we are in the case where $q  > 0$\,, this imposes that $r \geq 1 $ and therefore, in that region, we have $\frac{1}{r} \les \frac{1}{1+r} \; , $ and as we have shown, for $q \geq 0$\,, we have $ \frac{1}{1+r} =  \frac{1}{1 + t + |q|  }  \;.$ Consequently,
   \beaa
  \notag
  | \derm_{\underline{L}}   \derm_{e_A}   (  \Lie_{Z^{K}}  \Phi_{U} ) | &\les&    \sum_{|M| \leq |K|+1}  \frac{1}{1+r}   \cdot | \derm (  \Lie_{Z^{M}}  \Phi )|  \\
  \notag
  &\les&  \frac{1}{(1 + t + |q|)  }  \cdot   \sum_{|M| \leq |K|+1} | \derm (  \Lie_{Z^{M}}  \Phi )| \; . 
  \eeaa
  Thus, in the region $t \geq 1 $ or $r \geq 1$\,, for $q  > 0$\,, we have
     \bea
  \notag
  && \sum_{  |J| + |K| \leq |I|, \; |K| < |I| }   |  ( \Lie_{Z^{J}}   H)_{L e_A }    \cdot  \derm_{\underline{L}}   \derm_{e_A}    (  \Lie_{Z^{K}}  \Phi_{U} )  | \\
    \notag
    &\les&    \sum_{  |J| + |K| \leq |I|, \; |K| < |I| }    \frac{1}{(1 + t + |q|)  }  \cdot   |  ( \Lie_{Z^{J}}   H)_{L e_A }   | \cdot | \sum_{|M| \leq |K|+1} | \derm (  \Lie_{Z^{M}}  \Phi )| \; .\\
  \eea
  
\textbf{ In the case where $q = r-t < 0$}\,:
 
 We then recall that we showed in Lemma \ref{restrictedderivativesintermsofZ}, that we have also a different presentation for $\rpa_{i}$\,, which is more suitable to make estimates in the region $q  < 0$\,, that is $
\rpa_{i} = \frac{  - \frac{x_i}{r} \frac{x^j}{r}   Z_{0j}  +  Z_{0i}    }{  t  }  \; , $ and therefore, we can see $e_A$ as linear combinations of the form 
 \bea
e_A = \frac{1}{t} G_{A}^{j} Z_{0j} +  \frac{1}{t} B_{A}^{i} Z_{0i} \, ,
\eea
where $G_{A}^{j}$ is a linear combination of $\frac{x_i}{r} \frac{x^j}{r}$ and $B_{A}^{i}$ are constants. Based on what we have shown in \eqref{Lbarderivativeofxioverrisinfactzero}, we have then $
\pa_{\underline{L}} G_{A}^{j}  = \pa_{\underline{L}} B_{A}^{j}  =  0 \, . $ Thus, we can write
 \bea
e_A = \frac{1}{t} C_{A}^{j} Z_{0j} \, ,
\eea
with $C_{A}^{j} $ bounded and furthermore, $\pa_{\underline{L}} C_{A}^{j}  = 0 \, .$ We also have $\pa_{\underline{L}} \frac{1}{t} =  \pa_{t}  ( \frac{1}{t}  ) =   - \frac{1}{t^2}   \, .$ Thus,
\beaa
\pa_{\underline{L}} ( \frac{1}{t}   C_{A}^{j}   ) &=&  - \frac{1}{t^2}   C^{j}_A \; .
\eeaa
Since
\beaa 
  \derm_{e_A}   (  \Lie_{Z^{K}}  \Phi_{U} ) &=& \frac{1}{t} C_{A}^{j} \derm_{Z_{0j} }  (  \Lie_{Z^{K}}  \Phi_{U} )   \; ,
\eeaa
therefore,
  \bea\label{Lbareacovarderiderivativeofliederivativephi}
  \notag
  \derm_{\underline{L}}   \derm_{e_A}   (  \Lie_{Z^{K}}  \Phi_{U} ) &=&  \pa_{\underline{L}} ( \frac{1}{t} C_{A}^{j}  ) \cdot \derm_{Z_{0j}}  (  \Lie_{Z^{K}}  \Phi_{U} )  + \frac{1}{t} C_{A}^{j}   \derm_{\underline{L}}  \derm_{Z_{0j} }  (  \Lie_{Z^{K}}  \Phi_{U} ) \\
    \notag
   &=& -  \frac{1}{t^2} C_{A}^{j}  \cdot \derm_{Z_{0j} }  (  \Lie_{Z^{K}}  \Phi_{U} )  + \frac{1}{t} C_{A}^{j}    \derm_{\underline{L}}  \derm_{Z_{0j} }  (  \Lie_{Z^{K}}  \Phi_{U} ) \; . 
  \eea
However, $Z_{0j} =   x_{j} \pa_{t} + t \pa_{j}  $\,, and thus,
\beaa
  \derm_{Z_{0j} }  (  \Lie_{Z^{K}}  \Phi ) = x_{j} \derm_{ \frac{\pa}{\pa t}}  (  \Lie_{Z^{K}}  \Phi )   + t  \derm_{ \frac{\pa}{\pa x_j}}    (  \Lie_{Z^{K}}  \Phi )   \, .
\eeaa
Therefore, on one hand
\beaa
 | \frac{1}{t^2}  C^{j}_A \cdot \derm_{Z_{0j} }  (  \Lie_{Z^{K}}  \Phi_{U} )  | &\leq&   | \frac{x_{j}}{t^2}  C^{j}_A  |  \cdot | \derm_{ \frac{\pa}{\pa t}}  (  \Lie_{Z^{K}}  \Phi )| +    | \frac{t}{t^2}  C^{j}_A  | \cdot | \derm_{ \frac{\pa}{\pa x_j}}    (  \Lie_{Z^{K}}  \Phi )  | \; .
 \eeaa
 Now, we recall that we are in the region $q = r-t < 0$ and therefore in that region, $ \frac{r}{t} \leq 1 \, ,$ and therefore, since also $C^{j}_A$ is bounded, we get 
 \bea
 \notag
 | \frac{1}{t^2}  C^{j}_A \cdot \derm_{Z_{0j} }  (  \Lie_{Z^{K}}  \Phi_{U} )  |  &\leq&    \frac{1}{t}   \cdot | \derm_{ \frac{\pa}{\pa t}}  (  \Lie_{Z^{K}}  \Phi )| +   \frac{1}{t} \cdot | \derm_{ \frac{\pa}{\pa x_j}}    (  \Lie_{Z^{K}}  \Phi )  | \\
 &\leq&    \frac{1}{t}   \cdot | \derm (  \Lie_{Z^{K}}  \Phi )|  \; .
 \eea
On the other hand, for the other term in \eqref{Lbareacovarderiderivativeofliederivativephi}, proceeding as earlier, we obtain
 \beaa
 | \frac{1}{t} C_{A}^{j}    \derm_{\underline{L}}  \derm_{Z_{0j} }  (  \Lie_{Z^{K}}  \Phi_{U} ) |&\les&   | \frac{1}{t} C_{A}^{j}    \derm_{\underline{L}}  \derm_{Z_{0j} }  (  \Lie_{Z^{K}}  \Phi) | \\
&\les& \sum_{|M| \leq |K|+1}  \frac{1}{t} \cdot  | \derm  ( \Lie_{Z^{M}} \Phi )  | \; .
 \eeaa
 Finally, we obtain for $q < 0$\,,
   \bea
  \notag
  |   \derm_{\underline{L}}   \derm_{e_A}   (  \Lie_{Z^{K}}  \Phi_{U} ) |  &\les&    \sum_{|M| \leq |K|+1} \frac{1}{t}   \cdot | \derm (  \Lie_{Z^{M}}  \Phi )| \; .
  \eea
Since we look in the region $t \geq 1 $ or $r \geq 1$ and we are in the case where $q  < 0$\,, this imposes that $ \frac{1}{t} \les  \frac{1}{1+t} \les  \frac{1}{1 + t + |q|  } \; ,$ and hence
   \bea
  \notag
  | \derm_{\underline{L}}   \derm_{e_A}   (  \Lie_{Z^{K}}  \Phi_{U} ) |  &\les&  \frac{1}{(1 + t + |q|)  }  \cdot   \sum_{|M| \leq |K|+1} | \derm (  \Lie_{Z^{M}}  \Phi )| \; . 
  \eea
  Thus, in the region $t \geq 1 $ or $r \geq 1$\,, for $q  < 0$\,, we have
     \bea
  \notag
  && \sum_{  |J| + |K| \leq |I|, \; |K| < |I| }   |  ( \Lie_{Z^{J}}   H)_{L e_A }    \cdot  \derm_{\underline{L}}   \derm_{e_A}    (  \Lie_{Z^{K}}  \Phi_{U} )  | \\
    \notag
    &\les&    \sum_{  |J| + |K| \leq |I|, \; |K| < |I| }    \frac{1}{(1 + t + |q|)  }  \cdot   |  ( \Lie_{Z^{J}}   H)_{L e_A }   | \cdot | \sum_{|M| \leq |K|+1} | \derm (  \Lie_{Z^{M}}  \Phi )| \; .
  \eea
Finally, putting all together and noting that the calculations are also true for a two-tensor, we obtain the result.
 \end{proof}

\begin{lemma}\label{betterdecayfortangentialderivatives}
We have the following well-known inequality for all $t \geq 0$ and for all $q \in \R$\,,
\beaa
(1 + t + |q| ) \cdot |\rderm \Lie_{Z^I}  \Phi | \les \sum_{|J| \leq |I| + 1} | \Lie_{Z^J}  \Phi | \, .
\eeaa
\end{lemma}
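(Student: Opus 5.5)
The plan is to reduce the statement to the case $|I|=0$, that is to $(1+t+|q|)\,|\rderm \Phi| \les \sum_{|J|\leq 1} |\Lie_{Z^J}\Phi|$. Since $\Lie_{Z^I}\Phi$ is again a tensor, the general case follows by applying the $|I|=0$ bound to $\Lie_{Z^I}\Phi$ and noting that $\Lie_{Z}\Lie_{Z^I} = \Lie_{Z^{J}}$ with $|J| = |I|+1$. Because $|\rderm \Psi|$ involves only $\derm_U \Psi_{\a}$ for $U\in{\cal T}$ and Cartesian components, and $\derm$ is flat with vanishing Christoffel symbols in the fixed coordinates, we have $\derm_U \Psi_\a = \pa_U(\Psi_\a)$. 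So it suffices to express $\pa_L$ and $\rpa_i$ (which span the $e_A$ directions, since $e_A$ is a bounded combination of the $\rpa_i$) in terms of the vector fields of ${\cal Z}$ with good coefficients.

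The key identities are as follows. First,
\[
L = \pa_t + \pa_r = \frac{1}{t+r}\Big(S + \frac{x^i}{r} Z_{0i}\Big),
\]
since $S = t\pa_t + r\pa_r$ and $\frac{x^i}{r}Z_{0i} = r\pa_t + t\pa_r$. Second, Lemma \ref{restrictedderivativesintermsofZ} gives
\[
\rpa_i = \frac{x^j}{r^2} Z_{ij} \quad\text{and}\quad \rpa_i = \frac{1}{t}\Big(Z_{0i} - \frac{x_i x^j}{r^2} Z_{0j}\Big).
\]
I would use the first form of $\rpa_i$ when $r \geq t$ and the second when $t\geq r$, so the coefficient is $O(1/\max(t,r)) = O(1/(t+r))$. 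Since $1+t+|q| \les 1+t+r$, this gives $(1+t+|q|)$ times the tangential derivative in the region $t+r\geq 1$. In the compact region $t+r \leq 1$ we have $1+t+|q|\les 1$, and we simply bound $|\rderm\Psi| \les |\derm \Psi| \les \sum_\mu |\pa_\mu \Psi|$ using the translations $\pa_\mu \in {\cal Z}$.

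The remaining step is to convert the directional derivatives $\pa_Z(\Psi_\a)$ into Lie derivatives. For $Z \in {\cal Z}$,
\[
\pa_Z(\Psi_\a) = (\Lie_Z \Psi)_\a - \Psi([\pa_\a, Z]) \cdot (\text{sign}),
\]
and $[\pa_\a, Z]$ is a constant-coefficient combination of the $\pa_\mu$, as already used in the proof of Lemma \ref{commutationformaulamoreprecisetoconservegpodcomponentsstructure}. Hence $|\pa_Z \Psi_\a| \les |\Lie_Z\Psi| + |\Psi|$, and both terms are included in $\sum_{|J|\leq 1}|\Lie_{Z^J}\Psi|$. The coefficients $x^i/r$ and $x_ix^j/r^2$ are bounded, so combining these pieces gives the claim.

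The only delicate point is the choice of representation near the light cone and at small $r$, where the $1/r$ and $1/t$ singularities must be avoided. This is handled by the case split $r\geq t$ versus $t \geq r$ together with the restriction $t+r\geq 1$. The same argument applies verbatim to tensors of any order.
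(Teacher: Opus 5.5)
Your argument is correct. The identity $L=\frac{1}{t+r}\big(S+\frac{x^i}{r}Z_{0i}\big)$ and the two representations of $\rpa_i$ from Lemma \ref{restrictedderivativesintermsofZ} (chosen according to whether $r\geq t$ or $t\geq r$), combined with $Z(\Psi_\alpha)=(\Lie_Z\Psi)_\alpha+\Psi([Z,\pa_\alpha])$, where $[Z,\pa_\alpha]$ is a constant combination of the $\pa_\mu$, and with the translations on the compact region $t+r\leq 1$, give exactly the stated bound. The paper gives no proof of its own and calls the inequality well known; your argument is the standard one it relies on, and it uses the same ingredients elsewhere: the $q>0$/$q<0$ split of the representations of $\rpa_i$ and the commutator conversion to Lie derivatives, in the proof of Lemma \ref{commutationformaulamoreprecisetoconservegpodcomponentsstructure} and in the final commutator estimate.
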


Now, we are ready to prove our decoupled estimate on the commutator term.
\begin{lemma}
For any $V \in \cal T$, we have the decoupled commutator estimate in \eqref{Theseperatecommutatortermestimateforthetangentialcomponents}.  
  \end{lemma}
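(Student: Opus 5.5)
The plan is to start from Lemma \ref{commutationformaulamoreprecisetoconservegpodcomponentsstructure}, which already gives, for $V\in{\cal T}$, the first term $\sum_{|K|<|I|}|g^{\la\mu}\derm_\la\derm_\mu \Lie_{Z^K}\Phi_V|$ together with the $(\Lie_{Z^J}H)_{LL}$ terms in exactly the required form. The $(\Lie_{Z^J}H)_{Le_A}$ term also already carries the good factor $\frac{1}{1+t+|q|}$. The index constraints $|J|+|K|\le|I|$, $|K|<|I|$, $|M|\le|K|+1$ will be relabelled as $|K'|=|M|\le|I|$ and $|J|+(|K'|-1)_+\le|I|$. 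It will then be enough to bound $|(\Lie_{Z^J}H)_{LL}|\le|\Lie_{Z^J}H|$ in the first $LL$ term, and $|(\Lie_{Z^J}H)_{Le_A}|\le|\Lie_{Z^J}H|$. What remains is to handle the three terms that still contain second derivatives of $\Lie_{Z^K}\Phi_V$ without an explicit decay factor: the $(\Lie_{Z^J}H)_{L\underline{L}}$ term with $\derm_L\derm_{\underline L}$, the $m^{\mu\b}(\Lie_{Z^J}H)_{\underline L\b}\derm_L\derm_\mu$ term, and the $m^{\mu\b}(\Lie_{Z^J}H)_{e_A\b}\derm_{e_A}\derm_\mu$ term.

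The key observation is that in each of these three terms the outer derivative is tangential. For $\derm_L\derm_{\underline L}$ this holds because of the identity $\derm_{\underline L}\derm_L=\derm_L\derm_{\underline L}$ proved in the previous lemma, and for the other two it holds directly. I will therefore apply Lemma \ref{betterdecayfortangentialderivatives} to the tensor $\Psi=\derm(\Lie_{Z^K}\Phi)$, using that the outer derivative lies in ${\cal T}$. Since Lie derivatives along $Z\in{\cal Z}$ commute with $\derm$, this gives
\[
|\rderm\,\derm(\Lie_{Z^K}\Phi)|\les\frac{1}{1+t+|q|}\sum_{|M|\le|K|+1}|\derm(\Lie_{Z^M}\Phi)|.
\]
Each of these terms is then bounded by $\frac{1}{1+t+|q|}\,|\Lie_{Z^J}H|\sum_{|M|\le|K|+1}|\derm\Lie_{Z^M}\Phi|$. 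Here the component $\Phi_V$ is dominated by the full tensor $|\derm\Lie_{Z^M}\Phi|$; this is harmless because these terms carry the good factor.

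For the case $V\in{\cal U}$, I will use the same argument. The only change is in the $\derm_{\underline L}\derm_{\underline L}$ term. There one can no longer use \eqref{estimateforgradientoftensorestaimetedbyLiederivativesoftensorwithrefinedcomponentsforthebadfactor} with $V\in{\cal T}$, so I will use Lemma \ref{decayrateforfullderivativeintermofZ}, as in the earlier proof. This produces the bad factor $\frac{1}{1+|q|}$ with the sum over $V'\in{\cal U}$.

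The main obstacle I expect is justifying Lemma \ref{betterdecayfortangentialderivatives} for the two-tensor $\derm\Lie_{Z^K}\Phi$ rather than for a scalar-like field. Converting $\Lie_Z\derm\Lie_{Z^K}\Phi$ back to $\derm\Lie_{Z^{M}}\Phi$ produces extra terms of the form $\derm\Lie_{Z^K}\Phi$ contracted with $[Z,\pa_\mu]$, and these must be shown to be linear combinations of the $\pa_\nu$ with constant coefficients. I will handle this exactly as in the computation of $\derm_{Z_{ij}}(\Lie_{Z^K}\Phi_\mu)$ in the previous proof. Finally, I will note that the region $t<1$, $r<1$ is trivial, since there $1+t+|q|\sim1\sim1+|q|$ and the crude bound suffices.
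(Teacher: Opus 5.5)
Your proposal is correct and follows the paper's proof essentially step for step. Like you, the paper starts from Lemma~\ref{commutationformaulamoreprecisetoconservegpodcomponentsstructure}, bounds the $(\Lie_{Z^J}H)_{L\underline{L}}$, $(\Lie_{Z^J}H)_{\underline{L}\beta}$ and $(\Lie_{Z^J}H)_{e_A\beta}$ terms by $|\Lie_{Z^J}H|\cdot|\rderm\,\derm(\Lie_{Z^K}\Phi)|$, and then applies Lemma~\ref{betterdecayfortangentialderivatives} componentwise in the fixed coordinates, using that $[Z,\partial_\mu]$ is a constant-coefficient combination of the $\partial_\nu$ and that $\Lie_Z$ commutes with $\derm$; the paper leaves the relabelling to $|J|+(|K|-1)_+\le|I|$ implicit, and your $V\in{\cal U}$ and near-origin remarks are harmless additions. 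One small correction to how you described the obstacle: commuting $\Lie_Z$ with $\derm$ is exact for $Z\in{\cal Z}$, and the extra $[Z,\partial_\mu]$ terms arise instead when converting $Z$ acting on the Cartesian components of $\derm\Lie_{Z^K}\Phi$ into $(\Lie_Z\derm\Lie_{Z^K}\Phi)_{\mu\nu}$, which is precisely the step the paper's computation handles.
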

 \begin{proof}
In the result of Lemma \ref{commutationformaulamoreprecisetoconservegpodcomponentsstructure}, we estimate
  \beaa
  &&    | ( \Lie_{Z^{J}}   H)_{L  \underline{L}} |   \cdot | \derm_{L}    \derm_{\underline{L}}  (  \Lie_{Z^{K}}  \Phi_{U} )  | \\
\notag
 && +       | m^{\mu \b}   ( \Lie_{Z^{J}}   H)_{\underline{L}\b}  \cdot    \derm_{L}   \derm_{\mu}     (  \Lie_{Z^{K}}  \Phi_{U} )  | + |   m^{\mu \b}   ( \Lie_{Z^{J}}   H)_{e_A \b}   \cdot   \derm_{e_A}   \derm_{\mu}    (  \Lie_{Z^{K}}  \Phi_{U} )   | \Big) \\
 \notag
 &\les &   |  \Lie_{Z^{J}}   H |   \cdot | \derm_{L}    \derm  (  \Lie_{Z^{K}}  \Phi )  |  + |  \Lie_{Z^{J}}   H |  \cdot  |  \derm_{L}   \derm     (  \Lie_{Z^{K}}  \Phi )  | \\
 \notag
 && + | \Lie_{Z^{J}}   H |     \cdot  | \derm_{e_A}   \derm    (  \Lie_{Z^{K}}  \Phi )   | \Big) \\
 &\les & | \Lie_{Z^{J}}   H |     \cdot  | \rderm   \derm    (  \Lie_{Z^{K}}  \Phi )   | \,.
  \eeaa
  In our fixed system of coordinates, for a given $\mu\,, \nu \in \{t, x^1, x^2, x^3 \}$\,, we can look at each component and estimate, as we have shown in Lemma \ref{betterdecayfortangentialderivatives}, that
  \beaa
   | \rderm   \derm_{\mu}    (  \Lie_{Z^{K}}  \Phi_{\nu} )   | &\leq&    | \rpa   \derm_{\mu}    (  \Lie_{Z^{K}}  \Phi_{\nu} )   | \\
   &\leq& \frac{1}{(1 + t + |q| )}  \cdot \sum_{|I| = 1} |Z^I  \derm_{\mu}    (  \Lie_{Z^{K}}  \Phi_{\nu} )   | \, .
\eeaa
Using the fact that commutation of a Minkowski vector field $Z$ and a coordinate vector field $\frac{\pa}{\pa x_{\mu}}$\,, gives a linear combination of coordinates vector fields, we get
\beaa
 \sum_{|I| = 1}  |Z^I  \derm_{\mu}    (  \Lie_{Z^{K}}  \Phi_{\nu} )   | \les \sum_{|I| \leq 1}  |\Lie_{Z^I}   \derm (  \Lie_{Z^{K}}  \Phi )   |  \; .
\eeaa
Summing over all indices $\mu\,, \nu \in \{t, x^1, x^2, x^3 \}$\,, we obtain
\beaa
  | \rderm   \derm    (  \Lie_{Z^{K}}  \Phi )   |  \les \frac{1}{(1 + t + |q| )}  \cdot  \sum_{|I| \leq 1}  |\Lie_{Z^I}   \derm (  \Lie_{Z^{K}}  \Phi )   |  \;.
\eeaa
As a result, using the fact that the Lie derivative $\Lie_{Z^{I}}$ commutes with $\derm$, we get
 \beaa
  &&    | ( \Lie_{Z^{J}}   H)_{L  \underline{L}} |   \cdot | \derm_{L}    \derm_{\underline{L}}  (  \Lie_{Z^{K}}  \Phi_{U} )  | \\
\notag
 && +       | m^{\mu \b}   ( \Lie_{Z^{J}}   H)_{\underline{L}\b}  \cdot    \derm_{L}   \derm_{\mu}     (  \Lie_{Z^{K}}  \Phi_{U} )  | + |   m^{\mu \b}   ( \Lie_{Z^{J}}   H)_{e_A \b}   \cdot   \derm_{e_A}   \derm_{\mu}    (  \Lie_{Z^{K}}  \Phi_{U} )   | \Big) \\
 \notag
 &\les & | \Lie_{Z^{J}}   H |     \cdot  \frac{1}{(1 + t + |q| )}  \cdot   \sum_{|M| \leq |K| +1}  |   \derm (  \Lie_{Z^{M}}  \Phi )   | \; .
 \eeaa
Consequently, for any $V \in {\cal T}$,
 \beaa
\notag
&&| \Lie_{Z^I}  ( g^{\la\mu} \derm_{\la}   \derm_{\mu}     \Phi_{V} ) - g^{\la\mu}    \derm_{\la}   \derm_{\mu}  (  \Lie_{Z^I} \Phi_{V}  ) |  \\
\notag
&\les&  \sum_{|K| < |I| }  | g^{\la\mu} \cdot \derm_{\la}   \derm_{\mu} (  \Lie_{Z^{K}}  \Phi_{V} ) | \\
\notag
&& +  \sum_{|J| + |K| \leq |I|, \; |K| < |I| }  \Big(   |   ( \Lie_{Z^{J}}   H)_{L  L} |   \cdot    \frac{1}{(1+t+|q|)} \cdot  \sum_{|M| \leq |K|+1}  | \derm ( \Lie_{Z^M}  \Phi )  |  \\
   \notag
 && +    |   ( \Lie_{Z^{J}}   H)_{L  L}   | \cdot   \frac{1}{(1+|q|)} \cdot   \sum_{|M| \leq |K|+1}    \sum_{ V^\prime \in {\cal T} } | \derm   ( \Lie_{Z^M}  \Phi _{V^\prime} ) |   \\
\notag
    &&  +    \frac{1}{(1 + t + |q|)  }  \cdot   |  ( \Lie_{Z^{J}}   H)_{L e_A }   | \cdot | \sum_{|M| \leq |K|+1} | \derm (  \Lie_{Z^{M}}  \Phi )| \\
 && +  | \Lie_{Z^{J}}   H |     \cdot  \frac{1}{(1 + t + |q| )}  \cdot \sum_{|M| \leq |K| +1}  |   \derm (  \Lie_{Z^{M}}  \Phi )   |  \\
 &\les&  \sum_{|K| < |I| }  | g^{\la\mu} \cdot \derm_{\la}   \derm_{\mu} (  \Lie_{Z^{K}}  \Phi_{V} ) | \\
\notag
&& +  \sum_{|J| + |K| \leq |I|, \; |K| < |I| }  \Big(   |   \Lie_{Z^{J}}   H  |   \cdot    \frac{1}{(1+t+|q|)} \cdot  \sum_{|M| \leq |K|+1}  | \derm ( \Lie_{Z^M}  \Phi )  |  \\
   \notag
 && +    |   ( \Lie_{Z^{J}}   H)_{L  L}   | \cdot   \frac{1}{(1+|q|)} \cdot   \sum_{|M| \leq |K|+1}    \sum_{ V^\prime \in {\cal T} } | \derm   ( \Lie_{Z^M}  \Phi _{V^\prime} ) | \Big)  \; .
  \eeaa
Thus, we get the result.
\end{proof}

\textbf{Conflict of Interest Statement:} The work in this manuscript was funded by the Beijing Institute of Mathematical Sciences and Applications (BIMSA) in China.

\textbf{Data Availability Statement:} No datasets were generated or analysed during the current study in this manuscript.

\end{document}